\DeclareMathOperator{\lc}{H}
\newcommand{\hght}{\operatorname{ht}}
\newcommand{\Spec}{\operatorname{Spec}}
\newcommand{\length}{\ell}
\newcommand{\red}[1]{#1_{\operatorname {red}}}
\newcommand{\eh}{\operatorname{e}}
\newcommand{\ehk}{\eh_{\text{HK}}}
\newcommand{\mf}{\mathfrak}
\DeclareMathOperator{\m}{\mathfrak{m}}
\DeclareMathOperator{\n}{\mathfrak{n}}
\DeclareMathOperator{\ord}{ord}
\DeclareMathOperator{\im}{im}
\DeclareMathOperator{\numg}{\mu}
\DeclareMathOperator{\rank}{rank}
\DeclareMathOperator{\inm}{in_{\m}}
\DeclareMathOperator{\inte}{in_<}
\newcommand{\gr}{\operatorname{gr}}
\begin{document}

\title{Asymptotic Lech's inequality}

\author{Craig Huneke}
\address{Department of Mathematics, University of Virginia, Charlottesville, VA 22904 USA}
\email{clh4xd@virginia.edu}

\author{Linquan Ma}
\address{Department of Mathematics, Purdue University, West Lafayette, IN 47907 USA}
\email{ma326@purdue.edu}

\author{Pham Hung Quy}
\address{Department of Mathematics, FPT University, Hanoi, Vietnam}
\email{quyph@fe.edu.vn}

\author{Ilya Smirnov}
\address{Department of Mathematics, Stockholm University, S-10691, Stockholm, Sweden}
\email{smirnov@math.su.se}

\begin{abstract}
We explore the classical Lech's inequality relating the Hilbert--Samuel multiplicity and colength of an $\m$-primary ideal in a Noetherian local ring $(R,\m)$. We prove optimal versions of Lech's inequality for sufficiently deep ideals in characteristic $p>0$, and we conjecture that they hold in all characteristics.

Our main technical result shows that if $(R,\m)$ has characteristic $p>0$ and $\widehat{R}$ is reduced, equidimensional, and has an isolated singularity, then for any sufficiently deep $\m$-primary ideal $I$, the colength and Hilbert--Kunz multiplicity of $I$ are sufficiently close to each other. More precisely, for all $\varepsilon>0$, there exists $N\gg0$ such that for any $I\subseteq R$ with $\length(R/I)>N$, we have $(1-\varepsilon)\length(R/I)\leq \ehk(I)\leq(1+\varepsilon)\length(R/I)$.
\end{abstract}

 \dedicatory{Dedicated to Professor Bernd Ulrich on the occasion of his 65th birthday}

\maketitle

\setcounter{tocdepth}{1}
\tableofcontents

\section{Introduction and preliminaries}
In \cite{LechMultiplicity}, Lech proved a simple inequality relating the Hilbert--Samuel multiplicity (Definition~\ref{def HS}) and the colength of an ideal:
\begin{theorem}[Lech's inequality]
\label{thm.Lech}
Let $(R,\m)$ be a Noetherian local ring of dimension $d$ and let $I$ be any $\m$-primary ideal of $R$. Then we have $$\eh(I)\leq d!\eh(R)\length(R/I),$$ where $\eh(I)$ denotes the Hilbert--Samuel multiplicity of $I$ and $\eh(R)=\eh(\m)$.
\end{theorem}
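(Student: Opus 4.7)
\emph{Overall strategy.} My plan is to compute $\eh(I)$ via the standard Hilbert--Samuel limit $\eh(I) = \lim_{n\to\infty}(d!/n^d)\length(R/I^n)$ and to bound the colengths $\length(R/I^n)$ asymptotically by $\eh(R)\length(R/I)n^d$, from which the factor of $d!$ will emerge from the asymptotics of binomial coefficients.

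\emph{Reductions and minimal reduction.} First I would replace $R$ by the faithfully flat extension $R(x)=R[x]_{\m R[x]}$, which preserves $\eh(I)$, $\eh(R)$, and $\length(R/I)$ but forces the residue field infinite; one may also complete. With infinite residue field, I pick a minimal reduction $J=(y_1,\dots,y_d)\subseteq I$ of $I$, so that $\eh(I)=\eh(J)$, the sequence $(y_1,\dots,y_d)$ is a system of parameters, and $I^{n+r}=J^nI^r$ for some reduction number $r\geq 0$ and all $n\geq 0$.

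\emph{Colength bound.} Since $J$ has $d$ generators, for each $k\geq 0$ the map
$\bigoplus_{|\alpha|=k} I^r/JI^r \to J^kI^r/J^{k+1}I^r$, $(m_\alpha)\mapsto \sum_\alpha y^\alpha m_\alpha$,
is surjective, and the left-hand side has $\binom{k+d-1}{d-1}$ summands; hence $\length(J^kI^r/J^{k+1}I^r)\leq \binom{k+d-1}{d-1}\length(I^r/JI^r)$. Summing over $k=0,\dots,n-1$ and using $I^{n+r}=J^nI^r$ gives
\[
\length(R/I^{n+r})\leq \length(R/I^r)+\binom{n+d-1}{d}\length(I^r/JI^r).
\]
Dividing by $n^d/d!$ and letting $n\to\infty$ then yields $\eh(I)\leq \length(I^r/JI^r)$.

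\emph{Main obstacle.} The remaining and hardest task is to show $\length(I^r/JI^r)\leq d!\,\eh(R)\length(R/I)$. Here I would exploit that $\m^t\subseteq I$ for some $t$ (because $I$ is $\m$-primary) to compare $I^r/JI^r$ with modules built from powers of $\m$ modulo $J$, then apply Lech's own length inequality for the system of parameters $(y_1,\dots,y_d)$ inside $\m$, together with the Hilbert--Samuel asymptotic $\length(R/\m^n)\sim \eh(R)n^d/d!$. The combinatorial $d!$ factor in the final inequality arises precisely from these asymptotics; extracting this constant cleanly---carefully comparing the $I$-adic and $\m$-adic filtrations while tracking the growth of binomial counts---is the main technical challenge and the heart of Lech's original argument.
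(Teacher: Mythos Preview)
Your first step is correct: the surjectivity argument on the associated graded does give $\eh(I)\leq \length(I^r/JI^r)$. The gap is in the second step. The inequality you set as your target, $\length(I^r/JI^r)\leq d!\,\eh(R)\,\length(R/I)$, is \emph{false} in general, so the outline cannot be completed along this route. Take $R=k[[x,y]]$, $d=2$, $\eh(R)=1$, $I=\m^2$, and the minimal reduction $J=(x^2,y^2)$. One checks $J\m^2=\m^4$, so the reduction number is $r=1$, and
\[
\length(I^r/JI^r)=\length(\m^2/\m^4)=\length(R/\m^4)-\length(R/\m^2)=10-3=7,
\]
while $d!\,\eh(R)\,\length(R/I)=2\cdot 1\cdot 3=6$. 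Thus $7>6$ and your proposed intermediate inequality fails, even though Lech's inequality $\eh(\m^2)=4\leq 6$ is of course true. The intermediate bound $\eh(I)\leq \length(I^r/JI^r)$ is simply too lossy to recover Lech's constant afterwards.

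For comparison, the paper's argument avoids minimal reductions entirely and proceeds by a double induction on $d$ and on $\length(R/I)$. The base case $d=1$ is handled by passing to the normalization (a semilocal PID). For $d\geq 2$ one picks a superficial element $x\in\m$, sets $R'=R/xR$ (so $\eh(R')=\eh(R)$), and uses the elementary inequalities $\length(R/I)=\length(R/(I:x))+\length(R'/IR')$ and $\eh(I)\leq \eh(I:x)+d\,\eh(IR')$ from \autoref{lem.induction inequalities} to get
\[
\frac{\eh(I)}{\length(R/I)}\leq \max\left\{\frac{\eh(I:x)}{\length(R/(I:x))},\ \frac{d\,\eh(IR')}{\length(R'/IR')}\right\},
\]
and both terms on the right are $\leq d!\,\eh(R)$ by the two induction hypotheses. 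This sidesteps any delicate filtration comparison and gives the $d!$ factor for free from the recursion $d\cdot(d-1)!=d!$.
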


Suppose $R$ is a regular local ring and $I=J^t$, a power of an $\m$-primary ideal $J$, then it is easy to see that when $t\to\infty$, both sides of Lech's inequality tend to $t^d\eh (J)$. In particular, \autoref{thm.Lech} is asymptotically sharp when $R$ is regular. However, Lech also observed in the proof \cite[page 74, after (4.1)]{LechMultiplicity} that the inequality in \autoref{thm.Lech} is almost never sharp: when $d > 1$, we always have a strict inequality.

In \cite{MumfordStability}, Mumford conceptualized this and defined the $0$-th flat multiplicity of a Noetherian local ring $R$ of dimension $d$ to be
\[
\eh_0(R) =  \sup_{\substack{\sqrt{I}=\m \\}} \left\{\frac{\eh(I)}{d!\length(R/I)} \right\},
\]
and the $k$-th flat multiplicity of $R$ to be $\eh_k(R)=\eh_0(R[[t_1,\dots,t_k]])$. He proved that $\eh_0(R)\geq \eh_1(R)\geq \cdots \geq \eh_k(R)\geq \cdots$, and defined a Noetherian local ring to be {\it semi-stable} if $\eh_1(R) = 1$. However, computing $\eh_k(R)$ (or even determining whether $R$ is semi-stable) turns out to be difficult (see \cite[Section 3]{MumfordStability}). In this paper, we study an {\it asymptotic} version of Mumford's $\eh_0(R)$. Our main purpose is to obtain sharp versions of Lech's inequality for {\it sufficiently deep} ideals beyond the regular case. Our main conjecture is the following:

\begin{conjecture}[Asymptotic Lech's inequality]
\label{conj.asymptoticLech}
Let $(R,\m)$ be a Noetherian local ring of dimension $d\geq 1$.
\begin{enumerate}
\item If $\widehat{R}$ has an isolated singularity, i.e.,  $\widehat{R}_P$ is regular for all $P\in\Spec\widehat{R}-\{\m\}$, then
\[
\lim_{N\to\infty} \sup_{\substack{\sqrt{I}=\m \\ \length(R/I)> N}} \left\{\frac{\eh(I)}{d!\length(R/I)} \right\}=1.
\]
As the above limit is always $\geq 1$ by considering $I=\m^n$ and letting $n\to\infty$, the statement is equivalent to saying that for every $\varepsilon>0$, there exists $N\gg0$ such that for any $\m$-primary ideal $I$ with $\length(R/I)>N$,
$$\eh(I)\leq d!(1+\varepsilon)\length(R/I).$$
\item We have $\eh(\red{\widehat{R}}) > 1$ if and only if
\[
\lim_{N\to\infty} \sup_{\substack{\sqrt{I}=\m \\ \length(R/I)> N}} \left\{\frac{\eh(I)}{d!\length(R/I)} \right\}<\eh(R).
\]
In other words,  $\eh(\red{\widehat{R}}) > 1$ if and only if there exists $\varepsilon>0$ and $N\gg0$ such that for any $\m$-primary ideal $I$ with $\length(R/I)>N$,
$$\eh(I)\leq d!(\eh(R)-\varepsilon)\length(R/I).$$
\end{enumerate}
\end{conjecture}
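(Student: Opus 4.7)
The plan is to work in characteristic $p>0$ and use the paper's main technical result (stated in the abstract), which gives $\ehk(I) \in [(1-\varepsilon)\length(R/I),(1+\varepsilon)\length(R/I)]$ for all sufficiently deep $\m$-primary ideals $I$ whenever $\widehat R$ is reduced, equidimensional, and has isolated singularity. Since $\eh(I)$, $\length(R/I)$, and the class of $\m$-primary ideals are unchanged by $\m$-adic completion, I would first reduce to $R=\widehat R$. For part~(1), whose hypothesis is only ``isolated singularity,'' the additional conditions (reduced, equidimensional) should be recovered by an associativity-formula argument that sums over minimal primes of top dimension, each of which is a domain inheriting isolated singularity from $\widehat R$.

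For part~(1), once the main technical result is in hand the argument is essentially immediate via the universal inequality $\eh(I)\leq d!\,\ehk(I)$. To see the latter, note that $I^{[q]}$ is generated by the $q$-th powers of generators of $I$, which lie in $I^q$; hence $I^{[q]}\subseteq I^q$, so $\length(R/I^q)\leq\length(R/I^{[q]})$, and dividing by $q^d$ and letting $q\to\infty$ gives $\eh(I)/d!\leq\ehk(I)$. Combining with $\ehk(I)\leq(1+\varepsilon)\length(R/I)$ for deep $I$ yields the desired bound $\eh(I)\leq d!(1+\varepsilon)\length(R/I)$.

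For part~(2), both directions are more delicate. For $\eh(\red{\widehat R})=1 \Rightarrow \sup=\eh(R)$, Nagata's theorem (under suitable unmixedness) forces $\red{\widehat R}$ regular, and one constructs explicit deep ideals of the form $I_n=\sqrt{0}\widehat R+J^n$, where $J$ lifts a regular system of parameters of $\red{\widehat R}$, and checks directly that $\eh(I_n)/(d!\length(\widehat R/I_n))\to\eh(R)$; the contribution of the nilpotents to the multiplicity pushes the numerator up without a corresponding bump in the colength.

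For the converse $\eh(\red{\widehat R})>1 \Rightarrow \sup<\eh(R)$, my approach is to decompose via the associativity formula $\eh(I)=\sum_P\length(\widehat R_P)\,\eh(I\widehat R/P)$ over minimal primes $P$ of top dimension. Each $\widehat R/P$ is a local domain with isolated singularity, so part~(1) yields $\eh(I\widehat R/P)\leq d!(1+\varepsilon)\length(\widehat R/(I+P))$. The main obstacle I anticipate is converting $\sum_P\length(\widehat R_P)\length(\widehat R/(I+P))$ into a uniform bound of the form $(\eh(R)-\delta)\length(\widehat R/I)$ using the strict inequality $\eh(\red{\widehat R})>1$; this will likely require a careful analysis of the interaction between $I$ and the filtration by the minimal primes and the nilradical, and the error terms will need to be controlled uniformly over all deep $I$.
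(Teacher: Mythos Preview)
Your approach to part~(a) matches the paper's: reduce to $\widehat R$, apply Theorem~B to a reduced equidimensional quotient, and use $\eh(I)\le d!\,\ehk(I)$. The paper passes to $S=\widehat R/(P_1\cap\cdots\cap P_n)$ (the intersection of the top-dimensional minimal primes) rather than to each $\widehat R/P_i$ separately; this is cleaner because $\ehk(I)=\ehk(IS)$ and $\length(S/IS)\le\length(R/I)$ hold directly, so no recombination of per-prime estimates is needed.

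For the ``if'' direction of~(b), your appeal to Nagata's theorem has a small gap: regularity of $\red{\widehat R}$ follows from $\eh(\red{\widehat R})=1$ only under unmixedness, which is not assumed. The paper avoids this by taking $I_n=\m^n+\sqrt{0}$: since $\m^n\subseteq I_n\subseteq\overline{\m^n}$ one has $\eh(I_n)=n^d\eh(R)$, while $\length(\widehat R/I_n)=\length(\red{\widehat R}/\m^n\red{\widehat R})\sim n^d/d!$ using only $\eh(\red{\widehat R})=1$, with no regularity needed.

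The serious gap is in the ``only if'' direction of~(b). You assert that each $\widehat R/P$ is an isolated singularity so that part~(a) applies, but part~(b) carries \emph{no} isolated-singularity hypothesis, and in general $\widehat R/P$ need not be one. The paper's proof of this direction does not reduce to Theorem~B at all. It inducts on dimension via a general element $x\in\m$, using the elementary inequality
\[
\frac{\eh(I)}{\length(R/I)}\ \le\ \max\Bigl\{\frac{\eh(I{:}x)}{\length(R/(I{:}x))},\ \frac{d\,\eh(IR')}{\length(R'/IR')}\Bigr\},\qquad R'=R/xR,
\]
together with Flenner's Bertini-type result to preserve $(R_0)$ when cutting by $x$. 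The base case $d=2$ combines the one-dimensional case (proved directly via normalization, in all characteristics) with Hanes' inequality $\eh(I)\le d!\bigl(1-\mu(I)^{-1/(d-1)}\bigr)^{d-1}\ehk(I)$ applied to ideals of bounded $\mu$; characteristic zero is then handled by reduction mod~$p$. This is what yields~(b) in full equal characteristic with no singularity hypothesis, whereas your route would at best recover the isolated-singularity subcase already obtained as an immediate corollary of part~(a). Even in that subcase, the ``main obstacle'' you flag---turning $\sum_P\length(\widehat R_P)\,\length(\widehat R/(I+P))$ into $(\eh(R)-\delta)\length(\widehat R/I)$---is genuine and you have not indicated a mechanism to overcome it.
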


Compared with Lech's inequality, \autoref{conj.asymptoticLech} expects that for ideals having large colength
the constant $\eh(R)$ in \autoref{thm.Lech} can be usually replaced by a much smaller number under various assumptions on the ring $R$. Our main result is the following:

\begin{theoremA*}[\autoref{cor.mainIsolatedSing}, \autoref{prop.if direction}, \autoref{cor.Mainequalchar}, \autoref{prop.MaindimOne}] \hspace{2em}
\begin{enumerate}
\item[(1)] \autoref{conj.asymptoticLech} (a) holds in characteristic $p>0$ when $R/\m$ is perfect.
\item[(2)] \autoref{conj.asymptoticLech} (b) holds in equal characteristic.
\end{enumerate}
\end{theoremA*}

We also show that in \autoref{conj.asymptoticLech} part (a), the assumption that $\widehat{R}$ has an isolated singularity is necessary in general: we construct a counter-example for non-isolated singularities in \autoref{example}. As for \autoref{conj.asymptoticLech} part (b), note that in characteristic $p>0$, we have
\begin{equation}
\label{eqn:dag}
\frac{\eh(I)}{d!\length(R/I)}\leq \frac{\ehk(I)}{\length(R/I)}\leq \ehk(R) \tag{\dag}
\end{equation}
by \cite[Lemma~4.2]{WatanabeYoshida}. So  \autoref{conj.asymptoticLech} (b) holds whenever $\ehk (R)<\eh(R)$ (this happens often, for example when $R$ is $F$-rational but not regular). But our Theorem A above proves it even in the case $\ehk (R)= \eh(R)$. Nonetheless, we propose a stronger conjecture in characteristic $p>0$:

\begin{conjecture}
\label{conj.asymptoticLecheHK}
Let $(R,\m)$ be a Noetherian local ring of characteristic $p>0$ and dimension $d \geq 1$. If
$\eh(\red{\widehat{R}}) > 1$, then
\[
\lim_{N\to\infty} \sup_{\substack{\sqrt{I}=\m \\ \length(R/I)> N}} \left\{\frac{\eh(I)}{d!\length(R/I)} \right\}<\ehk(R).
\]
\end{conjecture}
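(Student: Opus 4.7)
The plan is to exploit the chain of inequalities~\eqref{eqn:dag},
$$
\frac{\eh(I)}{d!\length(R/I)} \;\leq\; \frac{\ehk(I)}{\length(R/I)} \;\leq\; \ehk(R),
$$
by sharpening the middle term for sufficiently deep $I$. The main technical result recalled in the abstract asserts that, when $\widehat{R}$ is reduced, equidimensional, and has an isolated singularity, one has $\ehk(I)/\length(R/I)\to 1$ as $\length(R/I)\to\infty$. Combined with~\eqref{eqn:dag}, this immediately yields $\eh(I)/(d!\length(R/I))\leq 1+\varepsilon$ for every $\varepsilon>0$ and every sufficiently deep $I$.

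Under these additional hypotheses on $\widehat{R}$, it then suffices to know that $\ehk(R)>1$ under the standing assumption $\eh(\red{\widehat{R}})>1$. This is the content of the theorem of Watanabe--Yoshida (and its extension to the formally unmixed setting): $\ehk(R)=1$ forces $R$ to be regular. Since the hypothesis $\eh(\red{\widehat{R}})>1$ precludes $\red{\widehat{R}}$ (hence $\widehat{R}$, which is reduced in this case) from being regular, one obtains $\ehk(R)=1+\delta$ for some $\delta>0$. Choosing $\varepsilon<\delta$ then delivers the strict inequality $\eh(I)/(d!\length(R/I))<\ehk(R)$ uniformly in deep $I$, proving the conjecture in the isolated-singularity case.

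The main obstacle is to remove the isolated-singularity, reduced, and equidimensional assumptions on $\widehat{R}$. In this generality \eqref{eqn:dag} alone gives only the non-strict bound $\ehk(R)$, and the main technical result no longer applies. The natural approach is to imitate the strategy used to prove Theorem~A(2), which handles Conjecture~\ref{conj.asymptoticLech}(b) in equal characteristic without any isolated singularity hypothesis, but systematically replacing Hilbert--Samuel multiplicity by Hilbert--Kunz multiplicity throughout the argument. The heart of the matter will be a Hilbert--Kunz analogue of the asymptotic Lech estimate along the non-isolated singular locus: for deep $I$, the ratio $\ehk(I)/\length(R/I)$ should be controllable by $\ehk$-invariants attached to lower-dimensional strata, each strictly less than $\ehk(R)$. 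This step is where I expect genuinely new input to be required, since the associativity formulas and upper-semicontinuity results that make $\eh$ tractable in the proof of Theorem~A(2) are considerably more delicate for $\ehk$.
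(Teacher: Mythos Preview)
This statement is a \emph{conjecture} in the paper, not a theorem; the paper only establishes it under additional hypotheses. Your first two paragraphs correctly reproduce the paper's partial result, \autoref{cor.maineHK}: under the isolated-singularity (plus perfect residue field) hypothesis, \autoref{cor.mainIsolatedSing} gives $\eh(I)/(d!\length(R/I))\leq 1+\varepsilon$ for deep $I$, and then one only needs $\ehk(R)>1$. The paper's argument for the latter is slightly more elementary than invoking Watanabe--Yoshida: if $\ehk(R)=1$ then $\widehat{R}/P$ is regular for the unique $d$-dimensional minimal prime $P$, forcing $\eh(\red{\widehat{R}})=1$. But your route via Watanabe--Yoshida is fine in the reduced equidimensional setting you are assuming.

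Where your proposal runs into a genuine obstruction is the third paragraph. Your plan for the general case is to rerun the proof of \autoref{thm.Mainequalchar} and \autoref{cor.Mainequalchar} with $\ehk$ in place of $\eh$. The paper explicitly addresses this in its final Remark: the adaptation \emph{does} go through when $\dim R=2$, but the inductive step via \autoref{lem.induction inequalities} collapses in higher dimension because there is in general no element $x$ with $\ehk(R)=\ehk(R/xR)$. The authors give the concrete example $R_d=K[[x_1,\dots,x_d]]/(x_1^2+\cdots+x_d^2)$, where for a general linear form $x$ one has $R_d/xR_d\cong R_{d-1}$ yet $\ehk(R_3)>\ehk(R_4)$. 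So the ``associativity and semicontinuity'' input you anticipate needing is not just delicate for $\ehk$---it is false in the form the induction requires, and the conjecture remains open beyond the isolated-singularity and two-dimensional cases.
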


Note that by \autoref{eqn:dag}, the limit in \autoref{conj.asymptoticLecheHK} is always $\leq \ehk(R)$. Our Theorem A implies that \autoref{conj.asymptoticLecheHK} is true at least for isolated singularities with perfect residue fields (see \autoref{cor.maineHK}). Besides being natural, our conjectures are largely inspired from recent work of Blum and Liu \cite[Lemma 13]{BlumLiu}:
\begin{lemma}[Blum--Liu]
\label{lem.Blum-Liu}
Let $(R,\m)$ be a Noetherian local ring of dimension $d$ such that $\widehat{R}$ is a domain and $R/\m$ is algebraically closed. For any positive numbers $\delta,\varepsilon \in (0,1)$, there exists $n_0$ such that for any $n\geq n_0$ and any ideal $\m^n\subseteq I\subseteq \m^{\lceil\delta n\rceil}$ we have $$\eh(I)\leq d!(1+\varepsilon)\length(R/I).$$
\end{lemma}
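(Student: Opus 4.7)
My plan is to approach this by comparing the given sandwiched ideal $I$ to the unperturbed ideal $\m^n$, using a minimal reduction argument made possible by the algebraically closed residue field. After reducing to the case $R=\widehat R$ (which preserves both $\eh(I)$ and $\length(R/I)$), $R$ is a complete local domain with algebraically closed residue field, so any $\m$-primary ideal admits minimal reductions. For each $I$ in the range, I would pick a minimal reduction $J=(x_1,\dots,x_d)\subseteq I$, arranged so that $x_i\in \m^{\lceil\delta n\rceil}$, giving $\eh(I)=\eh(J)$.

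The target is asymptotically achieved on $I=\m^n$: one has $\eh(\m^n)=n^d\eh(R)$ and $\length(R/\m^n)=\tfrac{\eh(R)}{d!}n^d+O(n^{d-1})$, so the ratio $\eh(\m^n)/\length(R/\m^n)$ already converges to $d!$ as $n\to\infty$. The crude bounds $\eh(I)\leq\eh(\m^n)$ and $\length(R/I)\geq\length(R/\m^{\lceil\delta n\rceil})$ give only the weak estimate $\eh(I)/\length(R/I)\leq d!/\delta^d+o(1)$, which is far from enough. The essential content is that the multiplicity drop $\eh(\m^n)-\eh(I)$ and the colength drop $\length(R/\m^n)-\length(R/I)=\length(I/\m^n)$ are tightly correlated for sandwiched $I$.

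To quantify this, I would invoke a Teissier--Rees--Sharp Minkowski--type refinement together with mixed multiplicity estimates: for a suitably generic minimal reduction $J$, one has a bound of the shape
\[
\eh(\m^n)-\eh(I)\;\gtrsim\; d\,\eh(R)\,n^{d-1}\cdot h(I/\m^n),
\]
where $h(I/\m^n)$ is an ``effective size'' of $I/\m^n$ inside $\m^{\lceil\delta n\rceil}/\m^n$ that dominates $\length(I/\m^n)$ up to lower-order terms in $n$. Combined with $\length(R/I)=\length(R/\m^n)-\length(I/\m^n)$ and the leading asymptotic $\length(R/\m^n)\sim \eh(R)n^d/d!$, this yields $\eh(I)\leq d!(1+\varepsilon)\length(R/I)$ once $n_0$ is taken large enough in terms of $\delta$ and $\varepsilon$. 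The algebraically closed hypothesis enters crucially in a Bertini-style step in $\gr_\m(R)$ to guarantee the existence of minimal reductions whose initial forms are in sufficiently general position.

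The main obstacle is establishing this multiplicity-drop estimate \emph{uniformly} over all $I$ with $\m^n\subseteq I\subseteq \m^{\lceil\delta n\rceil}$: the choice of generic minimal reduction depends on $I$, but the constants in the drop estimate must not. A secondary difficulty is that $R$ need not be Cohen--Macaulay, so $\eh(J)$ may differ from $\length(R/J)$; the discrepancy is controlled by the Cohen--Macaulay defect and contributes only $O(n^{d-1})$, which is absorbed once the higher-order error terms in the Hilbert--Samuel polynomial of $R$ become negligible compared to the leading $n^d$ term.
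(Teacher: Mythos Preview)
Your outline has a genuine gap at its central step. You correctly observe that the crude bounds $\eh(I)\leq\eh(\m^n)$ and $\length(R/I)\geq\length(R/\m^{\lceil\delta n\rceil})$ lose a factor $\delta^{-d}$, and that what is needed is a \emph{correlated} drop estimate of the shape $\eh(\m^n)-\eh(I)\gtrsim d!\,\length(I/\m^n)$ uniformly over all sandwiched $I$. But once one rearranges using $\length(R/I)=\length(R/\m^n)-\length(I/\m^n)$ and $\eh(\m^n)\sim d!\,\length(R/\m^n)$, that drop estimate is at least as strong as the lemma itself; you have restated the target, not reduced it. Your appeal to a ``Teissier--Rees--Sharp Minkowski-type refinement together with mixed multiplicity estimates'' does not supply a mechanism: the Minkowski inequality $\eh(IJ)^{1/d}\leq\eh(I)^{1/d}+\eh(J)^{1/d}$ gives no lower bound on $\eh(\m^n)-\eh(I)$ in terms of $\length(I/\m^n)$, and mixed multiplicities $e_i(I\mid\m)$ do not obviously control this difference either. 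The quantity $h(I/\m^n)$ is never defined, the minimal reduction $J$ you set up is never used, and the algebraically closed hypothesis is not what makes minimal reductions exist (an infinite residue field suffices for that).

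For contrast, the paper's characteristic-$p$ argument does establish exactly such a drop estimate, but for Hilbert--Kunz rather than Hilbert--Samuel multiplicity, and via a concrete device you are missing. Watanabe's lemma (which is where the algebraically closed residue field is actually used) produces a chain of integrally closed ideals from $\overline{\m^n}$ to $\overline{I}$ with colength-one steps; on each step the fact that $R^{1/p^e}$ has generic rank $p^{ed}$ forces $\length(\overline{I}R^{1/p^e}/\overline{\m^n}R^{1/p^e})\geq p^{ed}\length(\overline{I}/\overline{\m^n})$, and dividing by $p^{ed}$ and letting $e\to\infty$ yields $\ehk(\overline{I})\leq\ehk(\overline{\m^n})-\length(\overline{I}/\overline{\m^n})$. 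One then finishes with $\eh(I)/d!\leq\ehk(\overline{I})$ and $\ehk(\m^n)/\length(R/\overline{\m^n})\to 1$. The original Blum--Liu proof takes an entirely different route through local Okounkov bodies, converting the problem into a convex-geometric volume comparison. Your sketch provides no substitute for either mechanism.
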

This Lech-type inequality, although a bit technical, is a crucial ingredient in their proof of semicontinuity of normalized volume function on the valuation space centered at the origin, see \cite{BlumLiu}. The main difference between \autoref{lem.Blum-Liu} and \autoref{conj.asymptoticLech} part (a) is that in the latter we no longer require the additional parameter $\delta$ and we expect the inequality for any $I$ that has sufficiently large colength. This is clearly much stronger than the conclusion of \autoref{lem.Blum-Liu}, but assumes a strong hypothesis of isolated singularity. The proof of \autoref{lem.Blum-Liu} given in \cite{BlumLiu} is rather involved and makes use of local Okounkov bodies. We will give two elementary approaches in characteristic $p>0$ using Hilbert--Kunz multiplicities in section 2, which in fact generalizes their result (in characteristic $p>0$). More importantly, one of the approaches will eventually lead to the main technical theorem of this article, from which part of Theorem A will follow.

\begin{theoremB*}[\autoref{thm.mainHK}]
Let $(R,\m)$ be a Noetherian local ring of characteristic $p>0$ and dimension $d\geq 1$ with $K=R/\m$ perfect. Suppose $\widehat{R}$ is reduced, equidimensional, and has an isolated singularity. Then for every $\varepsilon>0$, there exists $N\gg0$ such that for any $\m$-primary ideal $I$ with $\length(R/I)>N$,
$$(1-\varepsilon)\length(R/I)\leq \ehk (I)\leq (1+\varepsilon)\length(R/I).$$
\end{theoremB*}

This will be proven in section 4. Note that if $R$ is regular, then $\ehk(I)=\length(R/I)$ for any $\m$-primary ideal $I$ so the theorem is trivial in this case. Basically, Theorem B indicates that, for isolated singularities, the relation between colength and Hilbert--Kunz multiplicities for sufficiently deep $\m$-primary ideals is somehow similar to the case of regular rings.

\subsection{Hilbert--Samuel and Hilbert--Kunz multiplicity}

\begin{definition}\label{def HS}
Let $(R, \mf m)$ be a Noetherian local ring of dimension $d$ and $I$ be an $\mf m$-primary ideal.
The Hilbert--Samuel multiplicity of $I$ is defined as
\[
\eh(I) = \lim_{n\to\infty}\frac{d!\length(R/I^n)}{n^d}.
\]
\end{definition}

A closely related concept is integral closure. An element $x\in R$ is integral over an ideal $I$ if
it satisfies an equation of the form $x^n + a_{1}x^{n-1} + \cdots a_{n-1}x+ a_n=0$
where $a_k \in I^k$. The set of all elements $x$ integral over $I$ is an ideal and is denoted $\overline{I}$.
The Hilbert--Samuel multiplicity is an invariant of the integral closure, i.e., $\eh(I) = \eh(\overline{I})$.
Even more generally, if $\widehat{R}$ is reduced, then
\[
\eh(I) =\lim_{n\to\infty}\frac{d!\length(R/\overline{I^n})}{n^d},
\]
because there exists $k$ such that $\overline{I^{nk}} = (\overline{I^k})^n$ for any $n \geq 1$ by
\cite[Corollary~4.13]{Ratliff2}.

\begin{definition}
Let $(R, \mf m)$ be a Noetherian local ring of characteristic $p> 0$ and dimension $d$, and let $I$ be an $\mf m$-primary ideal.
The Hilbert--Kunz multiplicity of $I$ is defined as
\[
\ehk(I) = \lim_{e\to\infty}\frac{\length(R/I^{[p^e]})}{p^{ed}},
\]
where $I^{[p^e]}$ is the ideal generated by $p^e$-th powers of elements of $I$.
\end{definition}

It is a nontrivial result of Monsky \cite{Monsky} that the above limit exists. We point out that it follows from work of Kunz \cite{Kunz2} that, if $R$ is reduced and $F$-finite (i.e., the Frobenius map $R\xrightarrow{F}R$ is a finite map), then $$\ehk(I)=\lim_{e\to\infty}\frac{\length(R^{1/p^e}/IR^{1/p^e})}{p^{e\gamma}}$$ where $\gamma=d+\log_p[K^{1/p}:K]$ for $K=R/\m$. In general, the two multiplicities are related by the inequalities (\cite[(2.4)]{WatanabeYoshida}):
$$\frac{\eh(I)}{d!}\leq \ehk(I)\leq \eh(I).$$
It follows that $\eh(I)=\ehk(I)$ when $d\leq 1$. On the other hand, as long as $d\geq 2$, the Hilbert--Kunz multiplicity of $I$ can be close to either ${\eh(I)}/{d!}$ or $\eh(I)$. For example, if $I=(x_1,\dots,x_d)$ is generated by a system of parameters of $R$, then we always have $\ehk(I)=\eh(I)$ (\cite[Theorem~2]{Lech2}), while if $I=J^n$ is a power of an $\m$-primary ideal $J$, we have \cite[Theorem 1.1]{WatanabeYoshidaHKdimTwo}:
\begin{equation}
\label{eqn:asympHKHS}
\lim_{n\to\infty}\frac{\ehk(J^n)}{\eh(J^n)/d!}=\lim_{n\to\infty}\frac{\ehk(J^n)}{\length(R/J^n)}=1.
\end{equation}
We will strengthen this result by showing that, if $\widehat{R}$ is reduced and equidimensional, then $\lim\limits_{n\to\infty}\frac{\ehk(J^n)}{\length(R/J^n)}$ converges to $1$ uniformly, independent of $J$ (see \autoref{rem.UniformeHKI^n}).

\vspace{1em}

\noindent\textbf{Acknowledgement}: The second author thanks Kazuma Shimomoto and Bernd Ulrich for comments on the Cohen--Gabber theorem, and he thanks Yuchen Liu for valuable discussions. The second author was supported in part by NSF Grant DMS \#1901672, NSF FRG Grant DMS \#1952366, and was supported in part by NSF Grant DMS $\#1836867/1600198$ when preparing this article. The third author is partially supported by a fund of Vietnam National Foundation for Science and Technology Development (NAFOSTED) under grant number 101.04-2017.10. The authors would also like to thank the referee for her/his comments that leads to improvement of this article.

\section{Blum--Liu's lemma in characteristic $p>0$}
In this section we give two simple alternative proofs of \autoref{lem.Blum-Liu} in characteristic $p>0$. These results are not used directly in the proof of the main theorems. However, the methods inspired the strategy of the proof of the main result in section 4, and we believe they have independent interest.

We begin by recalling a lemma which is due to Watanabe \cite[Theorem 2.1]{WatanabeChain} when $R$ is complete normal with algebraically closed residue field. But the conclusion holds for any complete local domain with algebraically closed residue field, which is implicit in the proof of \cite[Lemma 3.1]{KMQSY}. For the sake of completeness we give the argument.

\begin{lemma}[Watanabe]
\label{lem.Watanabe}
Let $(R,\m)$ be a Noetherian complete local domain with $K=R/\m$ algebraically closed. Then for $I\subsetneq J$ two integrally closed $\m$-primary ideals, we can find a chain $$I=I_0\subseteq I_1\subseteq\cdots\subseteq I_n=J$$ such that $\length(I_j/I_{j-1})=1$ for every $j$ and all $I_j$ are integrally closed.
\end{lemma}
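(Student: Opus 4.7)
The plan is to proceed by induction on $\length(J/I)$: it suffices to produce, given any pair of integrally closed $\m$-primary ideals $I \subsetneq J$, a single integrally closed ideal $I_1$ with $I \subsetneq I_1 \subseteq J$ and $\length(I_1/I) = 1$; iterating the same construction with $I_1 \subseteq J$ then yields the full chain.

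To produce such an $I_1$, I would consider the $K$-vector space $V = \soc(J/I)$, which is nonzero because $J/I$ is a nonzero Artinian module. For any nonzero $\bar v \in V$, lifted to $v \in J$ with $\m v \subseteq I$ and $v \notin I$, the ideal $I + (v)$ satisfies $\length((I + (v))/I) = 1$, and its integral closure $\overline{I + (v)}$ is an integrally closed ideal that strictly contains $I$ and is contained in $\overline{J} = J$. The task therefore reduces to finding a $\bar v$ for which $I + (v)$ is \emph{already} integrally closed; then $I_1 := I + (v)$ does the job. Note that one cannot bypass this step by simply choosing an arbitrary socle element: the naive calculation showing $(I+(v))^2 \subseteq I$ only tells us $\overline{I+(v)} \subseteq \sqrt{I} = \m$, not that $\overline{I+(v)} = I+(v)$.

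The main obstacle is this existence step, and it is where the hypothesis that $R$ is a complete local domain and $K$ is algebraically closed enters essentially. My plan is to exploit the Rees valuation theory of a normalized blowup of $\m^N$ for $N$ large: integral closure of ideals between $I$ and $J$ is controlled by finitely many divisorial valuations $\nu_1,\ldots,\nu_r$ whose centers are geometrically integral $K$-varieties in the exceptional fibre, and the condition that $I + (v)$ equal its integral closure translates into a numerical condition on the values $\nu_i(v)$. Over an algebraically closed residue field, the parameter space $\mathbb{P}(V)$ has enough $K$-points to guarantee that at least one lies in the locus where this condition holds. Simple examples (such as $I = \m^2$ in the cusp $K[[t^2, t^3]]$, where only the direction $\bar v = \bar{t^3}$ works while $\bar v = \bar{t^2}$ fails) show that this ``good'' locus can be a proper closed subvariety of $\mathbb{P}(V)$ rather than an open dense subset, so the existence argument must be a careful valuation-theoretic one and not a naive genericity claim; this is the technically delicate heart of the proof.
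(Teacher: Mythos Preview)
Your inductive reduction to producing a single integrally closed $I_1$ with $I\subsetneq I_1\subseteq J$ and $\length(I_1/I)=1$ matches the paper exactly. After that, however, the paper does something quite different and much shorter: it \emph{does not} try to build $I_1$ directly inside $R$ via a valuation-theoretic analysis of the socle. Instead it passes to the normalization $S$ of $R$. Since $R$ is a complete local domain, $S$ is local, and since $K$ is algebraically closed, $S/\n=K$, so lengths over $R$ and $S$ agree. Watanabe's original theorem (for complete normal local domains with algebraically closed residue field) already gives a chain $\overline{IS}=J_0\subseteq\cdots\subseteq J_m=\overline{JS}$ of integrally closed ideals in $S$ with $\length(J_{i+1}/J_i)=1$. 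One then contracts: $J_0\cap R=I$, $J_m\cap R=J$ (using that $I,J$ are integrally closed in $R$), and taking $t$ maximal with $J_t\cap R=I$, the ideal $I_1:=J_{t+1}\cap R$ is integrally closed in $R$, lies between $I$ and $J$, and has $\length(I_1/I)=1$ because $I_1/I\hookrightarrow J_{t+1}/J_t$.

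Compared to this, your proposal attempts to redo (in the possibly non-normal $R$) essentially the kind of argument Watanabe uses in the normal case. That may be possible, but as written it is not a proof: the existence of a socle direction $\bar v$ with $I+(v)$ integrally closed is precisely the nontrivial content, and you leave it as a plan (``exploit Rees valuations of a normalized blowup'') without executing it. Your own cusp example shows the good locus in $\mathbb{P}(V)$ can be a proper closed subset, so no genericity shortcut is available, and you have not given the actual valuation-theoretic argument that guarantees a $K$-point in that locus. The paper's normalization trick sidesteps this entirely by invoking the known normal case as a black box and then contracting; this is both shorter and avoids re-proving the hard step.
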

\begin{proof}
By induction on $\length(J/I)$, it is enough to find an integrally closed ideal $I'$ such that $I\subseteq I'\subseteq J$ and $\length(I'/I)=1$. Let $R\to S$ be the normalization of $R$. Since $R$ is a complete local domain, $S$ is local by \cite[Proposition 4.8.2]{HunekeSwanson}, and so $S=(S,\n)$ is a normal local domain with $R/\m=S/\n=K$ since $K$ is algebraically closed. In particular, computing length over $R$ and $S$ are the same. By \cite[Theorem 2.1]{WatanabeChain}, there exists a chain $$\overline{IS}=J_0\subseteq J_1\subseteq J_2\subseteq\cdots \subseteq J_m=\overline{JS}$$ such that each $J_i$ is integrally closed in $S$ and $\length(J_{i+1}/J_i)=1$ for every $i$.

Since $I$ is integrally closed in $R$ and $S$ is integral over $R$, by \cite[Proposition 1.6.1]{HunekeSwanson} we know
$$J_0\cap R=\overline{{IS}}\cap R=\bar{I}=I,$$ and similarly we know that $J_m\cap R=J$.
Let $t=\max\{i \mid J_i\cap R=I\}$.  Obviously $0\leq t<m$. Set $I'=J_{t+1}\cap R$.  Now we have $I\subseteq I'\subseteq J$ and $I'$ is integrally closed in $R$ (one can use \cite[Proposition 1.6.1]{HunekeSwanson} again). Moreover, $\length(I'/I)>0$ by our choice of $t$ while $I'/I\hookrightarrow J_{t+1}/J_t$ shows that $\length(I'/I)\leq \length(J_{t+1}/J_t)=1$. Thus, we have $\length(I'/I)=1$.
\end{proof}

\begin{proof}[Proof of \autoref{lem.Blum-Liu} in characteristic $p>0$]
We can pass to the completion of $R$ to assume $R$ is a complete local domain with $R/\m$ algebraically closed. We fix $\varepsilon'\leq(\delta^d\varepsilon)/2$. We next pick $n_0$ such that for any $n\geq n_0$,
\begin{enumerate}
  \item $\ehk(\m^n)\leq (1+\varepsilon')\length(R/\overline{\m^n})$.
  \item $\length(R/\overline{\m^n})/{\length(R/\overline{\m^{\lceil\delta n\rceil}})}\leq \frac{2}{\delta^d}$.
\end{enumerate}
We note that such $n_0$ exists: we have $\lim\limits_{n\to\infty}\frac{\ehk(\m^n)}{\length(R/\m^n)}=1$ by \autoref{eqn:asympHKHS} and $\lim\limits_{n\to\infty}\frac{\length(R/\m^n)}{\length(R/\overline{\m^n})}=1$, which guarantees (a), and we can achieve (b) by using that
\[\lim_{n\to\infty}\length(R/\overline{\m^n})/{\length(R/\overline{\m^{\lceil\delta n\rceil}})}=\lim_{n\to\infty}\frac{d!\eh(R)n^d}{d!\eh(R)(\lceil\delta n\rceil)^d}=\frac{1}{\delta^d}.
\]
Since $R$ is a complete local domain with algebraically closed residue field, $R^{1/p^e}$ is a finite $R$-module of rank $p^{ed}$. Given an ideal $I$ such that $\m^n\subseteq I\subseteq \m^{\lceil\delta n\rceil}$, we apply \cite[Corollary 2.2]{CDHZ} and \autoref{lem.Watanabe} to verify that
$$\length(R^{1/p^e}/\overline{\m^n}R^{1/p^e})-\length(R^{1/p^e}/\overline{I}R^{1/p^e})=\length\left(\frac{\overline{I}R^{1/p^e}}{\overline{\m^n}R^{1/p^e}}\right)\geq \length(\overline{I}/\overline{\m^n})\cdot p^{ed}.$$
After dividing by $p^{ed}$ and letting $e\to \infty$, we obtain
$$\ehk(\overline{I})\leq \ehk(\overline{\m^n})-\length\left (\overline{I}/\overline{\m^n}\right)\leq (1+\varepsilon')\length(R/\overline{\m^n})-\length\left (\overline{I}/\overline{\m^n}\right)=\varepsilon'\length(R/\overline{\m^n})+\length(R/\overline{I}),$$ where we used (a) for the inequality in the middle. Finally, we divide the above equation by $\length(R/\overline{I})$ to bound
\begin{eqnarray*}
  \frac{\eh(I)}{d!\length(R/I)} &\leq& \frac{\eh(\overline{I})}{d!\length(R/\overline{I})}\leq \frac{\ehk(\overline{I})}{\length(R/\overline{I})} \\
   &\leq & 1+\varepsilon'\frac{\length(R/\overline{\m^n})}{\length(R/\overline{I})} \leq 1+\varepsilon'\frac{\length(R/\overline{\m^n})}{\length(R/\overline{\m^{\lceil\delta n\rceil}})}\\
   &\leq& 1+\varepsilon'\cdot\frac{2}{\delta^d}\leq 1+\varepsilon,
\end{eqnarray*}
where we used (b) and the choice of $\varepsilon'$. This completes the proof.
\end{proof}

Our next proposition is a generalization of \autoref{lem.Blum-Liu} in characteristic $p>0$: besides establishing an upper bound on the Hilbert--Kunz multiplicity in terms of colength, we also obtain a lower bound, and we can relax the assumptions on $R$. We will later see what are the optimal assumptions in \autoref{rem.OptimalBlumLiu}. More importantly, the proof strategy will be adapted and extended to prove Theorem B (after we established some uniformity results in section 3).

\begin{proposition}
\label{prop.BL-2ndproof}
Let $(R,\m, k)$ be a Noetherian local ring of dimension $d$ of characteristic $p>0$ such that $\widehat{R}$ is reduced and equidimensional. Then for any positive numbers $\delta,\varepsilon \in (0,1)$, there exists $n_0$ such that for any $n\geq n_0$ and any ideal $\m^n\subseteq I\subseteq \m^{\lceil\delta n\rceil}$, we have $$(1-\varepsilon)\length(R/I)\leq \ehk(I)\leq (1+\varepsilon)\length(R/I).$$ As a consequence, $(1-\varepsilon)\length(R/I)\leq \eh(I)\leq d!(1+\varepsilon)\length(R/I)$.
\end{proposition}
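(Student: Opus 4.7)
The plan is to adapt the first proof of \autoref{lem.Blum-Liu} given above, with two extensions: (i) replacing the ``complete domain, algebraically closed residue field'' hypothesis by ``complete, reduced, equidimensional'' with arbitrary residue field, and (ii) upgrading the one-sided upper bound on $\eh(I)/d!$ to a two-sided bound on $\ehk(I)$ itself. I will first reduce to the case where $R$ is complete, reduced, equidimensional, and has algebraically closed residue field, via completion followed by a faithfully flat gonflement $(R,\m)\to(R',\m')$ with $\m R' = \m'$ and $R'/\m'$ an algebraic closure of $R/\m$; both operations preserve $\length(R/I)$, $\ehk(I)$, reducedness, and equidimensionality.

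With these reductions, I will apply \eqref{eqn:asympHKHS} to $J=\m$ (legitimate since $\widehat R$ is reduced equidimensional) together with $\length(R/\m^n)/\length(R/\overline{\m^n}) \to 1$, fixing a small $\varepsilon' > 0$ (to be chosen in terms of $\varepsilon$ and $\delta$) and $n_0$ such that for every $n \geq n_0$,
\[
(1-\varepsilon')\length(R/\overline{\m^n}) \leq \ehk(\m^n) \leq (1+\varepsilon')\length(R/\overline{\m^n})
\quad\text{and}\quad
\length(R/\overline{\m^n}) \leq \tfrac{2}{\delta^d}\length(R/\overline{\m^{\lceil\delta n\rceil}}).
\]

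The core chain argument then proceeds as in the first proof, but via an extension of \autoref{lem.Watanabe} to the reduced equidimensional case, which I obtain by applying that lemma componentwise to the normalization of $R$ (a finite product of normal local domains with algebraically closed residue fields) and pulling the chain back. This produces a chain of integrally closed $\m$-primary ideals $\overline{\m^n} = J_0 \subsetneq J_1 \subsetneq \cdots \subsetneq J_m = \overline{I}$ with $\length(J_{i+1}/J_i) = 1$. A two-sided form of \cite[Corollary~2.2]{CDHZ} then gives $\length(J_{i+1}R^{1/p^e}/J_i R^{1/p^e}) \approx p^{ed}$ for every step; summing, dividing by $p^{ed}$, and letting $e \to \infty$ yields $\ehk(\overline{I}) = \ehk(\overline{\m^n}) - \length(\overline{I}/\overline{\m^n})$, from which the baseline estimates give $|\ehk(\overline{I}) - \length(R/\overline{I})| \leq (2\varepsilon'/\delta^d)\length(R/\overline{I})$.

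The main obstacle will be transferring this two-sided control from $\overline{I}$ to $I$ itself, since $\ehk$ is not invariant under integral closure and $\length(\overline{I}/I)$ need not be small relative to $\length(R/I)$ under only the deepness hypothesis $I \subseteq \m^{\lceil\delta n\rceil}$. I expect this is handled by running the analogous chain argument directly between $\m^n R^{1/p^e}$ and $IR^{1/p^e}$ inside $R^{1/p^e}$ (which is again reduced equidimensional with algebraically closed residue field), applying \cite[Corollary~2.2]{CDHZ} in a form that yields $\length_R(IR^{1/p^e}/\m^n R^{1/p^e}) \approx p^{ed}\length(I/\m^n)$ and bypasses integral closure of $I$ altogether; combined with the baseline estimates and the second inequality above, this gives $(1-\varepsilon)\length(R/I) \leq \ehk(I) \leq (1+\varepsilon)\length(R/I)$ for $\varepsilon'$ chosen sufficiently small. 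The Hilbert--Samuel consequence is immediate from $\eh(I)/d! \leq \ehk(I) \leq \eh(I)$.
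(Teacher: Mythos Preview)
Your plan has a genuine gap: the ``two-sided form of \cite[Corollary~2.2]{CDHZ}'' you invoke does not exist.  That result gives only the lower bound $\length(J_{i+1}R^{1/p^e}/J_iR^{1/p^e})\geq p^{e\gamma}$ for each colength-one step with $J_{i+1}$ integrally closed; the natural upper bound is $\mu(R^{1/p^e})$, and $\mu(R^{1/p^e})/p^{e\gamma}\to\ehk(R)$, not $1$.  Consequently the equality $\ehk(\overline I)=\ehk(\overline{\m^n})-\length(\overline I/\overline{\m^n})$ is simply false when $\ehk(R)>1$.  A one-dimensional counterexample: take $R=k[[t^2,t^5]]$ with $k$ algebraically closed, so $\ehk=\eh$ and $\ehk(R)=2$.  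Both $\m$ and $\overline{\m^2}=(t^4,t^5)$ are integrally closed with $\length(\m/\overline{\m^2})=1$, yet $\ehk(\overline{\m^2})-\ehk(\m)=4-2=2$.  Thus the chain argument yields at best $\ehk(\overline I)\geq\ehk(\overline{\m^n})-\ehk(R)\cdot\length(\overline I/\overline{\m^n})$, and since $\length(\overline I/\overline{\m^n})$ can be of the same order as $\length(R/\overline{\m^n})$ (indeed $\approx(1-\delta^d)/\delta^d$ times $\length(R/\overline I)$), this gives no useful lower bound on $\ehk(\overline I)$.  Your fallback of ``running the chain directly between $\m^n R^{1/p^e}$ and $IR^{1/p^e}$'' has the same defect, and in addition cannot invoke \cite{CDHZ} at all since that result requires the larger ideal at each step to be integrally closed---precisely what you are trying to bypass.  (There is also a secondary issue: a naive gonflement to algebraic closure need not preserve reducedness in characteristic $p$; the paper uses the $\Gamma$-construction instead.)

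The paper's proof is entirely different and does not use the chain/Watanabe/CDHZ circle of ideas at all for this proposition.  After reducing to $R$ complete, $F$-finite, reduced, and equidimensional, it applies the Cohen--Gabber theorem to obtain a module-finite, generically \'etale extension $A\to R$ with $A$ regular, and compares $R^{1/p^e}$ directly with the free $R$-module $R\otimes_A A^{1/p^e}\cong R^{p^{e\gamma}}$ via two short exact sequences whose cokernels are killed by the discriminant $c$.  This yields, for every $\m$-primary $I$ and every $e$, a genuinely two-sided estimate of the form
\[
\Big(1-\tfrac{\length(R/(I,c))}{\length(R/I)}\Big)\length(R/I)\ \leq\ \frac{\length(R^{1/p^e}/IR^{1/p^e})}{p^{e\gamma}}\ \leq\ \Big(1+\tfrac{\mu(R^{1/p^e})}{p^{e\gamma}}\cdot\tfrac{\length(R/(I,c))}{\length(R/I)}\Big)\length(R/I),
\]
and the hypothesis $\m^n\subseteq I\subseteq\m^{\lceil\delta n\rceil}$ is used only to force $\length(R/(I,c))/\length(R/I)\to 0$.
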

\begin{proof}
It is easy to see that, if we can prove the proposition for a faithfully flat extension $R'$ of $R$ with $R'/\m R'$ a field, then the same conclusion holds for $R$ since both colength and multiplicities do not change when we pass to $R'$. With this in mind, we can first complete $R$ to assume $R$ is a complete local ring that is reduced and equidimensional. We next apply Hochster--Huneke's $\Gamma$-construction to reduce to the case $R$ is $F$-finite. For any cofinite subset $\Gamma$ of a $p$-base of the residue field $k$, we have a faithfully flat and purely inseparable extension $R\to R^\Gamma$ (see \cite[(6.11)]{HochsterHuneke2} for details), and when $\Gamma$ is chosen to be sufficiently small, $R^\Gamma$ is still reduced by \cite[Lemma 6.13]{HochsterHuneke2} and equidimensional (since it is purely inseparable and $R$ is equidimensional). Finally, we replace $R^\Gamma$ by $\widehat{R^\Gamma}$ to assume $R$ is complete, $F$-finite, reduced and equidimensional (note that $R^\Gamma$ is excellent so completion preserves these properties). Under these conditions, the total quotient ring of $R$ is a product of fields $\prod_{i=1}^nK_i$, where $K_i=\text{Frac}(R/P_i)$ for each minimal prime $P_i$ of $R$.
Since $R$ is equidimensional, by \cite[Proposition~2.3]{Kunz2}
for  $\log_p[K_i^{1/p}:K_i]=d+\log_p[k^{1/p}:k]$ for all $i$. We call this constant $\gamma$.

By the Cohen--Gabber theorem (see \cite[Expos\'{e} IV, Th\'{e}or\`{e}me 2.1.1]{CohenGabberOriginal} or \cite[Theorem 1.1]{KuranoShimomoto}), there exists a complete regular local ring $A$ with the same residue field as $R$ such that $A\to R$ is module-finite and generically \'{e}tale. Let $c\neq 0$ be the discriminant of the map $A\to R$, then by \cite[Discussion 6.3 and Lemma 6.5]{HochsterHuneke1}, $cR^{1/p^e}\subseteq R[A^{1/p^e}]\cong R\otimes A^{1/p^e}$ for any $e$. We consider the following two short exact sequences:
$$0\to R[A^{1/p^e}]\to R^{1/p^e}\to C_e\to 0$$
$$0\to R^{1/p^e}\xrightarrow{\cdot c}R[A^{1/p^e}]\to D_e\to 0.$$
It follows that $C_e$ and $D_e$ are annihilated by $c$ for any $e$, and $\mu(C_e)\leq \mu(R^{1/p^e})$, $\mu(D_e)\leq \mu(R[A^{1/p^e}])$. Thus we have surjections:
\begin{equation}
\label{eqn:TwoSurj}
(R/cR)^{\mu(R^{1/p^e})}\twoheadrightarrow C_e\to 0, \text{ and } (R/cR)^{\mu(R[A^{1/p^e}])}\twoheadrightarrow D_e\to 0.
\end{equation}
For any $\m$-primary ideal $I$ we tensor the two short exact sequences with $R/I$. Since $A$ is regular local, $R[A^{1/p^e}]\cong R\otimes A^{1/p^e}\cong R^{p^{e\gamma}}$ by \cite{Kunz1}. Thus we have
$$(R/I)^{p^{e\gamma}}\to R^{1/p^e}/IR^{1/p^e}\to C_e/IC_e\to 0$$
$$R^{1/p^e}/IR^{1/p^e}\to(R/I)^{p^{e\gamma}}\to D_e/ID_e\to 0.$$
Computing length, we know that
$$p^{e\gamma}\length(R/I)-\length(D_e/ID_e)\leq \length(R^{1/p^e}/IR^{1/p^e})\leq p^{e\gamma}\length(R/I)+\length(C_e/IC_e),$$
so by (\ref{eqn:TwoSurj})
$$p^{e\gamma}\length(R/I)-p^{e\gamma}\length(R/(I,c))\leq \length(R^{1/p^e}/IR^{1/p^e}) \leq p^{e\gamma}\length(R/I)+\mu(R^{1/p^e})\length(R/(I,c)).$$
Dividing the above by $p^{e\gamma}$, we get that for any $\m$-primary ideal $I$ and every $e$,
\begin{equation}
\label{eqn:InequaAllIe}
\length(R/I)\left(1-\frac{\length(R/(I,c))}{\length(R/I)}\right)\leq \frac{\length(R^{1/p^e}/IR^{1/p^e})}{p^{e\gamma}}\leq \length(R/I)\left(1+\frac{\mu(R^{1/p^e})}{p^{e\gamma}}\cdot\frac{\length(R/(I,c))}{\length(R/I)}\right).
\end{equation}

At this point, we note that if $\m^n\subseteq I\subseteq \m^{\lceil\delta n\rceil}$, then we have
$$\frac{\length(R/(I,c))}{\length(R/I)}\leq \frac{\length(R/(\m^n,c))}{\length(R/\m^{\lceil\delta n\rceil})}.$$
But when $n\to \infty$, we know that
$$\lim_{n\to \infty} \frac{\length(R/(\m^n,c))}{\length(R/\m^{\lceil\delta n\rceil})}=\lim_{n\to \infty}\frac{\eh(R/cR)n^{d-1}/(d-1)!}{\eh(R)\delta^dn^d/d!}=\lim_{n\to \infty}\frac{d\eh(R/cR)}{\eh(R)\delta^d}\cdot \frac{1}{n}=0,$$ since $\frac{d\eh(R/cR)}{\eh(R)\delta^d}$ is a constant that does not depend on $n$. Therefore we know that for every $0<\varepsilon<1$ and every $0<\delta<1$, there exists $n_0$ such that for any $n>n_0$ and any $\m^n\subseteq I\subseteq \m^{\lceil\delta n\rceil}$, we have
$$\frac{\length(R/(I,c))}{\length(R/I)}\leq \frac{\varepsilon}{\ehk(R)}\leq \varepsilon.$$
We plug in the above into (\ref{eqn:InequaAllIe}) and see that for any $\m^n\subseteq I\subseteq \m^{\lceil\delta n\rceil}$ and all $e$, we have
$$\length(R/I)(1-\varepsilon)\leq \frac{\length(R^{1/p^e}/IR^{1/p^e})}{p^{e\gamma}}\leq \length(R/I)\left(1+\frac{\mu(R^{1/p^e})}{p^{e\gamma}}\cdot\frac{\varepsilon}{\ehk(R)}\right).$$
Finally, we take the limit as $e\to\infty$ and use the definition of Hilbert--Kunz multiplicity, we see that
$$(1-\varepsilon)\length(R/I)\leq \ehk(I)\leq (1+\varepsilon)\length(R/I)$$
for any $\m^n\subseteq I\subseteq \m^{\lceil\delta n\rceil}$ as desired. The last conclusion on Hilbert--Samuel multiplicity follows immediately because we have $\ehk(I)\leq \eh(I)\leq d!\ehk(I)$.
\end{proof}

\begin{remark}
\label{rem.UniformeHKI^n}
Let $(R,\m)$ be as in \autoref{prop.BL-2ndproof}. If we apply \autoref{eqn:InequaAllIe} to $I=J^n$ and let $e\to\infty$, we see that $$1-\frac{\length(R/(J^n, c))}{\length(R/J^n)}\leq \frac{\ehk(J^n)}{\length(R/J^n)}\leq 1+\ehk(R)\cdot \frac{\length(R/(J^n, c))}{\length(R/J^n)}.$$
By \autoref{thm.Lech} and \cite[Theorem 2.4]{KMQSY}, we know that there exists a constant $D$ depending only on $R$ and $c$ such that $$\frac{\length(R/(J^n, c))}{\length(R/J^n)}\leq D\cdot \frac{\eh(J^n, R/cR)}{\eh(J^n, R)}=\frac{D}{n}\cdot \frac{\eh(J, R/cR)}{\eh(J, R)}\leq \frac{Dk}{n}$$
where $k$ is the uniform Artin--Rees number for $(c)\subseteq R$, see \cite[Lemma 2.5]{KMQSY}. Therefore as $n\to \infty$, $\frac{\length(R/(J^n, c))}{\length(R/J^n)}\to 0$ uniformly independent of $J$. This shows that, as $n\to\infty$, $\frac{\ehk(J^n)}{\length(R/J^n)}\to 1$ uniformly (independent of the ideal $J$).
\end{remark}

\begin{remark}
If we examine the proof of \autoref{prop.BL-2ndproof} more carefully, the assumption $\m^n\subseteq I\subseteq \m^{\lceil\delta n\rceil}$ is only used to show that $\frac{\length(R/(I,c))}{\length(R/I)}$ is sufficiently close to $0$. At first glance, one might hope that for a fixed $c\neq 0$, $\frac{\length(R/(I,c))}{\length(R/I)}$ always tends to $0$ as long as $\length(R/I)$ tends to infinity (or at least when $I$ is contained in a sufficiently large power of the maximal ideal). If this is indeed the case, then \autoref{conj.asymptoticLech} (a) holds even without the isolated singularity hypothesis on $\widehat{R}$. Unfortunately, this is false in general, as the next example shows.
\end{remark}

\begin{example}
Let $R=K[[x,y]]$ and let $c=x$. Let $I_N = (x^N, x^{N-1}y,\cdots, xy^{N-1}, y^{N^3})$. Then $I_N\subseteq \m^N$, and we have $\length(R/I_N) = N^3 + O(N^2)$ and $\length(R/(I_N, c)) = N^3$. In particular, $$\lim_{N\to\infty}\frac{\length(R/(I_N,c))}{\length(R/I_N)}=1.$$
\end{example}

If we want to prove \autoref{conj.asymptoticLech} (a) using similar strategy, then the subtlety here is that we must choose $c$ such that $\frac{\length(R/(I,c))}{\length(R/I)}$ tends to $0$ for {\it all} sufficiently deep ideals. For this we need $c$ to be sufficiently ``general" (see section 3 and \autoref{clm.Lengthmodc}). But then to run the proof of \autoref{prop.BL-2ndproof} we also require $c$ to be the discriminant of certain map $A\to R$ coming from the Cohen--Gabber theorem. Such $c$ is indeed ``special", and even with the isolated singularity hypothesis, we do {\it not} know whether there exists $c$ that satisfies both conditions. In section 4, we resolve this issue by adjoining invertible indeterminates to $R$ and then applying the Lipman--Sathaye Jacobian theorem \cite{LipmanSathaye,HochsterLipmanSathaye}. This will give us some freedom in choosing $c$.

\begin{remark}
\label{rem.OptimalBlumLiu}
\begin{enumerate}
  \item In \autoref{prop.BL-2ndproof}, for the upper bound $\ehk(I)\leq (1+\varepsilon)\length(R/I)$ (and $\eh(I)\leq d!(1+\varepsilon)\length(R/I)$), we only need to assume $\widehat{R}$ is reduced. In this case we have $0=P_1\cap\cdots P_n\cap Q_1\cap\cdots \cap Q_m$ in $\widehat{R}$ where $P_1,\dots, P_n$ are minimal primes of dimension $d$ and $Q_1,\dots, Q_m$ are minimal primes of lower dimension. Let $S=\widehat{R}/(P_1\cap\cdots \cap P_n)$. Then $S$ is reduced and equidimensional so we can apply \autoref{prop.BL-2ndproof} to $S$. But $\ehk(I)=\ehk(IS)$ since Hilbert--Kunz multiplicity does not see lower-dimensional components, and $\length(S/IS)\leq \length(\widehat{R}/I\widehat{R})=\length(R/I)$. Therefore the result for $S$ implies the result for $R$.
  \item On the other hand, the upper bound in \autoref{prop.BL-2ndproof} fails in general if $\widehat{R}$ is not reduced: let $R=K[[x,y]]/x^2$ and for every $\delta,n$ let $I_n=\m^n+x\m^{\lceil\delta n\rceil}$. Then clearly $\m^n\subseteq I_n\subseteq \m^{\lceil\delta n\rceil}$. Since $x$ is nilpotent, $\overline{I_n}=\overline{\m^n}$ and thus $\eh(I_n)=\eh(\m^n)=2n$ while $\length(R/I_n)=n+\lceil\delta n\rceil$. Therefore for $\varepsilon, \delta$ small, $\eh(I_n)>(1+\varepsilon)\length(R/I_n)$ for all $n$.
\end{enumerate}
\end{remark}

We end this section by showing that, the lower bound for Hilbert--Samuel multiplicity in \autoref{prop.BL-2ndproof} holds in all characteristics in full level of generality.
\begin{proposition}
Let $(R,\m)$ be a Noetherian local ring of dimension $d$. Then for any positive numbers $\delta,\varepsilon \in (0,1)$, there exists $n_0$ such that for any $n\geq n_0$ and any ideal $\m^n\subseteq I\subseteq \m^{\lceil\delta n\rceil}$, we have $$(1-\varepsilon)\length(R/I)\leq \eh(I).$$
\end{proposition}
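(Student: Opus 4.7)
The plan is induction on $d=\dim R$, reducing the dimension via superficial elements.

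First, reduce to the case where $R$ is complete with infinite residue field by standard faithfully flat extensions preserving both $\length(R/I)$ and $\eh(I)$. Then mod out $H:=H^0_\m(R)$: this changes $\length(R/I)$ by at most $\length(H)$ (a constant) and preserves $\eh(I)$ since $H$ has dimension $0$. Thus WLOG $\operatorname{depth}(R)\geq 1$.

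Base case $d=1$: $R$ is now Cohen--Macaulay of dimension $1$; a minimal reduction $(f)\subseteq I$ satisfies $\length(R/(f))=\eh((f))=\eh(I)$, hence $\length(R/I)\leq \length(R/(f))=\eh(I)$. Absorbing the constant $\length(H)$ using $\length(R/I)\geq \length(R/\m^{\lceil\delta n\rceil})\to\infty$ gives $(1-\varepsilon)\length(R/I)\leq \eh(I)$ for $n\geq n_0$. Inductive step $d\geq 2$: since the residue field is infinite, by prime avoidance there exists $f\in I$ that is simultaneously superficial for $I$, a parameter element (so $\dim R/(f)=d-1$), and a non-zerodivisor---these are all generic conditions on $f$. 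Setting $\bar R:=R/(f)$ and $\bar I:=I\bar R$, the sandwich condition passes to $\bar\m^n\subseteq \bar I\subseteq \bar\m^{\lceil\delta n\rceil}$ in $\bar R$; we have $\length_R(R/I)=\length_{\bar R}(\bar R/\bar I)$ since $f\in I$; and by the standard superficial-element multiplicity formula, $\eh(I,R)=\eh(\bar I,\bar R)$. Applying the inductive hypothesis to $\bar R$ then yields the desired inequality.

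The main obstacle is the uniformity of $n_0$: the inductive hypothesis applied to $\bar R$ yields a threshold $n_0(\bar R,\varepsilon,\delta)$ depending on $\bar R$, which in turn depends on the choice of $f\in I$ and hence on $I$. To conclude we need a single $n_0$ that works uniformly for all $I$ in the sandwich. I expect this to be resolved by unwinding the induction down to the base case and observing that the relevant invariants (principally the lengths of $H^0_\m$ of iterated quotients $R/(f_1,\dots,f_k)$) can be uniformly bounded in terms of $R$ alone, via standard uniform Artin--Rees type estimates.
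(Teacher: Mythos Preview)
Your inductive approach via superficial elements is natural, and you correctly identify the key obstacle: the threshold $n_0(\bar R,\varepsilon,\delta)$ from the inductive hypothesis depends on $\bar R=R/(f)$, which in turn depends on $I$. However, your proposed resolution does not work. You suggest that the lengths $\length\big(H^0_\m(R/(f_1,\dots,f_k))\big)$ can be bounded uniformly in terms of $R$ alone. This is false: these lengths genuinely grow with $n$. For instance, take $R=k[[x,y,z]]/(xz,yz)$ (reduced, dimension $2$, not equidimensional) and $I=\m^n$. Any parameter non-zerodivisor $f\in I$ must lie outside the minimal prime $(x,y)$, hence has a nonzero pure $z^m$ term with $m\geq n$; the one-dimensional component of $R$ then forces $R/(x,y,f)\cong k[[z]]/(z^m)$ to sit inside $H^0_\m(R/(f))$, so $\length(H^0_\m(R/(f)))\geq n$. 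No constant bound is possible, and ``uniform Artin--Rees'' does not speak to this quantity.

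What would actually be needed is that these $H^0_\m$ lengths grow no faster than $O(n^{d-1})$, so that they are negligible against $\eh(I)\geq\lceil\delta n\rceil^d\eh(R)$. This may well be true, but it is a substantive claim requiring its own proof---and once you try to establish it, you are essentially reproducing the content of the paper's argument by other means. The paper bypasses the induction entirely by invoking Vasconcelos's homological degree: one has
\[
\length(R/I)\leq\mathrm{hdeg}(I,R)=\eh(I)+\sum_{P\in\Lambda}\eh(I,R/P),
\]
where $\Lambda$ is a \emph{fixed} finite set of primes of dimension strictly less than $d$, depending only on $R$ and not on $I$. The sandwich condition then gives $\eh(I)\geq\lceil\delta n\rceil^d\eh(R)$ and $\eh(I,R/P)\leq n^{d-1}\eh(R/P)$, so the ratio of error to main term is $O(1/n)$ with constants depending only on $R$. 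This is exactly the uniformity your approach lacks.
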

\begin{proof}
We make use of Vasconcelos's homological degree \cite{Vasconcelos} as in \cite{KMQSY}. We may assume $R$ is complete with infinite residue field. Let $\mathrm{hdeg}(I, R)$ be the homological degree with respect to $I$ (see \cite[Definition 2.3]{KMQSY}). We have $\length(R/I) \le \mathrm{hdeg}(I, R)$ and
$$\mathrm{hdeg}(I, R) = \eh(I) + \sum_{P \in \Lambda} \eh(I, R/P),$$
where $\Lambda$ is a finite set of prime ideals of dimension strictly less than $d$, by \cite[(2.1) and Definition 2.3]{KMQSY}. Since $\m^n\subseteq I\subseteq \m^{\lceil\delta n\rceil}$, we have
$$\eh(I) \geq \eh( \m^{\lceil\delta n\rceil}) = \lceil\delta n\rceil^{d}\eh(R)$$
and
$$\eh(I, R/P) \leq \eh(\m^n, R/P) \le  n^{d-1} \eh(R/P).$$
Therefore we can choose $n_0$ large enough such that $(1-\varepsilon)\mathrm{hdeg}(I, R) \leq \eh(I)$ for any $n\geq n_0$. It follows that $(1-\varepsilon)\length(R/I)\leq \eh(I)$.
\end{proof}

\section{Comparison between socle and colength}

The goal of this section is to prove \autoref{thm.ControlSocle} and \autoref{cor.ControlColon}, which will be used in Section 4. These results are basically saying that the socle of an $\m$-primary ideal $I$ is small compared to the colength of $I$, {\it as long as the colength of $I$ is sufficiently large}. Such a statement is clear if $I$ is a large power of an $\m$-primary ideal, as the growths of the socle and colength are controlled by the Hilbert polynomials. However, we emphasize that we do not impose any condition on the shape of $I$, which is exactly the subtlety.

The proof strategy is to reduce the question to a question about minimal number of generators (instead of socles), and then prove the corresponding statement for all finitely generated modules over a complete regular local ring by induction. Below we give the details. We start by proving a critical lemma for monomial ideals in a polynomial ring.

\begin{lemma}
\label{lem.monomial}
Let $A = K[x_1, \ldots, x_d]$ be a polynomial ring over a field $K$ and $\mf m = (x_1, \ldots, x_d)$.
Then for every $\varepsilon > 0$ there exists $N > 0$ such that for any $\mf m$-primary monomial ideal
$J$ with $\ord (J) \geq N$, we have
\[
\frac {\numg (J)}{\length (A/J)} < \varepsilon.
\]
\end{lemma}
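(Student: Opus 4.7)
The plan is a direct charging argument comparing minimal monomial generators of $J$ with the standard monomial $K$-basis of $A/J$. Let $G$ be the set of minimal monomial generators of $J$, so $\numg(J)=|G|$, and let $D$ be the set of monomials of $A$ lying outside $J$, which forms a $K$-basis of $A/J$ and satisfies $\length(A/J)=|D|$. The idea is to associate to each $m\in G$ a large set of elements of $D$ and to bound how many times any single $n\in D$ can be associated to.

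For the first step, given $m=x_1^{a_1}\cdots x_d^{a_d}\in G$, I consider the proper divisors $m/x_i^{j}$ with $1\le j\le a_i$ and $1\le i\le d$. Each such divisor avoids $J$, because a minimal generator of a monomial ideal has no proper divisor in $J$, so all of these divisors lie in $D$. They are pairwise distinct since the indices $(i,j)$ can be recovered from the exponent vector, so each $m\in G$ contributes exactly $\sum_i a_i=\deg(m)$ elements of $D$. The hypothesis $\ord(J)\ge N$ forces $\deg(m)\ge N$ for every $m\in G$.

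For the second step, I show that each $n\in D$ receives at most $d$ charges. Indeed, for a fixed variable $x_i$ there is at most one exponent $j$ with $nx_i^{j}\in G$: if $nx_i^{j_1},nx_i^{j_2}\in G$ with $j_1<j_2$, then $nx_i^{j_1}$ would be a proper divisor of the minimal generator $nx_i^{j_2}$ that lies in $J$, a contradiction. Summing over $i=1,\dots,d$ gives the bound of $d$ charges on each $n\in D$.

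Double-counting the pairs $(m,m/x_i^{j})$ from these two steps yields
\[
N\cdot\numg(J)\le\sum_{m\in G}\deg(m)\le d\cdot\length(A/J),
\]
so any $N>d/\varepsilon$ suffices. I do not anticipate a substantive obstacle: the argument is purely combinatorial on the divisibility order among monomials, and in fact it produces the explicit, dimension-only estimate $N=\lceil d/\varepsilon\rceil+1$.
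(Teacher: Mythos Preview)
Your argument is correct and takes a genuinely different route from the paper's proof. The paper proceeds by induction on $d$: it slices $J$ along powers of $x_d$, writing $J_i\subseteq K[x_1,\dots,x_{d-1}]$ for the ideal generated by monomials $m$ with $mx_d^i\in J$, and then combines the inductive bound on the first $k$ slices with the trivial bound $\sum_{i\ge k}\numg(J_i)\le A_{k-1}:=\length(K[x_1,\dots,x_{d-1}]/J_{k-1})$. This yields the lemma but only with a recursively defined $N$, and the dependence of $N$ on $d$ and $\varepsilon$ is not made explicit. Your charging argument, by contrast, is entirely direct: associating to each minimal generator $m$ its $\deg(m)$ ``axis-parallel'' proper divisors and observing that each standard monomial $n\notin J$ can only be charged once per variable immediately gives the sharp-looking inequality $N\,\numg(J)\le d\,\length(A/J)$, hence the explicit choice $N>d/\varepsilon$. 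This is shorter, avoids induction, and produces a clean linear bound $\numg(J)/\length(A/J)\le d/\ord(J)$ that the paper's method does not.
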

\begin{proof}
We use induction on $d$. The base case $d = 1$ is clear.
Given $\varepsilon >0$, let $k$ be an integer such that $1/k < \varepsilon/2$.
Furthermore, we fix $N_0$ that satisfies the induction hypothesis for $\varepsilon/(2k)$ in $K[x_1, \ldots, x_{d-1}]$.
We claim that $N = N_0 + k$ will satisfy the lemma.

Given an $\mf m$-primary monomial ideal $J \subseteq \mf m^N$, we let $J_i$ be the projection of $J$ onto
$K[x_1, \ldots, x_{d - 1}]$ at $x_d^i$, i.e.,
\[
J_i =
\{x_1^{\alpha_1}\cdots x_{d-1}^{\alpha_{d -1}} \mid x_1^{\alpha_1}\cdots x_{d-1}^{\alpha_{d -1}} x_d^i \in J\}.
\]
Then $\{J_i\}_{i=1}^\infty$ form an ascending chain. We set $A_i = \length (K[x_1, \ldots, x_{d-1}]/J_i)$.

It is easy to see that a minimal generator of $J$ will be necessarily a minimal generator of one of the $J_i$
and no two minimal generators can project to the same monomial (since otherwise they differ by a power of $x_d$ which contradicts that they are both minimal generators). Hence,
\[
\numg(J) \leq \sum_{i = 0}^\infty \numg(J_i) \leq \numg(J_0) + \cdots + \numg (J_{k- 1}) + A_{k-1}.
\]
Since $\length (A/J) \geq A_0 + \cdots + A_{k-1}$, we obtain that
\[
\frac {\numg (J)}{\length (A/J)}
\leq \frac{\numg(J_0) + \ldots + \numg (J_{k-1}) + A_{k-1}}{A_0 + \cdots + A_{k -1}}
\leq \frac{A_{k-1}}{A_0 + \cdots + A_{k - 1}} + \frac{\numg(J_0)}{A_0} + \cdots + \frac{\numg(J_{k-1})}{A_{k -1}}.
\]
Since $A_i$ are non-increasing, we have
\[
\frac{A_{k - 1}}{A_0 + \cdots + A_{k-1}} \leq \frac{A_{k-1}}{kA_{k -1}} = \frac 1k < \frac \varepsilon 2.
\]
On the other hand, we have $\ord (J_i) \geq \ord(J) - i\geq N-k\geq N_0$ by the construction. So by our choice of $N_0$,
$\frac{\numg(J_i)}{A_i} < \varepsilon/(2k)$ for any $0 \leq i \leq k - 1$.
Therefore
\[
\frac {\numg (J)}{\length (A/J)}
\leq \frac{A_{k-1}}{A_0 + \cdots + A_{k - 1}} + \frac{\numg(J_0)}{A_0} + \cdots + \frac{\numg(J_{k-1})}{A_{k -1}}
< \frac \varepsilon 2 + k \frac \varepsilon {2k} = \varepsilon.
\]
This completes the proof.
\end{proof}

\begin{corollary}
\label{cor.GenvsLenRLR}
Let $(A, \mf m)$ be a regular local ring. Then for any $\varepsilon > 0$ there exists $N$ such that for any $\m$-primary ideal
$I \subseteq \mf m^N$, we have
\[
\frac {\numg (I)}{\length (A/I)} < \varepsilon.
\]
\end{corollary}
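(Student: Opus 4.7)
The plan is to reduce the general case to the monomial case of \autoref{lem.monomial} via two successive degenerations. Since $I$ is $\m$-primary, both $\numg(I)$ and $\length(A/I)$ are preserved under $\m$-adic completion, so I may assume $A$ is complete. Then $P := \gr_{\m}(A) \cong K[x_1,\ldots,x_d]$, and I will work with the associated graded ideal
$$\tilde J := \bigoplus_{n\geq 0}(I\cap\m^n + \m^{n+1})/\m^{n+1} \subseteq P,$$
which is homogeneous. The first step degenerates $I$ to $\tilde J$, and the second degenerates $\tilde J$ to a monomial ideal.

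For the first step, a routine filtration argument shows $\length(P/\tilde J) = \length(A/I)$, and $I \subseteq \m^N$ translates directly to $\ord(\tilde J) \geq N$. The crucial ingredient is the inequality $\numg(I) \leq \numg(\tilde J)$, which I would establish by a lifting argument: choose homogeneous minimal generators $g_1,\ldots,g_m$ of $\tilde J$ with $\deg g_i = d_i$, lift each to some $f_i \in I \cap \m^{d_i}$ whose leading form is $g_i$, and for any $f \in I$ of leading degree $k$ write $\bar f = \sum_i h_i g_i$ with $h_i$ homogeneous of degree $k - d_i$, lift each $h_i$ to $\widetilde h_i \in \m^{k - d_i}$, and replace $f$ by $f - \sum_i \widetilde h_i f_i \in I \cap \m^{k+1}$. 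Iterating this reduction and summing the resulting $\m$-adically convergent series, which converges because $A$ is complete, writes $f$ as an $A$-linear combination of the $f_i$, so $\numg(I) \leq m = \numg(\tilde J)$.

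For the second step, fix a degree-compatible monomial order $<$ on $P$ (for instance, graded reverse lex) and form the monomial initial ideal $J := \inte(\tilde J)$. Standard Gr\"obner basis facts then yield $\length(P/J) = \length(P/\tilde J)$ (the Hilbert function is preserved), $\ord(J) = \ord(\tilde J) \geq N$ (since $<$ is degree-compatible), and $\numg(\tilde J) \leq \numg(J)$ (upper semicontinuity of graded Betti numbers under Gr\"obner degeneration). Concatenating the two degenerations yields
$$\frac{\numg(I)}{\length(A/I)} \leq \frac{\numg(J)}{\length(P/J)},$$
and taking $N$ to be the threshold provided by \autoref{lem.monomial} for the prescribed $\varepsilon$ completes the argument. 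The step requiring the most care is the lifting argument giving $\numg(I) \leq \numg(\tilde J)$, for which completeness of $A$ is essential; the remaining reductions are routine applications of well-known Gr\"obner facts.
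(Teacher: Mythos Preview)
Your proposal is correct and follows essentially the same two-step degeneration as the paper: pass to the form ideal in $\gr_{\m}(A)$, then to a monomial initial ideal, and invoke \autoref{lem.monomial}. The only notable difference is in how you establish $\numg(I)\leq\numg(\tilde J)$: the paper uses the clean length identity $\numg(I)=\length(G/\inm(\m I))-\length(G/\inm I)$ together with $\inm\m\cdot\inm I\subseteq\inm(\m I)$, which works without completing $A$, whereas your lifting argument requires completeness; both are fine, but the paper's bookkeeping is a bit slicker and handles both degeneration steps uniformly.
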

\begin{proof}
Let $\inm I = \oplus_{n \geq 0} (I \cap \m^n + \m^{n+1})/\m^{n+1}\subseteq G = \gr_{\m} (A)$ be the form ideal of $I$ (see \cite{Lech2} or \cite{HSV}). Since $A$ is regular, $G$ is a standard graded polynomial ring over a field. We have
$$\length (A/I) = \length (G/\inm I), \hspace{1em} \inm I\cdot \inm J  \subseteq \inm (IJ), \text{ and } \inm \m = G_{> 0}.$$
Thus we have
\begin{eqnarray*}
\mu (I) &=& \length (I/\mf m I) = \length (G/\inm(\m I)) - \length (G/\inm I) \\
  &\leq &\length (G/(G_{>0})\cdot\inm I) - \length (G/\inm I) = \mu_G (\inm I).
\end{eqnarray*}
Observe that if $I \subseteq \m^N$, then $\inm I \subseteq \inm \m^N = G_{> 0}^N$,
so we see that it is enough to prove the corollary for homogeneous ideals in
the polynomial ring $G$.

So now we assume $A$ is a polynomial ring over a field and $I$ is an $\m$-primary homogeneous ideal. Pick a monomial order $<$ and let $J = \inte I$. We basically repeat the above process to reduce to the monomial case. We have $\mf m^N = \inte \mf m^N \supseteq \inte I = J$, $\length (A/I)=\length ((\gr_< A)/{J})$, and that
$\mf m J = \inte \mf m  \cdot \inte I \subseteq \inte (\mf m I)$.
Therefore,
\[
\mu (I) = \length (A/\mf mI) - \length (A/I)
= \length (\frac{\gr_< A}{\inte(\mf mI)}) - \length (\frac{\gr_< A}{\inte I})\leq \length(\frac{\gr_<A}{\m J})-\length(\frac{\gr_<A}{J}) = \mu (J).
\]
Thus it is enough to treat the case of a monomial ideal $J$ in a polynomial ring $\gr_< A$, which is precisely \autoref{lem.monomial}.
\end{proof}

We next prove the main theorem on comparing the minimal number of generators and the colength for sufficiently deep submodules.

\begin{theorem}
\label{thm.GenvsLen}
Let $(A, \mf m)$ be a complete regular local ring and $L$ be a finitely generated $A$-module. Then for every $\varepsilon > 0$ there exists $N > 0$ such that for any submodule $M \subseteq \mf m^N L$ with $\length (L/M) < \infty$, we have
\[
\frac{\mu(M)}{\length (L/M)} < \varepsilon.
\]
\end{theorem}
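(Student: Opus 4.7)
The plan is to induct on $d = \dim A$. For the base case $d = 1$, $A$ is a complete DVR; any finitely generated module decomposes as $A^a \oplus T$ with $T$ of finite length, and a direct Smith-normal-form argument gives $\mu(M)/\length(L/M) \leq 1/N$ for any $M \subseteq \m^N L$ of finite colength once $N$ exceeds the Loewy length of $T$.

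The inductive step begins with a short exact sequence reduction: given $0 \to L' \to L \to L'' \to 0$, setting $M' = M \cap L'$ and $M''$ the image of $M$ in $L''$, Artin--Rees yields $M' \subseteq \m^{N-c} L'$ for some $c$ depending only on $L' \subseteq L$, while $M'' \subseteq \m^N L''$ is automatic. Combined with $\mu(M) \leq \mu(M') + \mu(M'')$ and the length-additivity $\length(L/M) = \length(L'/M') + \length(L''/M'')$, this implies the theorem for $L$ whenever it holds for $L'$ and $L''$; edge cases where $L'$ or $L''$ has finite length force $M'$ or $M''$ to vanish for large $N$ and are handled directly. Running this reduction along a prime filtration $0 = L_0 \subset \cdots \subset L_r = L$ with $L_i/L_{i-1} \cong A/P_i$ reduces the problem to $L = A/P$ for a single prime $P$. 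The cases $P = \m$ and $P = 0$ are trivial and \autoref{cor.GenvsLenRLR} respectively.

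The key remaining case is $0 \neq P \neq \m$. By Cohen's structure theorem, $\bar A := A/P$ contains a complete regular local subring $R$ with $\dim R = \dim \bar A < d$, $R/\m_R = k$, and $\bar A$ module-finite over $R$. Submodules of $L = \bar A$ are exactly $\bar\m$-primary ideals $\bar J$, and since $\m_R \bar A$ is $\bar\m$-primary one can fix $e$ with $\bar\m^e \subseteq \m_R \bar A$, so $\bar J \subseteq \bar\m^N$ implies $\bar J \subseteq \m_R^{\lfloor N/e\rfloor}\bar A$. The inductive hypothesis applied to $\bar A$ as an $R$-module (legitimate because $\dim R < d$) gives the bound $\mu_R(\bar J)/\length_R(\bar A/\bar J) < \varepsilon$ for $N$ large, and this transfers to $\mu_{\bar A}(\bar J)/\length_{\bar A}(\bar A/\bar J) < \varepsilon$ because $\mu_{\bar A}(\bar J) \leq \mu_R(\bar J)$ (any $R$-generating set of $\bar J$ also generates it over $\bar A$) and the two notions of length agree once the residue fields coincide.

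The main obstacle is precisely this last case: a preimage in $A$ of a deep $\bar\m$-primary ideal of $A/P$ need not lie in a high power of $\m$, so \autoref{cor.GenvsLenRLR} cannot be applied directly to the ambient ring. The Noether normalization step circumvents this by transferring the problem to a strictly smaller regular local ring, where the inductive hypothesis supplies the missing uniform bound.
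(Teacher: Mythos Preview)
Your proposal is correct and follows essentially the same architecture as the paper's proof: a short-exact-sequence reduction via Artin--Rees, a prime filtration to reduce to $L=A/P$, \autoref{cor.GenvsLenRLR} for $P=0$, and Cohen's structure theorem plus induction on $\dim A$ for $P\neq 0$. You are in fact slightly more explicit than the paper in treating the base case, the edge cases where a filtration factor has finite length, and the translation of the condition $\bar J\subseteq\bar{\mf m}^N$ to $\bar J\subseteq \mf m_R^{\lfloor N/e\rfloor}\bar A$ across the Noether normalization---points the paper leaves implicit.
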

\begin{proof}
We first prove the following claim.
\begin{claim}
\label{clm.shortexactsequence}
Let $0\to L_1\to L\to L_2\to 0$ be a short exact sequence of finitely generated $A$-modules. Suppose for every $\varepsilon > 0$ there exists $N_1, N_2 > 0$ such that for any submodule $M_i \subseteq \mf m^{N_i} L_i$ with $\length (L_i/M_i) < \infty$, we have
\[
\frac{\mu(M_i)}{\length (L_i/M_i)} < \varepsilon.
\]
Then for every $\varepsilon>0$, there exists $N$ such that for any submodule $M \subseteq \mf m^N L$ with $\length (L/M) < \infty$, we have
\[
\frac{\mu(M)}{\length (L/M)} < \varepsilon.
\]
\end{claim}
\begin{proof}[Proof of Claim]
Let $M\subseteq  L$ be a submodule. We have an induced short exact sequence:
\[ 0\to \frac{L_1}{M\cap L_1}\to \frac{L}{M}\to \frac{L}{M+L_1}\cong \frac{L_2}{\im(M)}\to 0.
\]
From the above sequence it is clear that we have:
$$\mu(L/M)\leq \mu(\frac{L_1}{M\cap L_1})+ \mu(\frac{L_2}{\im(M)}) $$
and, when $\length(L/M)<\infty$,
$$\length(L/M)=\length(\frac{L_1}{M\cap L_1})+ \length(\frac{L_2}{\im(M)}).$$
Moreover, if $N\gg0$ and $M\subseteq \m^NL$, then by the Artin-Rees lemma we know there is a constant $C$ such that $M\cap L_1\subseteq \m^{N-C}L_1$. In particular for $N\gg0$, $M\cap L_1\subseteq \m^{N_1}L_1$ and that $\im(M)\subseteq \m^{N_2}L_2$. Therefore
$$\frac{\mu(M)}{\length (L/M)}\leq \frac{\mu(\frac{L_1}{M\cap L_1})+ \mu(\frac{L_2}{\im(M)})}{\length(\frac{L_1}{M\cap L_1})+ \length(\frac{L_2}{\im(M)})}\leq \max\{\frac{\mu(\frac{L_1}{M\cap L_1})}{\length(\frac{L_1}{M\cap L_1})}, \frac{\mu(\frac{L_2}{\im(M)})}{\length(\frac{L_2}{\im(M)})}\}<\varepsilon.  \qedhere$$
\end{proof}
Now we prove the theorem. We use induction on $\dim A$. For any finitely generated $A$-module $L$, we consider a prime filtration
$$0=L_0\subseteq L_1\subseteq\cdots \subseteq L_n=L$$
where $L_{i+1}/L_i=A/Q$ for a prime ideal $Q\subseteq A$. By \autoref{clm.shortexactsequence}, it is enough to prove the theorem for each $A/Q$. Now if $Q=0$, the result follows from \autoref{cor.GenvsLenRLR}. If $Q\neq 0$, then by Cohen's structure theorem we know that $A/Q$ is a finitely generated module over a complete regular local ring $A'$ with $\dim A'<\dim A$. Note that if we view an ideal $J\subseteq A/Q$ as an $A'$-submodule, then the minimal number of generators computed over $A'$ will only possibly increase compared with the minimal number of generators computed over $A$ (while the colengths are the same). So the result follows by induction.
\end{proof}

So far our results deal with submodules that are contained in a large power of the maximal ideal times the ambient module. The next corollary improves this condition to the condition that the colength of the submodule is sufficiently large.

\begin{corollary}
\label{cor.GenvsLen}
Let $(A, \mf m)$ be a complete regular local ring and $L$ be a finitely generated $A$-module.
Then for every $\varepsilon > 0$ there exists $N > 0$ such that for any submodule $M$
such that $N < \length (L/M) < \infty$, we have
\[
\frac{\numg (M)}{\length (L/M)} < \varepsilon.
\]
\end{corollary}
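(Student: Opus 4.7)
The plan is to bootstrap from \autoref{thm.GenvsLen} by cutting $M$ along a fixed power of $\m$. Given $\varepsilon > 0$, I would first invoke \autoref{thm.GenvsLen} with tolerance $\varepsilon/2$ to obtain an integer $k > 0$ such that every finite-colength submodule $M' \subseteq \m^k L$ satisfies $\mu(M')/\length(L/M') < \varepsilon/2$. Set $c := \length(L/\m^k L)$, a finite constant depending only on $k$ and $L$.

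Now let $M \subseteq L$ be an arbitrary submodule with $\length(L/M) < \infty$, and consider $M' := M \cap \m^k L$. The inclusion $M/M' \hookrightarrow L/\m^k L$ shows $\length(M/M') \leq c$, so $M/M'$ is generated by at most $c$ elements. From the short exact sequence $0 \to M' \to M \to M/M' \to 0$ we get $\mu(M) \leq \mu(M') + c$. Moreover $\length(L/M') = \length(L/M) + \length(M/M') \leq \length(L/M) + c$, and since $M' \subseteq \m^k L$, \autoref{thm.GenvsLen} yields $\mu(M') < (\varepsilon/2)\length(L/M')\leq (\varepsilon/2)(\length(L/M) + c)$.

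Combining these inequalities and dividing by $\length(L/M)$ produces a bound of the form $\varepsilon/2 + C/\length(L/M)$, where $C$ depends only on $\varepsilon$ and $c$. Choosing $N$ large enough that $C/N < \varepsilon/2$ then yields $\mu(M)/\length(L/M) < \varepsilon$ whenever $\length(L/M) > N$, which is the conclusion. This is essentially a mechanical bootstrap, and I do not anticipate any genuine obstacle: the only nontrivial input is \autoref{thm.GenvsLen}, and the passage from "$M$ deep in $L$" to "$L/M$ of large length" is handled entirely by the elementary observation that $M/(M\cap \m^k L)$ has uniformly bounded length.
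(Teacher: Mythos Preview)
Your proof is correct and essentially identical to the paper's: both invoke \autoref{thm.GenvsLen} with tolerance $\varepsilon/2$ to obtain $k$, set $M' = M \cap \m^k L$ and $c = \length(L/\m^k L)$, use the bounds $\mu(M) \leq \mu(M') + c$ and $\length(L/M') \leq \length(L/M) + c$, and arrive at the same final estimate $\mu(M)/\length(L/M) < \varepsilon/2 + c(1+\varepsilon/2)/\length(L/M)$. The only cosmetic difference is that the paper computes $\mu(M) \leq \mu(M') + c$ via lengths directly, while you route through the short exact sequence and $\mu(M/M') \leq \length(M/M')$.
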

\begin{proof}
We fix a $k$ that satisfies the conclusion of \autoref{thm.GenvsLen} for $\varepsilon/2$. Namely, for any
submodule $M' \subseteq \mf m^k L$ with $\length (L/M') < \infty$, we have
\[
\frac{\mu(M')}{\length (L/M')} < \varepsilon/2.
\]
Let $M' = M \cap \mf m^kL$.  We have
\[
\length (L/M) = \length (L/M') - \length (M/M') = \length (L/M') - \length ((M + \mf m^kL)/\mf m^kL)
\geq \length (L/M') - \length (L/\mf m^kL)
\]
and
\[
\numg (M) = \length (M/\mf mM) = \length (M/M') + \length (M'/\mf mM') - \length (\mf mM/\mf mM')
\leq  \numg (M') + \length (L/\mf m^kL).
\]
Because $M' \subseteq \mf m^kL$, by our choice of $k$,
\[
\frac {\numg (M')}{\length (L/M')} < \frac \varepsilon 2.
\]
Hence we have
\begin{align*}
  \frac{\numg(M)}{\length (L/M)} & \leq  \frac {\numg (M') +  \length (L/\mf m^kL)}{\length (L/M)}= \frac {\numg (M')}{\length (L/M)}+\frac {\length (L/\mf m^kL)}{\length (L/M)}\\
   & \leq  \frac {\numg (M')}{\length (L/M')}\frac{\length (L/M) + \length (L/\mf m^kL)}{\length (L/M)} +
\frac {\length (L/\mf m^kL)}{\length (L/M)}\\
   & < \frac{\varepsilon} 2 + \frac {\length (L/\mf m^kL)}{\length (L/M)} \left (1 + \frac{\varepsilon} 2\right ).
\end{align*}
Since $\length(L/\mf m^kL)$ is a constant, we choose $N\gg0$ such that $\frac {\length (L/\mf m^kL)}{N}<\frac{\varepsilon/2}{1+(\varepsilon/2)}$. The conclusion follows.
\end{proof}

The next theorem is the main result of this section.

\begin{theorem}
\label{thm.ControlSocle}
Let $(R, \mf m)$ be a Noetherian local ring. Then for any $\varepsilon > 0$ there exists $N>0$ such that for any $\mf m$-primary ideal $I$ with $\length (R/I) > N$,
\[
\frac {\length ((I:\m)/I)}{\length (R/I)} < \varepsilon.
\]
\end{theorem}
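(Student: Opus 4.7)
The plan is to reduce the question from socles to minimal number of generators and then apply \autoref{cor.GenvsLen} directly. Since colength and socle length are preserved by $\m$-adic completion (colon ideals commute with the faithfully flat base change $R \to \widehat{R}$ as $\m$ is finitely generated), I may assume $R = \widehat{R}$. By Cohen's structure theorem, $\widehat{R} = A/J$ for some complete regular local ring $(A,\m_A)$ of dimension $n$; since $R/I = A/\widetilde{I}$ for the preimage $\widetilde{I} \supseteq J$ of $I$, and this identification gives $\soc_R(R/I) = \soc_A(A/\widetilde{I})$ with the same length, it suffices to prove the theorem for $A$.

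The bridge from socle to $\mu$ is the elementary observation: setting $K = (I :_A \m)$, we have $\soc(A/I) = K/I$, and since $\m K \subseteq I$ the surjection $K/\m K \twoheadrightarrow K/I$ gives
\[
\dim_k \soc(A/I) \;=\; \dim_k K/I \;\leq\; \dim_k K/\m K \;=\; \mu(K).
\]
Given $\varepsilon > 0$, I will then apply \autoref{cor.GenvsLen} with $L = A$ to obtain some $N_1$ such that every ideal $M \subseteq A$ with $\length(A/M) > N_1$ satisfies $\mu(M) < \varepsilon\, \length(A/M)$. Taking $M = K$ and using the length decomposition $\length(A/K) = \length(A/I) - \soc(A/I)$, a short rearrangement yields $\soc(A/I)/\length(A/I) < \varepsilon/(1+\varepsilon) < \varepsilon$, provided that $\length(A/K) > N_1$.

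The hard part will be ensuring the hypothesis $\length(A/K) > N_1$ for large $\length(A/I)$, since a priori $K$ could have small colength if the socle were huge. I will rule this out by a Loewy length argument: if $\length(A/K) \leq N_1$, then $\m^{N_1}(A/K) = 0$ (any Artinian module has Loewy length bounded by its length), hence $\m^{N_1} \subseteq K$, and so $\m^{N_1+1} \subseteq \m K \subseteq I$. This forces $\length(A/I) \leq \length(A/\m^{N_1+1}) = \binom{N_1+n}{n}$, a constant depending only on $\varepsilon$ (via $N_1$) and $n$. Choosing $N > \binom{N_1+n}{n}$ therefore rules out the bad case whenever $\length(A/I) > N$, completing the proof.
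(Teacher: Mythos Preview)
Your proof is correct and follows essentially the same approach as the paper: pass to the completion, view everything over a complete regular local ring $A$, bound the socle length by $\mu(I:\m)$, invoke \autoref{cor.GenvsLen}, and use a Loewy-length argument to ensure $\length(A/(I:\m))$ is large whenever $\length(A/I)$ is. The paper applies \autoref{cor.GenvsLen} directly to $R$ viewed as an $A$-module rather than lifting to the preimage ideal in $A$, and it concludes via the simpler inequality $\frac{\length((I:\m)/I)}{\length(R/I)} \leq \frac{\mu(I:\m)}{\length(R/(I:\m))}$ instead of your rearrangement yielding $\varepsilon/(1+\varepsilon)$, but these are cosmetic differences.
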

\begin{proof}
Since there is a one to one correspondence between $\m$-primary ideals in $R$ and $\widehat{R}$ and completion does not affect socle and length, we can replace $R$ by $\widehat{R}$ to assume $R$ is complete. By Cohen's structure theorem, $R$ is a quotient of a complete regular local ring $A$ (and hence a finitely generated $A$-module). Now by \autoref{cor.GenvsLen}, there exists $N_0$ such that if $N_0<\length(R/J)<\infty$, then $\frac{\mu(J)}{\length(R/J)}<\varepsilon$. We fix this $N_0$.
\begin{claim}
\label{clm.Lengthsoc}
There exists $N$ such that if $\length(R/I)>N$, then $\length(R/(I:\m))>N_0$.
\end{claim}
\begin{proof}[Proof of Claim]
We will show that $N=\length(R/\m^{N_0+1})$ works. In fact, if $\length(R/(I:\m))\leq N_0$, then we have $\m^{N_0}\subseteq I:\m$. For if $\m^{N_0}\nsubseteq I:\m$, then $\gr_{\m} (R/(I:\m))$ is nontrivial in degree $N_0$, thus the colength of $I:\m$ is at least $N_0+1$ which is a contradiction. But then we have $\m^{N_0+1}\subseteq I$ and hence $\length(R/I)\leq \length(R/\m^{N_0+1})=N$.
\end{proof}

If $I$ is an $\m$-primary ideal with $\length(R/I)>N$, \autoref{clm.Lengthsoc} shows that $\length(R/(I:\m))>N_0$, so
$$\frac {\length ((I:\m)/I)}{\length (R/I)}\leq \frac{\mu(I:\m)}{\length(R/(I:\m))}<\varepsilon$$
by our choice of $N_0$. This completes the proof.
\end{proof}

The following corollary will be a crucial ingredient in the proof of the main result in the next section.

\begin{corollary}
\label{cor.ControlColon}
Let $(R, \mf m)$ be a Noetherian local ring and $J\subset R$ a fixed $\m$-primary ideal.  Then for any $\varepsilon > 0$ there exists $N>0$ such that for any $\mf m$-primary ideal $I$ with $\length (R/I) > N$,
\[
\frac {\length ((I:J)/I)}{\length (R/I)} < \varepsilon.
\]
\end{corollary}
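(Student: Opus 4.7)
The plan is to reduce to the case $J=\m$ (which is \autoref{thm.ControlSocle}) by iterating the colon operation. Since $J$ is $\m$-primary, there exists $k$ with $\m^k\subseteq J$, so $I:J\subseteq I:\m^k$ and it suffices to bound $\length((I:\m^k)/I)/\length(R/I)$. Use the ascending chain
\[
I\subseteq I:\m\subseteq I:\m^2\subseteq\cdots\subseteq I:\m^k
\]
to write
\[
\length\bigl((I:\m^k)/I\bigr)=\sum_{i=0}^{k-1}\length\bigl((I:\m^{i+1})/(I:\m^i)\bigr),
\]
and observe that $(I:\m^{i+1})/(I:\m^i)=\bigl((I:\m^i):\m\bigr)/(I:\m^i)$, which is precisely the quantity controlled by \autoref{thm.ControlSocle} applied to the $\m$-primary ideal $I:\m^i$.

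Next, apply \autoref{thm.ControlSocle} with tolerance $\varepsilon/k$ to produce a threshold $N'$ such that for every $\m$-primary ideal $I'$ with $\length(R/I')>N'$, one has $\length((I':\m)/I')/\length(R/I')<\varepsilon/k$. To make this usable we need $\length(R/(I:\m^i))>N'$ for every $0\leq i\leq k-1$. Here we iterate \autoref{clm.Lengthsoc} (inside the proof of \autoref{thm.ControlSocle}), which tells us that for any threshold $M$ there exists $M'$ with the property that $\length(R/I')>M'$ implies $\length(R/(I':\m))>M$; concretely $M'=\length(R/\m^{M+1})$ works. Setting $L_0=N'$ and $L_j=\length(R/\m^{L_{j-1}+1})$, we take $N=\max(L_k,\,\length(R/\m^k))$. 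Then $\length(R/I)>N$ forces $I:\m^i$ to be a proper $\m$-primary ideal and $\length(R/(I:\m^i))>N'$ for all $0\leq i\leq k-1$.

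Finally, combine: for each $i$, since $I\subseteq I:\m^i$ we have $\length(R/(I:\m^i))\leq\length(R/I)$, so
\[
\length\bigl((I:\m^{i+1})/(I:\m^i)\bigr)<\frac{\varepsilon}{k}\length\bigl(R/(I:\m^i)\bigr)\leq\frac{\varepsilon}{k}\length(R/I).
\]
Summing the $k$ terms gives $\length((I:\m^k)/I)<\varepsilon\length(R/I)$, and the containment $I:J\subseteq I:\m^k$ finishes the proof.

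I do not anticipate a serious obstacle: the argument is structural and the two ingredients (the $J=\m$ case and the iteration of \autoref{clm.Lengthsoc}) are already in hand. The only point requiring care is bookkeeping the constants so that the threshold $N$ simultaneously ensures all the intermediate colon ideals $I:\m^i$ are both proper and have colength exceeding $N'$; this is handled by the explicit recursion $L_j=\length(R/\m^{L_{j-1}+1})$ above.
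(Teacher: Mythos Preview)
Your proof is correct and follows essentially the same approach as the paper: reduce to $J=\m^k$, decompose $(I:\m^k)/I$ along the chain $I\subseteq I:\m\subseteq\cdots\subseteq I:\m^k$, apply \autoref{thm.ControlSocle} to each factor, and use \autoref{clm.Lengthsoc} iteratively to guarantee the intermediate colon ideals have large colength. The only cosmetic difference is that the paper organizes this as an induction on $k$ with an $\varepsilon/2$ split at each step, whereas you unroll the induction into $k$ terms with tolerance $\varepsilon/k$.
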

\begin{proof}
Since $J$ is $\m$-primary, there exists $k$ such that $\m^k\subseteq J$. Thus it is enough to prove the corollary for $J=\m^k$. We use induction on $k$. The base case $k=1$ is precisely \autoref{thm.ControlSocle}.

Now suppose the conclusion is proved for $J=\m^{k-1}$. Let $N_1$ be the number that works for $\m$ with $\varepsilon/2$, and let $N_0$ be the number that works for $\m^{k-1}$ with $\varepsilon/2$. Let $N>N_1$ be a number satisfying the conclusion of \autoref{clm.Lengthsoc}. By \autoref{clm.Lengthsoc}, for any $\m$-primary ideal $I$ such that $\length(R/I)>N$, $\length(R/(I:\m))\geq N_0$. Thus we have
\begin{align*}
  \frac{\length((I:\m^k)/I)}{\length(R/I)} & = \frac{\length((I:\m):\m^{k-1}/(I:\m))+\length((I:\m)/I)}{\length(R/I)} \\
   & \leq \frac{\length((I:\m):\m^{k-1}/(I:\m))}{\length(R/(I:\m))}+\frac{\length((I:\m)/I)}{\length(R/I)}\\
   & \leq \frac{\varepsilon}{2}+\frac{\varepsilon}{2}=\varepsilon.
\end{align*}
This finishes the proof.
\end{proof}

We end this section with an example\footnote{In fact, the anonymous referee points out that a similar and simpler example exists even in $K[[x,y]]$, with $I_{N,L}=x(x,y)^{N-1}+(y^L)$ and $J=(x)$.}  showing that the conclusion of \autoref{cor.ControlColon} fails if $J$ is not $\m$-primary.

\begin{example}
Let $R = K[[x,y,z]]$ and $I_{N, L} = \sum_{i = 1}^N (x, y)^{i}z^{N- i} + (z^L)$ where $L\gg N$. So
for each $N$, $I_{N,L}\subseteq \m^N$ (in particular, $\length(R/I_{N,L})\geq N$).
It is easy to check that $I_{N, L}:(x,y) = (x,y,z)^{N-1}$, in particular $\length (I_{N,L}: (x,y)/I_{N, L})\geq L-N$. From this we see that
\[
\lim_{L \to \infty} \frac{\length (I_{N,L}: (x,y)/I_{N,L})}{\length (R/I_{N,L})} = 1.
\]
Thus for every $N>0$, we can find $I_{N,L}$ such that $\frac{\length (I_{N,L}: (x,y)/I_{N,L})}{\length (R/I_{N,L})}$ is arbitrarily close to $1$. So the conclusion of \autoref{cor.ControlColon} cannot hold for $J=(x,y)$. Of course, the problem here is that $J$ is not $\m$-primary.
\end{example}

\section{Proof of Theorem B}

We will use the following version of the Lipman--Sathaye Jacobian theorem \cite[Theorem 2]{LipmanSathaye}. Note that full result found in \cite{LipmanSathaye} is considerably more general but with the additional assumption that $S$ is a domain. However, this condition can be replaced by $S$ being reduced and equidimensional (in particular, if $S$ is module-finite over a regular local ring $B$, then $S$ is torsion free as a $B$-module), see \cite[Theorem~3.1]{HochsterLipmanSathaye} for the generalized version.

\begin{theorem}[Lipman--Sathaye]
\label{thm.LipmanSathaye}
Let $S$ be a complete local ring that is reduced and equidimensional and let $S'$ be the normalization of $S$. Suppose $S$ is module-finite and generically \'etale over a complete regular local ring $B$. Then we have $J_{S/B}S'\subseteq S$.
\end{theorem}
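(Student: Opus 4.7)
The plan is to reduce to the classical Lipman--Sathaye theorem \cite{LipmanSathaye} for complete local domains via decomposition along the minimal primes. Since $S$ is reduced and complete, $Q(S) = \prod_{i=1}^{n} Q(S/P_i)$ where $P_1, \ldots, P_n$ are the minimal primes of $S$, and correspondingly $S' = \prod_{i=1}^{n} (S/P_i)'$, where $(S/P_i)'$ denotes the normalization of $S/P_i$. Equidimensionality of $S$ together with $\dim S = \dim B$ (automatic since $B \hookrightarrow S$ is module-finite and $S$ is torsion-free over $B$) guarantees $\dim(S/P_i) = \dim B$ for every $i$, so each composite $B \hookrightarrow S \twoheadrightarrow S/P_i$ remains injective and module-finite. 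Moreover, generic \'etaleness of $S$ over $B$ means $Q(S)$ is \'etale over $Q(B)$, forcing each $Q(S/P_i)/Q(B)$ to be a finite separable field extension, so each $B \hookrightarrow S/P_i$ is itself generically \'etale.

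Applying the classical Lipman--Sathaye theorem to each inclusion $B \hookrightarrow S/P_i$ yields
\[
J_{(S/P_i)/B} \cdot (S/P_i)' \subseteq S/P_i \quad \text{for every } i = 1, \ldots, n.
\]
A direct Jacobian matrix comparison (writing $S = B[x_1, \ldots, x_r]/I$ and $S/P_i$ as a further quotient by lifts of generators of $P_i$) shows that the image of $J_{S/B}$ in $S/P_i$ is contained in $J_{(S/P_i)/B}$, since the Jacobian matrix for $S/P_i$ over $B$ is obtained from that for $S$ by appending additional rows. Combining gives the componentwise containment $J_{S/B} \cdot S' \subseteq \prod_{i=1}^{n} S/P_i$ inside $S'$.

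The main obstacle---the reason the reduced equidimensional case does not follow trivially from the domain case---is that $S \subseteq S'$ is usually strictly contained in $\prod_{i} S/P_i$, so the componentwise bound above does not immediately land in $S$. Bridging this gap requires an intrinsic description of $J_{S/B}$ via the trace pairing on $Q(S)/Q(B)$: one shows $J_{S/B}$ is contained in the complementary (different) module
\[
\mathfrak{d}_{S/B} = \bigl\{x \in Q(S) : \mathrm{Tr}_{Q(S)/Q(B)}(xS) \subseteq B\bigr\}^{-1},
\]
and $\mathfrak{d}_{S/B} \cdot S' \subseteq S$ holds essentially by construction, because the trace map identifies $S$ with the $B$-dual of $S'$ up to the different. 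This Jacobian--different containment is the content of the Lipman--Sathaye formula and is extended from the domain case to the reduced equidimensional case in \cite[Theorem~3.1]{HochsterLipmanSathaye}. I expect this step (extending the Jacobian--different relation, rather than the final consequence) to require the bulk of the technical effort.
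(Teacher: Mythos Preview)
The paper does not give its own proof of this theorem: it is quoted as a black box, with the domain case attributed to \cite[Theorem~2]{LipmanSathaye} and the reduced equidimensional extension to \cite[Theorem~3.1]{HochsterLipmanSathaye}. So there is no ``paper's proof'' to compare against; the question is whether your sketch is an independent argument or just a re-citation.

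Your proposal is the latter. The componentwise reduction you outline is correct as far as it goes---the decomposition $S' = \prod_i (S/P_i)'$, the fact that each $B \hookrightarrow S/P_i$ is module-finite and generically \'etale, and the containment of the image of $J_{S/B}$ in $J_{(S/P_i)/B}$ are all fine---but, as you yourself note, it only lands you in $\prod_i S/P_i$, not in $S$. You then close the gap by invoking the Jacobian--different containment in the reduced equidimensional setting and citing \cite[Theorem~3.1]{HochsterLipmanSathaye}. That reference \emph{is} the theorem you are asked to prove, so the final step is circular: you have not given an argument, only located where the content lives. If the goal were to supply an actual proof, you would need to carry out the trace/different analysis directly (showing $J_{S/B} \subseteq \mathfrak{d}_{S/B}$ and $\mathfrak{d}_{S/B} S' \subseteq S$ in the reduced equidimensional case), which is precisely what Hochster does and what you defer.
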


We are ready to prove the main technical result of this article.

\begin{theorem}
\label{thm.mainHK}
Let $(R,\m)$ be a Noetherian local ring of characteristic $p>0$ and dimension $d\geq 1$ with $K=R/\m$ perfect. Suppose $\widehat{R}$ is reduced, equidimensional, and has an isolated singularity. Then for every $\varepsilon>0$, there exists $N\gg0$ such that for any $\m$-primary ideal $I$ with $\length(R/I)>N$, we have
$$(1-\varepsilon)\length(R/I)\leq \ehk (I)\leq (1+\varepsilon)\length(R/I).$$
\end{theorem}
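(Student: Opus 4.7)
The plan is to adapt the strategy of \autoref{prop.BL-2ndproof} so that it applies to arbitrary $\m$-primary ideals of sufficiently large colength, rather than only those sandwiched between $\m^n$ and $\m^{\lceil\delta n\rceil}$. Two new ingredients are combined: (i) the Lipman--Sathaye Jacobian theorem (\autoref{thm.LipmanSathaye}), which provides a flexible element to replace the classical discriminant used in \autoref{prop.BL-2ndproof}; and (ii) \autoref{cor.ControlColon} from Section~3, which bounds colon lengths uniformly by colengths. The isolated singularity hypothesis is the bridge between these: it forces the Jacobian ideal $J_{R/A}$ to be $\m$-primary, so that \autoref{cor.ControlColon} is applicable.

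After the usual reductions (completion plus the $\Gamma$-construction) to assume $R$ is complete, $F$-finite, reduced, equidimensional, with perfect residue field and isolated singularity, I would invoke the Cohen--Gabber theorem to pick a complete regular local subring $A\subseteq R$ with $A\to R$ module-finite and generically \'etale, and set $J:=J_{R/A}$. Isolated singularity gives $\sqrt J=\m$, so $J$ is $\m$-primary, while \autoref{thm.LipmanSathaye} gives $JR'\subseteq R$ (where $R'$ is the normalization). For a suitable $c\in J$, the cokernel modules $C_e,D_e$ appearing in the two short exact sequences of \autoref{prop.BL-2ndproof} are annihilated by $c$, so the computation there produces, for every $\m$-primary $I$ and all $e$,
\[
\length(R/I)\!\left(1-\frac{\length(R/(I,c))}{\length(R/I)}\right)\leq \frac{\length(R^{1/p^e}/IR^{1/p^e})}{p^{e\gamma}}\leq \length(R/I)\!\left(1+\frac{\numg(R^{1/p^e})}{p^{e\gamma}}\cdot\frac{\length(R/(I,c))}{\length(R/I)}\right).
\]

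Taking $e\to\infty$ reduces the theorem to showing that $\length(R/(I,c))/\length(R/I)\to 0$ uniformly as $\length(R/I)\to\infty$ for a suitable choice of $c$. A fixed discriminant of a Cohen--Gabber normalization does not suffice (as the example at the end of Section~2 illustrates, the ratio can tend to $1$), so I would adjoin invertible indeterminates: letting $c_1,\ldots,c_n$ generate $J$, pass to the faithfully flat extension $R\langle\underline t\rangle = R[t_1,\ldots,t_n]_{\m R[\underline t]}$, which preserves colengths and Hilbert--Kunz multiplicity ($F$-finiteness is restored afterwards by another completion plus $\Gamma$-construction), and take $c=\sum_i t_ic_i$. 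The element $c$ still lies in the extended Jacobian ideal and retains its Lipman--Sathaye role; a genericity argument combining a Lech-type bound on $\length(R\langle\underline t\rangle/(IR\langle\underline t\rangle,c))$ (in the spirit of \cite[Theorem~2.4]{KMQSY}, used in \autoref{rem.UniformeHKI^n}) with \autoref{cor.ControlColon} applied to the $\m$-primary ideal $J$ then yields $\length(R\langle\underline t\rangle/(IR\langle\underline t\rangle,c))/\length(R/I)<\varepsilon$ whenever $\length(R/I)>N$. Plugging this into the displayed inequality and letting $e\to\infty$ finishes the proof.

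The main obstacle is coordinating the various reductions with the Lipman--Sathaye property: after adjoining $\underline t$, $F$-finiteness is lost and must be restored while preserving reducedness, equidimensionality, and isolated singularity (hence the $\m$-primary Jacobian) for the new ring; moreover, one must verify that $c=\sum t_ic_i$ in the new ring lies in the extended Jacobian and still annihilates $C_e,D_e$, via \autoref{thm.LipmanSathaye} applied to $A\langle\underline t\rangle\to R\langle\underline t\rangle$. The genericity argument bounding $\length(R\langle\underline t\rangle/(IR\langle\underline t\rangle,c))$ uniformly in $\length(R/I)$ is the technical heart of the proof.
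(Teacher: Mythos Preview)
Your overall architecture is right---Lipman--Sathaye plus a generic element plus \autoref{cor.ControlColon}---but there is a genuine gap in the bridge you build between these pieces.

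The claim ``isolated singularity gives $\sqrt{J_{R/A}}=\m$'' is false. The relative Jacobian $J_{R/A}$ cuts out the non-\'etale locus of $A\to R$, not the singular locus of $R$, and these differ already for regular $R$. Concretely, take $R=K[[x,y,z]]/(z^2-xy)$ (char $\neq 2$), an isolated singularity, with the standard Noether normalization $A=K[[x,y]]$: then $J_{R/A}=(2z)$ has height $1$, not $2$. So taking $c$ generic over the generators of $J_{R/A}$ cannot force $I:c$ to be a colon by an $\m$-primary ideal, and \autoref{cor.ControlColon} does not apply. The paper resolves this by working instead with the \emph{absolute} Jacobian $J=J_{R/K}$, which \emph{is} $\m$-primary under the isolated singularity and perfect residue field hypotheses, and by making the Noether normalization itself generic: one adjoins matrices of indeterminates $(a_{ij}),(b_{ij})$, forms generic coordinates $y_i$ and generic equations $g_j$, and takes $A=\widetilde K[[y_{n-d+1},\ldots,y_n]]$. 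The specific minor $c=\det(\partial g_i/\partial y_j)_{i,j\le n-d}$ then lies in $J_{\widetilde R/A}$ (so Lipman--Sathaye gives $c\widetilde R^{1/p^e}\subseteq \widetilde R\otimes_A A^{1/p^e}$) \emph{and} satisfies $\widetilde v(c)=\widetilde v(J\widetilde R)$ for every divisorial valuation $v$ centered at $\m$, because $c$ is a generic linear combination of all the absolute minors.

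The second step---bounding $\length(\widetilde R/(I,c)\widetilde R)/\length(\widetilde R/I\widetilde R)$---is also more delicate than your sketch suggests. The paper first reduces to integrally closed $I$ (via \cite[Corollary~4.2]{KMQSY}), then uses the valuation identity above together with the valuation criterion for integral closure to conclude $I\widetilde R:c=I\widetilde R:J\widetilde R$; only then does \autoref{cor.ControlColon} apply. A vague ``Lech-type bound in the spirit of \cite{KMQSY}'' does not substitute for this chain of reductions. Finally, the $\Gamma$-construction is unnecessary here: since $K$ is perfect, $\widehat R$ is already $F$-finite, and passing to $\widetilde K=K(a_{ij},b_{ij})$ preserves $F$-finiteness directly.
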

\begin{proof}
We pass to the completion to assume $R$ is complete, reduced, equidimensional and has an isolated singularity. We fix a presentation $R=\frac{K[[x_1,\dots,x_n]]}{(f_1,\dots,f_t)}$ and let $J$ be the Jacobian ideal of $R$ over $K$, i.e., $J$ is the ideal generated by the $(n-d)\times(n-d)$-minors of the matrix $(\frac{\partial f_i}{\partial x_j})_{\substack{1\leq i\leq t \\ 1\leq j\leq n}}$. Since $K$ is perfect and $R$ is equidimensional, the radical of $J$ is the defining ideal of the non-singular locus of $R$ \cite[Proposition 4.5 and Lemma 4.3]{WangFittingIdeals}. Thus we know that $J$ is $\m$-primary.

We next consider $\widetilde{R}=\frac{\widetilde{K}[[x_1,\dots,x_n]]}{(f_1,\dots,f_t)}$ with $\widetilde{K}=K(a_{ij}, b_{ij})$, where $(a_{ij})_{n\times n}$ and $(b_{ij})_{t\times t}$ are matrices of new invertible indeterminates. Let $(y_1,\dots,y_n)=(x_1,\dots,x_n)(a_{ij})$ and $(g_1,\dots,g_t)=(f_1,\dots,f_t)(b_{ij})$. We claim that $\widetilde{K}[[y_{n-d+1},\dots,y_n]]\to \widetilde{R}$ is module-finite and generically \'etale (in fact, this holds for every $\widetilde{K}[[y_{i_1},\dots,y_{i_d}]]$). To see this, note that since $K$ is perfect and $R$ is equidimensional, the complete module of differentials $\widehat{\Omega}_{(R/P)/K}$ has rank $d$ for every minimal prime $P$ of $R$.\footnote{In fact, this is true even without assuming $K$ perfect, but then one must choose a coefficient field carefully, see \cite[Expos\'{e} IV, 2.1.11]{CohenGabberOriginal}.} Thus by base change
$$\rank_{\widetilde{R}}\widehat{\Omega}_{(\widetilde{R}/P)/\widetilde{K}}=\rank_R\widehat{\Omega}_{(R/P)/K}.$$
Since $\widehat{\Omega}_{(\widetilde{R}/P)/\widetilde{K}}$ is generated by $dx_1,\dots,dx_n$, any $d$ general linear combinations of them will be a basis for $\widehat{\Omega}_{(\widetilde{R}/P)/\widetilde{K}}\otimes_{\widetilde{R}/P}\kappa(P)$, where $\kappa(P)$ denotes the residue field at $P$ (which is the fraction field of $\widetilde{R}/P$). It follows that $dy_{n-d+1},\dots,dy_n$ is a basis for $\widehat{\Omega}_{(\widetilde{R}/P)/\widetilde{K}}\otimes_{\widetilde{R}/P}\kappa(P)$ for every minimal prime $P$. Therefore $\widetilde{K}[[y_{n-d+1},\dots,y_n]]\to \widetilde{R}$ is generically \'etale.

Let $A=\widetilde{K}[[y_{n-d+1},\dots,y_n]]$. Since $A\to \widetilde{R}$ is module-finite and generically \'etale, so is $A^{1/p^e}\to \widetilde{R}\otimes_AA^{1/p^e}$ for any $e$. Since $\widetilde{R}\otimes_AA^{1/p^e}$ is purely inseparable over $\widetilde{R}$, $\widetilde{R}\otimes_AA^{1/p^e}$ is equidimensional. Moreover, we have $\widetilde{R}\otimes_AA^{1/p^e}\cong A^{1/p^e}[\widetilde{R}]\subseteq \widetilde{R}^{1/p^e}$ (see \cite[Lemma 6.4]{HochsterHuneke1}) and thus $\widetilde{R}\otimes_AA^{1/p^e}$ is reduced. Now we apply \autoref{thm.LipmanSathaye} to $S=\widetilde{R}\otimes_AA^{1/p^e}$ and $B=A^{1/p^e}$, note that $\widetilde{R}^{1/p^e}$ is contained in the normalization of $\widetilde{R}\otimes_AA^{1/p^e}$ and $J_{(\widetilde{R}\otimes_AA^{1/p^e})/A^{1/p^e}}=J_{\widetilde{R}/A}\cdot (\widetilde{R}\otimes_AA^{1/p^e})$ by base change (they have the same presentation), we have $$J_{\widetilde{R}/A}\widetilde{R}^{1/p^e}\subseteq \widetilde{R}\otimes_AA^{1/p^e}.$$
Since we have a presentation $\widetilde{R}=\frac{A[y_1,\dots,y_{n-d}]}{(g_1,\dots,g_t)}$, it follows from the definition that
\begin{equation}
\label{eqn:c}
c=|\frac{\partial g_i}{\partial y_j}|_{\substack{1\leq i\leq n-d \\ 1\leq j\leq n-d}}\in J_{\widetilde{R}/A}, \text{ in particular } c\widetilde{R}^{1/p^e}\subseteq \widetilde{R}\otimes_AA^{1/p^e}.
\end{equation}

At this point we mimic the strategy of the proof of \autoref{prop.BL-2ndproof}. We consider the following two short exact sequences:
$$0\to \widetilde{R}\otimes A^{1/p^e}\to \widetilde{R}^{1/p^e}\to C_e\to 0$$
$$0\to \widetilde{R}^{1/p^e}\xrightarrow{\cdot c}\widetilde{R}\otimes A^{1/p^e}\to D_e\to 0.$$
It follows from \autoref{eqn:c} that $C_e$ and $D_e$ are annihilated by $c$ for any $e$, and $\mu(C_e)\leq \mu(\widetilde{R}^{1/p^e})$, $\mu(D_e)\leq \mu(\widetilde{R}\otimes A^{1/p^e})$. Now we tensor the two short exact sequences with $R/I$ for any $\m$-primary ideal $I$. Since $A$ is a power series ring over $\widetilde{K}$, $A^{1/p^e}$ is a finite free $A$-module of rank $p^{e\gamma}$ where $\gamma=d+\log_p[\widetilde{K}^{1/p}:\widetilde{K}]$ ($=d+n^2+t^2$), we have
$$(\widetilde{R}/I\widetilde{R})^{p^{e\gamma}}\to \widetilde{R}^{1/p^e}/I\widetilde{R}^{1/p^e}\to C_e/IC_e\to 0$$
$$\widetilde{R}^{1/p^e}/I\widetilde{R}^{1/p^e}\to(\widetilde{R}/I\widetilde{R})^{p^{e\gamma}}\to D_e/ID_e\to 0.$$
Computing length, we know that
$$p^{e\gamma}\length(\widetilde{R}/I\widetilde{R})-\length(D_e/ID_e)\leq \length(\widetilde{R}^{1/p^e}/I\widetilde{R}^{1/p^e})\leq p^{e\gamma}\length(\widetilde{R}/I\widetilde{R})+\length(C_e/IC_e),$$
where all the lengths are computed over $\widetilde{R}$. It follows that
$$p^{e\gamma}\length(\widetilde{R}/I\widetilde{R})-p^{e\gamma}\length(\widetilde{R}/(I,c)\widetilde{R})\leq \length(\widetilde{R}^{1/p^e}/I\widetilde{R}^{1/p^e}) \leq p^{e\gamma}\length(\widetilde{R}/I\widetilde{R})+\mu(\widetilde{R}^{1/p^e})\length(\widetilde{R}/(I,c)\widetilde{R}).$$
Dividing the above by $p^{e\gamma}$, we get that for any $\m$-primary $I\subseteq R$ and any $e$, we have
\begin{equation*}
\length(\widetilde{R}/I\widetilde{R})\left(1-\frac{\length(\widetilde{R}/(I,c)\widetilde{R})}{\length(\widetilde{R}/I\widetilde{R})}\right)\leq \frac{\length(\widetilde{R}^{1/p^e}/I\widetilde{R}^{1/p^e})}{p^{e\gamma}}\leq \length(\widetilde{R}/I\widetilde{R})\left(1+\frac{\mu(\widetilde{R}^{1/p^e})}{p^{e\gamma}}
\cdot\frac{\length(\widetilde{R}/(I,c)\widetilde{R})}{\length(\widetilde{R}/I\widetilde{R})}\right).
\end{equation*}
Taking limit as $e\to\infty$, we have
$$\length(\widetilde{R}/I\widetilde{R})\left(1-\frac{\length(\widetilde{R}/(I,c)\widetilde{R})}{\length(\widetilde{R}/I\widetilde{R})}\right)\leq
\ehk(I)\leq \length(\widetilde{R}/I\widetilde{R})\left(1+\ehk(R)\frac{\length(\widetilde{R}/(I,c)\widetilde{R})}{\length(\widetilde{R}/I\widetilde{R})}\right).$$
Since $I\subseteq R$, $\length_{\widetilde{R}}(\widetilde{R}/I\widetilde{R})=\length_R(R/I)$. Therefore the theorem will be proved once we established the following claim.
\begin{claim}
\label{clm.Lengthmodc}
There exists $N\gg0$ such that if $\length_R(R/I)=\length_{\widetilde{R}}(\widetilde{R}/I\widetilde{R})>N$, then $$\frac{\length(\widetilde{R}/(I,c)\widetilde{R})}{\length(\widetilde{R}/I\widetilde{R})}<\varepsilon'=\frac{\varepsilon}{\ehk(R)}.$$
\end{claim}
\begin{proof}[Proof of Claim]
Let $\overline{I}$ be the integral closure of $I$ in $R$. Since $(\overline{I},c)\widetilde{R}$ is clearly integral over $(I, c)\widetilde{R}$, by \cite[Corollary 4.2]{KMQSY} there exists a constant $D$ such that $\length(\widetilde{R}/(I,c)\widetilde{R})\leq D\length(\widetilde{R}/(\overline{I},c)\widetilde{R})$. Hence
$$\frac{\length(\widetilde{R}/(I,c)\widetilde{R})}{\length(\widetilde{R}/I\widetilde{R})}\leq D\frac{\length(\widetilde{R}/(\overline{I},c)\widetilde{R})}{\length(\widetilde{R}/\overline{I}\widetilde{R})}.$$
Thus by replacing $\varepsilon'$ by $\varepsilon'/D$, we may assume that $I$ is an $\m$-primary and integrally closed ideal of $R$. Consider the exact sequence
$$0\to\frac{I\widetilde{R}:c}{I\widetilde{R}}\to \widetilde{R}/I\widetilde{R}\xrightarrow{\cdot c} \widetilde{R}/I\widetilde{R}\to\widetilde{R}/(I,c)\widetilde{R}\to0,$$
we see that $$\length(\widetilde{R}/(I, c)\widetilde{R})=\length(\frac{I\widetilde{R}:c}{I\widetilde{R}}).$$
Now we claim that for any divisorial valuation $v$ of $R$ centered at $\m$, we have $\widetilde{v}(c)=\widetilde{v}(J\widetilde{R})$ where $\widetilde{v}$ is the extension of $v$ to $\widetilde{R}$. To see this, note that since $(g_1,\dots,g_t)$ is a general (over $R$) linear combinations of $(f_1,\dots,f_t)$, $c=|\frac{\partial g_i}{\partial y_j}|_{\substack{1\leq i\leq n-d \\1\leq j\leq n-d}}$ is a general linear combination of
$|\frac{\partial f_i}{\partial y_j}|_{\substack{i_1,\dots,i_{n-d}\\ 1\leq j\leq n-d}}$ for all $1\leq i_1<i_2<\cdots<i_{n-d}\leq t$. Therefore we have
$$\widetilde{v}(c)=\min_{i_1,\dots,i_{n-d}}\{\widetilde{v}(|\frac{\partial f_i}{\partial y_j}|_{\substack{i_1,\dots,i_{n-d}\\ 1\leq j\leq n-d}})\}.$$ But $y_1,\dots,y_n$ are general (over $R$) linear combinations of $x_1,\dots,x_n$, so each $|\frac{\partial f_i}{\partial y_j}|_{\substack{i_1,\dots,i_{n-d}\\ 1\leq j\leq n-d}}$ is a general linear combination of $|\frac{\partial f_i}{\partial x_j}|_{\substack{i_1,\dots,i_{n-d}\\ j_1,\dots,j_{n-d}}}$ for all $1\leq j_1<j_2<\cdots<j_{n-d}\leq n$. Thus
$$\widetilde{v}(|\frac{\partial f_i}{\partial y_j}|_{\substack{i_1,\dots,i_{n-d}\\ 1\leq j\leq n-d}})=\min_{j_1,\dots,j_{n-d}}\{\widetilde{v}(|\frac{\partial f_i}{\partial x_j}|_{\substack{i_1,\dots,i_{n-d}\\ j_1,\dots,j_{n-d}}})\}=\min_{j_1,\dots,j_{n-d}}\{v(|\frac{\partial f_i}{\partial x_j}|_{\substack{i_1,\dots,i_{n-d}\\ j_1,\dots,j_{n-d}}})\}.$$
Putting these together we see that
$$\widetilde{v}(c)=\min_{\substack{i_1,\dots,i_{n-d} \\ j_1,\dots,j_{n-d}}}\{ v(|\frac{\partial f_i}{\partial x_j}|_{\substack{i_1,\dots,i_{n-d}\\ j_1,\dots,j_{n-d}}})\}=v(J)=\widetilde{v}(J\widetilde{R}).$$

Finally, we note that for any $\m$-primary ideal $\mathfrak{a}\subseteq R$, $\overline{\mathfrak{a}}\widetilde{R}=\overline{\mathfrak{a}\widetilde{R}}$ by \cite[Lemma 8.4.2 (9) and Lemma 9.1.1]{HunekeSwanson}. In particular, $I\widetilde{R}$ is integrally closed and to check whether an element is in $I\widetilde{R}$ via the valuation criterion, it suffices to use divisorial valuations coming from $R$ (i.e., the Rees valuations of $I\widetilde{R}$ are extended from the Rees valuations of $I$). Since $\widetilde{v}(c)=\widetilde{v}(J\widetilde{R})$, by the valuation criterion for integral closure, we have $$I\widetilde{R}:c=I\widetilde{R}:J\widetilde{R}.$$
Since $J\widetilde{R}$ is $\m_{\widetilde{R}}$-primary, by \autoref{cor.ControlColon}, we know that there exists $N\gg0$ such that for any $I\subseteq R$ with $\length(\widetilde{R}/I\widetilde{R})>N$, we have
$$\frac{\length(\widetilde{R}/(I, c)\widetilde{R})}{\length(\widetilde{R}/I\widetilde{R})}=\frac{\length((I\widetilde{R}:c)/{I\widetilde{R}})}{\length(\widetilde{R}/I\widetilde{R})}
=\frac{\length((I\widetilde{R}:J\widetilde{R})/{I\widetilde{R}})}{\length(\widetilde{R}/I\widetilde{R})}<\varepsilon'.$$
This finishes the proof. \qedhere
\end{proof}
\end{proof}

As a consequence of \autoref{thm.mainHK}, we prove \autoref{conj.asymptoticLech} part (a) in characteristic $p>0$.

\begin{corollary}
\label{cor.mainIsolatedSing}
Let $(R,\m)$ be a Noetherian local ring of characteristic $p>0$ and dimension $d\geq 1$ with $K=R/\m$ perfect. Suppose $\widehat{R}$ has an isolated singularity. Then for every $\varepsilon>0$, there exists $N\gg0$ such that for any $\m$-primary ideal $I$ with $\length(R/I)>N$, we have $\ehk(I)\leq (1+\varepsilon)\length(R/I).$ As a consequence, $\eh(I)\leq d!(1+\varepsilon)\length(R/I).$
\end{corollary}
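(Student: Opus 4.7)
The plan is to reduce \autoref{cor.mainIsolatedSing} to \autoref{thm.mainHK} by replacing $R$ with a suitable reduced and equidimensional quotient. After passing to completion we may assume $R$ is complete with isolated singularity. Let $P_1,\dots,P_n$ denote the minimal primes of $R$ with $\dim R/P_i=d$, set $J=P_1\cap\cdots\cap P_n$, and put $S=R/J$. Since the nilradical $\sqrt{0}=\bigcap_{P\text{ min}} P$ is contained in $J$, the ring $S$ is reduced, and by construction it is equidimensional of dimension $d$. The crucial observation is that $S$ still has an isolated singularity: for any non-maximal prime $\widetilde{Q}\supseteq J$ of $R$, the ring $R_{\widetilde{Q}}$ is a regular local domain by hypothesis; since $\widetilde{Q}$ is prime and contains $J=\bigcap P_i$, it must contain some $P_i$, and because $R_{\widetilde{Q}}$ is a domain this $P_i$ is the unique minimal prime of $R$ contained in $\widetilde{Q}$, forcing $JR_{\widetilde{Q}}=P_iR_{\widetilde{Q}}=0$. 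Hence $S_{\widetilde{Q}/J}=R_{\widetilde{Q}}$ is regular, so \autoref{thm.mainHK} applies to $S$.

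Next, invoke the associativity formula for Hilbert--Kunz multiplicity
$$\ehk(I,R)=\sum_{\substack{P\text{ min}\\ \dim R/P=d}}\length(R_P)\,\ehk(I,R/P).$$
Since $d\geq 1$, every minimal prime $P$ of $R$ is strictly below $\m$, so by the isolated singularity hypothesis $R_P$ is a regular local ring of dimension $0$, i.e.\ a field; therefore $\length(R_P)=1$. The same formula applied to $S$ (whose top-dimensional minimal primes are exactly the $P_i/J$) gives $\ehk(IS,S)=\sum_i\ehk(I,R/P_i)$, so $\ehk(I,R)=\ehk(IS,S)$. Moreover $\length(S/IS)\leq\length(R/I)$ since $S$ is a quotient of $R$.

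Finally, apply \autoref{thm.mainHK} to $S$ with the given $\varepsilon$: there exists $N_S$ such that $\length(S/\mathfrak{a})>N_S$ implies $\ehk(\mathfrak{a},S)\leq(1+\varepsilon)\length(S/\mathfrak{a})$. Split into two cases. If $\length(S/IS)>N_S$ then $\ehk(I,R)=\ehk(IS,S)\leq(1+\varepsilon)\length(S/IS)\leq(1+\varepsilon)\length(R/I)$. If $\length(S/IS)\leq N_S$, combining \autoref{thm.Lech} with $\ehk\leq\eh$ gives the uniform bound $\ehk(I,R)\leq\eh(IS,S)\leq d!\,\eh(S)\,N_S=:C$, independent of $I$; hence any $I$ with $\length(R/I)>C$ automatically satisfies $\ehk(I,R)\leq C\leq\length(R/I)\leq(1+\varepsilon)\length(R/I)$. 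Taking $N=\max(N_S,C)$ handles both cases, and the consequence $\eh(I)\leq d!(1+\varepsilon)\length(R/I)$ follows at once from $\eh(I)\leq d!\ehk(I)$. I expect the main conceptual obstacle to be verifying that the quotient $S$ still has isolated singularity; the rest is bookkeeping with the associativity of $\ehk$ and classical Lech.
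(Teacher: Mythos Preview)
Your argument is correct and follows essentially the same route as the paper: pass to completion, replace $R$ by $S=R/(P_1\cap\cdots\cap P_n)$, apply \autoref{thm.mainHK} to $S$, and handle the bounded-colength case by a uniform estimate. Your treatment is in fact more detailed than the paper's in two places: you spell out why $S$ inherits the isolated singularity (the paper simply asserts this), and you justify $\ehk(I,R)=\ehk(IS,S)$ via associativity together with $\length(R_{P_i})=1$, whereas the paper invokes the slogan that Hilbert--Kunz multiplicity ignores lower-dimensional components. For the bounded case you use Lech's inequality $\eh(IS,S)\le d!\,\eh(S)\,\length(S/IS)$, while the paper uses the sharper $\ehk(IS)\le\ehk(S)\,\length(S/IS)$; either yields a finite constant, so this is immaterial.

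One small logical slip: the sentence ``since the nilradical $\sqrt{0}=\bigcap_{P\text{ min}}P$ is contained in $J$, the ring $S$ is reduced'' does not actually prove reducedness (the nilradical is contained in \emph{every} ideal of a reduced ring, yet quotients need not be reduced). The correct reason, which you surely intend, is that $J=P_1\cap\cdots\cap P_n$ is an intersection of prime ideals and hence radical, so $R/J$ is reduced. With that fix the proof is complete.
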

\begin{proof}
The second conclusion follows from the first by the inequality $\eh(I)\leq d!\ehk(I)$. To prove the inequality on Hilbert--Kunz multiplicity, we deduce it from \autoref{thm.mainHK} by removing the reduced and equidimensional hypothesis on $\widehat{R}$.

We can write a primary decomposition of $0$ in $\widehat{R}$ in the following form
\[
0 = P_1 \cap \cdots P_n \cap Q_1 \cap \cdots \cap Q_m \cap J
\]
where $P_i$ are minimal primes of dimension $d$, $Q_i$ are minimal primes of lower dimension,
and $J$ is the embedded component. Note that $J$ is necessarily $\mf m$-primary since $\widehat{R}$ has an isolated singularity. It follows that
$S = \widehat{R}/(P_1 \cap \cdots \cap P_n)$ is reduced, equidimensional, and has an isolated singularity.
Thus \autoref{thm.mainHK} can be applied to $S$. Let $N(S)$ be the number that works for $\varepsilon$ for $S$.

Now for any $\mf m$-primary ideal $I\subseteq R$ we have $\length (S/IS) \leq \length (\widehat{R}/I\widehat{R}) =\length (R/I)$ and $\ehk(IS) = \ehk(I)$ because Hilbert--Kunz multiplicity does not see lower-dimensional components.
Now if $\length (S/IS) > N(S)$ then
\[
\ehk(I) \leq (1 + \varepsilon) \length(S/IS) \leq (1 + \varepsilon) \length(R/I).
\]
Otherwise, we have that
$$\ehk(I) = \ehk(IS) \leq \ehk(S) \length(S/IS) \leq \ehk(R)N(S).$$
Hence the assertion holds for $N = \ehk(R)N(S)/(1 + \varepsilon)$.
\end{proof}

The following partial result on \autoref{conj.asymptoticLecheHK} is immediate.
\begin{corollary}
\label{cor.maineHK}
Let $(R,\m)$ be a Noetherian local ring of characteristic $p>0$ and dimension $d\geq 1$ with $K=R/\m$ perfect. Suppose $\widehat{R}$ has an isolated singularity and that $\eh(\red{\widehat{R}})>1$.
Then
\[
\lim_{N\to\infty} \sup_{\substack{\sqrt{I}=\m \\ \length(R/I)> N}} \left\{\frac{\eh(I)}{d!\length(R/I)} \right\}<\ehk(R)\leq \eh(R).
\]
\end{corollary}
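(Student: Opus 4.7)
My plan is to derive this corollary by combining \autoref{cor.mainIsolatedSing} with the classical characterization of regularity via Hilbert--Kunz multiplicity. The argument splits cleanly into two parts: an upper bound on the limiting supremum coming from the main theorem, and the strict inequality $\ehk(R) > 1$ coming from the structural hypothesis $\eh(\red{\widehat R}) > 1$. The inequality $\ehk(R) \leq \eh(R)$ on the far right is just the second half of the inequality \cite[(2.4)]{WatanabeYoshida} already recalled in the introduction, so there is nothing to do there.

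For the upper bound on the limit, I will combine the inequality $\eh(I) \leq d!\,\ehk(I)$ with \autoref{cor.mainIsolatedSing}. Given $\varepsilon > 0$, the corollary produces $N \gg 0$ such that for every $\m$-primary $I$ with $\length(R/I) > N$, one has
\[
\frac{\eh(I)}{d!\length(R/I)} \;\leq\; \frac{\ehk(I)}{\length(R/I)} \;\leq\; 1+\varepsilon.
\]
Taking the supremum over such $I$ and then the limit as $N \to \infty$ immediately shows that the limit in question is at most $1$.

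The substantive content is to verify $\ehk(R) > 1$, and for this I mimic the reduction performed at the end of the proof of \autoref{cor.mainIsolatedSing}. Passing to the completion and writing a primary decomposition $0 = P_1 \cap \cdots \cap P_n \cap Q_1 \cap \cdots \cap Q_m \cap J$ in $\widehat R$, with the $P_i$ the $d$-dimensional minimal primes, the $Q_j$ the lower-dimensional minimal primes, and $J$ the embedded component (necessarily $\m$-primary because $\widehat R$ has an isolated singularity), set $S = \widehat R / (P_1 \cap \cdots \cap P_n)$. Then $S$ is reduced, equidimensional, and has an isolated singularity. Because Hilbert--Kunz multiplicity ignores components of dimension $< d$ and $\m$-primary embedded components, $\ehk(R) = \ehk(S)$; because Hilbert--Samuel multiplicity also only sees top-dimensional components, $\eh(\red{\widehat R}) = \eh(S)$. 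The hypothesis $\eh(\red{\widehat R}) > 1$ therefore forces $\eh(S) > 1$, so $S$ is not regular. Then the Watanabe--Yoshida characterization (an unmixed local ring with $\ehk = 1$ is regular) gives $\ehk(S) > 1$, hence $\ehk(R) > 1$.

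I do not anticipate any genuine obstacle here; the analytic heavy lifting has already been carried out in Theorem B and the present argument is essentially bookkeeping: the first step transports the uniform bound of \autoref{cor.mainIsolatedSing} to the Hilbert--Samuel quantity, and the second step translates the non-trivial multiplicity assumption into non-regularity of the reduced equidimensional quotient $S$. Combining the two gives $\lim_{N \to \infty} \sup \leq 1 < \ehk(R) \leq \eh(R)$, as desired.
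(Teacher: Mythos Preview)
Your proposal is correct and follows essentially the same approach as the paper: use \autoref{cor.mainIsolatedSing} to bound the limit by $1$, and then invoke the Watanabe--Yoshida characterization to show $\ehk(R)>1$. The only cosmetic difference is that you pass to the reduced equidimensional quotient $S$ and apply Watanabe--Yoshida there, whereas the paper argues the contrapositive directly (if $\ehk(R)=1$ then there is a unique top-dimensional minimal prime $P$ with $\widehat{R}/P$ regular, forcing $\eh(\red{\widehat{R}})=1$); these are the same argument unwound in opposite directions.
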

\begin{proof}
\autoref{cor.mainIsolatedSing} implies that the left hand side is less than $1+\varepsilon$ for every $\varepsilon$. Therefore it is enough to show that $\ehk(R)\neq 1$. But if $\ehk(R)= 1$, then $\widehat{R}/P$ is regular for the (necessarily unique) minimal prime of $\widehat{R}$ of dimension $d$. Therefore $\eh(\red{\widehat{R}})=1$ which is a contradiction.
\end{proof}

\begin{remark}
Let $R$ be as in \autoref{cor.maineHK}, if $d\geq 2$, then we actually have $\sup_{{\sqrt{I}=\m}} \left\{\frac{\eh(I)}{d!\length(R/I)} \right\}<\ehk(R)$. This is because by \autoref{cor.maineHK}, we know there exists $\varepsilon>0$ and $N>0$ such that $\frac{\eh(I)}{d!\length(R/I)}\leq \ehk(R)-\varepsilon$ for all $I$ such that $\length(R/I)>N$. On the other hand, if $\length(R/I)\leq N$, then the set $\left\{\frac{\eh(I)}{d!\length(R/I)} \right\}_{\length(R/I)\leq N}$ is a finite set of rational numbers, each one is strictly less than $\frac{\ehk(I)}{\length(R/I)}$ by \cite[Theorem 2.2]{Hanes}, and hence strictly less than $\ehk(R)$ by \cite[Lemma 4.2]{WatanabeYoshida}. Thus by shrinking $\varepsilon$ if necessary, $\sup_{{\sqrt{I}=\m}} \left\{\frac{\eh(I)}{d!\length(R/I)} \right\}\leq \ehk(R)-\varepsilon<\ehk(R)$. One can also show that in this case $\sup_{{\sqrt{I}=\m}} \left\{\frac{\eh(I)}{d!\length(R/I)} \right\}$ is attained for some $\m$-primary ideals $I$.
\end{remark}

We next give examples to indicate the sharpness of our result. We present a counter-example to the statement of \autoref{conj.asymptoticLech} (a) without the isolated singularity assumption. We recall a simple lemma.

\begin{lemma}
Let $(R, \mf m)$ be a Noetherian local ring of dimension $d$ and let $S = R[[T]]$.
For an $\mf m$-primary ideal $I$ let $J = IS + T^{L}S$.
Then $\eh(J) = L \eh(I)$.
\end{lemma}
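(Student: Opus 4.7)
The plan is to compute $\length_S(S/J^n)$ explicitly as a polynomial in $n$ and read off its leading coefficient. Note that $\dim S = d+1$ and that $J$ is primary to $\n = \m S + TS$, since $T^L \in J$ and $I$ is $\m$-primary. Expanding,
$$
J^n \;=\; \sum_{k=0}^{n} I^{n-k} T^{Lk} S,
$$
where by convention $I^{j} = R$ for $j \leq 0$.

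The key observation is that a power series $f = \sum_{i\geq 0} a_i T^i$ lies in $J^n$ if and only if $a_i \in I^{n - \lfloor i/L \rfloor}$ for every $0 \leq i < Ln$ (and $a_i$ is arbitrary for $i \geq Ln$); this is a direct check on $T$-coefficients using the expansion above. Consequently, the $T$-adic filtration $M_i = (T^i S + J^n)/J^n$ on $S/J^n$ has graded pieces
$$
M_i/M_{i+1} \;\cong\; R/I^{n - \lfloor i/L \rfloor} \quad (0 \leq i < Ln), \qquad M_i = 0 \ (i \geq Ln).
$$
Because every composition factor equals $S/\n = R/\m$, the $S$- and $R$-lengths agree, so grouping the $L$ consecutive values of $i$ for which $\lfloor i/L\rfloor$ is constant gives
$$
\length_S(S/J^n) \;=\; \sum_{i=0}^{Ln-1} \length_R\bigl(R/I^{n - \lfloor i/L\rfloor}\bigr) \;=\; L \sum_{j=1}^{n} \length_R(R/I^j).
$$

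Substituting the Hilbert--Samuel polynomial $\length_R(R/I^j) = \tfrac{\eh(I)}{d!} j^d + O(j^{d-1})$ and summing from $1$ to $n$ yields $\length_S(S/J^n) = \tfrac{L\, \eh(I)}{(d+1)!} n^{d+1} + O(n^d)$, whence $\eh(J) = L\, \eh(I)$ from the definition of Hilbert--Samuel multiplicity in dimension $d+1$. The only non-routine step is the filtration description of $S/J^n$; once the explicit expansion of $J^n$ is recorded, the identification of the graded pieces is a small bookkeeping argument on $T$-coefficients, and the final summation is a polynomial identity.
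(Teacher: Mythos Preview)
Your proof is correct and follows essentially the same route as the paper: both arguments expand $J^n$ in $T$-coefficients, identify the coefficient of $T^i$ as lying in $I^{n-\lfloor i/L\rfloor}$, obtain $\length(S/J^n)=L\sum_{j=1}^{n}\length(R/I^j)$, and read off the multiplicity from the leading term. Your write-up is a bit more detailed about the filtration step, but there is no substantive difference in strategy.
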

\begin{proof}
One may compute a power of $J$ as follows
\begin{align*}
J^n &= I^n + I^nT + I^{n}T^2 + I^{n}T^3 + \cdots + I^{n}T^{L-1}
 + I^{n-1}T^L+ I^{n-1}T^{L+1} + \cdots + I^{n-1}T^{2L - 1} + \\
&+ I^{n-2}T^{2L} + \cdots + I^{n-2}T^{3L - 1} + \cdots + IT^{(n-1)L} + \cdots IT^{nL - 1} + T^{nL} + \cdots.
\end{align*}
Thus we have
\[
\length (S/J^n) = L \sum_{k = 0}^n \length (R/I^k)
\]
and $\lim_{n \to \infty} \frac{(d+1)!\length (S/J^n)}{n^{d+1}} = L \eh(I)$.
\end{proof}

\begin{example}
\label{example}
Let $(R, \mf m)$ be a Noetherian complete local ring of dimension $d\geq 1$ such that $\eh(R) \geq 2(d+1)!$.
Consider the ring $S = R[[T]]$ and let
\[
J = J(N, L) := (\mf m^N + \mf m^{N - 1}T + \cdots + \m T^{N-1} + \mf m T^N + \cdots + \mf mT^{L-1} + T^{L})S.
\]
Observe that $J$ is an $(\m, T)$-primary ideal in $S$ and $J \subseteq (\mf m, T)^N$, so in particular $\length(S/J)\geq N$. We now estimate its colength and multiplicity as
\[
\eh(J) \geq \eh(\mf m + \mf mT + \cdots + \mf m T^{L-1} + T^{L}) = L \eh(R)
\]
and
\[
\length (S/J) = \length (R/\mf m^N) + \cdots + \length (R/\mf m^2) +L-N+1 \ll 2L \hspace{1em} \text{ for } \hspace{1em} L\gg0.
\]
Now, for any $\varepsilon > 0$ and any $N$, by taking $L \gg 0$ one will get that
\[
\eh(J)\geq L\eh(R) \geq 2L(d+1)! \geq (d+1)! (1 + \varepsilon) \length (S/J).
\]
So the upper bound in \autoref{thm.mainHK} (and \autoref{cor.mainIsolatedSing}) cannot hold in general. However, note that such $S$ cannot be an isolated singularity.
\end{example}

\begin{remark}
We observe that the lower bound in \autoref{thm.mainHK} also cannot hold in general. In \cite[Example 4.6]{Klein}, Klein gives an example of a four-dimensional normal local ring $R$ such that there exist a sequence of parameter ideals $I_n \subseteq \mf m^n$ such that
\[
\lim_{n \to \infty} \frac{\length (R/I_n)}{\eh (I_n)} \neq 1.
\]
Since Hilbert--Kunz and Hilbert--Samuel multiplicities agree for parameter ideals, $R$ cannot satisfy the conclusion of \autoref{thm.mainHK}. The $R$ constructed in \cite[Example 4.6]{Klein} is not Cohen--Macaulay on the punctured spectrum, and hence cannot be an isolated singularity.
\end{remark}

We end this section by proving the one-dimensional case of \autoref{conj.asymptoticLech} (a) in all characteristics. This will be used in the next section.

\begin{proposition}
\label{prop.dim one}
Let $(R,\m)$ be a Noetherian local ring of dimension one. Suppose $\widehat{R}$ has an isolated singularity. Then for every $\varepsilon>0$, there exists $N\gg0$ such that for any $\m$-primary ideal $I$ with $\length(R/I)>N$, we have $\eh(I)\leq (1+\varepsilon)\length(R/I).$
\end{proposition}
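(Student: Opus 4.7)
The plan is to reduce to a complete, reduced, one-dimensional local ring and then compare $\eh(I)$ with $\length(R/I)$ directly via the normalization.

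First, I pass to $\widehat{R}$, which preserves colengths, multiplicities, and the set of $\m$-primary ideals. In dimension one, the isolated singularity assumption means $\widehat{R}_P$ is a field for each minimal prime $P$, so the nilradical $N=\sqrt{0}$ has support contained in $\{\m\}$ and is therefore of finite length. The short exact sequence $0\to N/(N\cap I)\to \widehat{R}/I\to (\widehat{R}/N)/I(\widehat{R}/N)\to 0$ shows that colengths change by at most $\length(N)$, while the multiplicity is unchanged (the nilradical does not contribute, being supported at $\m$). Hence, after replacing $R$ by $\widehat{R}/N$, we may assume $R$ is complete, reduced, and one-dimensional.

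Let $R'$ denote the normalization; it is a finite $R$-module and a product of complete DVRs $R'_1,\dots,R'_k$. Write $\delta=\length_R(R'/R)<\infty$ and let $\mathfrak{c}=R:_R R'$ denote the conductor, an $\m$-primary ideal. I would establish the key identity
\[
\eh(I)=\length_R(R'/IR')\qquad\text{for every $\m$-primary }I\subseteq R
\]
as follows. Since $R$ is analytically unramified, $\eh(I)=\lim_n\length(R/\overline{I^n})/n$. Any $\m$-primary ideal $J\subseteq R$ satisfies $\overline{J}=JR'\cap R$: every ideal of the Dedekind ring $R'$ is integrally closed, so this follows by the valuative criterion for integral closure. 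For $n\gg 0$ we have $I^n\subseteq\mathfrak{c}$, so $I^nR'\subseteq\mathfrak{c}R'=\mathfrak{c}\subseteq R$, giving $\overline{I^n}=I^nR'$. From the short exact sequence $0\to R/I^nR'\to R'/I^nR'\to R'/R\to 0$ together with the elementary identity $\length_R(R'/I^nR')=n\cdot\length_R(R'/IR')$ in the product of DVRs, we conclude $\length(R/\overline{I^n})=n\cdot\length_R(R'/IR')-\delta$; dividing by $n$ and letting $n\to\infty$ yields the formula.

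Finally, for any $\m$-primary $I$, the natural $R$-linear map $R/I\to R'/IR'$ has kernel $(IR'\cap R)/I=\overline{I}/I$ and image $(R+IR')/IR'$, so
\[
\length(R/I)=\length(\overline{I}/I)+\length\bigl((R+IR')/IR'\bigr),
\]
and $\length(R'/IR')=\length\bigl((R+IR')/IR'\bigr)+\length\bigl(R'/(R+IR')\bigr)$. Subtracting and using that $R'/(R+IR')$ is a quotient of $R'/R$ gives
\[
\eh(I)-\length(R/I)=\length\bigl(R'/(R+IR')\bigr)-\length(\overline{I}/I)\leq \delta.
\]
Thus $\eh(I)\leq\length(R/I)+\delta$ holds uniformly, and choosing $N>\delta/\varepsilon$ yields $\eh(I)\leq(1+\varepsilon)\length(R/I)$ whenever $\length(R/I)>N$. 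The main technical step is establishing the identity $\eh(I)=\length_R(R'/IR')$ for \emph{every} $\m$-primary $I$ (not merely those inside the conductor), which rests on the good behaviour of integral closure in the one-dimensional reduced setting; once this is in hand, the remainder of the argument is bookkeeping bounded by the single conductor constant $\delta$.
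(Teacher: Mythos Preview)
Your proposal is correct and follows essentially the same approach as the paper: reduce to the complete reduced case, pass to the normalization $R'$ (a finite product of DVRs), establish the identity $\eh(I)=\length_R(R'/IR')$, and bound $\eh(I)-\length(R/I)$ by the fixed constant $\delta=\length_R(R'/R)$. The paper derives $\eh(I)=\length_R(R'/IR')$ more directly via the associativity formula and the equality $\length_S(S_i/IS_i)=\eh(IS_i,S_i)$ in each DVR factor, whereas you obtain it through the limit $\eh(I)=\lim_n \length(R/\overline{I^n})/n$ together with $\overline{I^n}=I^nR'$ for $n\gg 0$; both routes are standard and yield the same identity, and the remaining bookkeeping is identical in spirit.
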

\begin{proof}
We can pass to the completion to assume $R$ is complete. Since $R$ is a one-dimensional isolated singularity, the nilradical of $R$ has finite length.
Thus, replacing $R$ by $\red{R}$ will not affect $\eh(I)$ and will only drop $\length(R/I)$ (by at most the length of the nilradical). Therefore we can replace $R$ by $\red{R}$ to assume that $R$ is reduced.

Let $P_1, \ldots, P_n$ be the minimal primes of $R$ and note that $R \subseteq \prod_i R/P_i$.
Let $S$ be the integral closure of $R$ in its total quotient ring $\prod_i \text{Frac}(R/P_i)$. We write $S = \prod S_i$, where $S_i$ is the integral closure
of $R/P_i$ in its field of fractions. Since $R$ is complete it follows that $S_i$ is a DVR.

We have an exact sequence $0 \to R \to S \to C \to 0$ with $\dim C = 0$, which
for each $i$ specializes to an exact sequence
\[
0 \to R/P_i \to S_i \to C_i \to 0,
\]
where $C_i$ has finite length.
Let $r_i$ be the degree of the residue field of $S_i$ over $R/\m$. By the associativity formula for multiplicity, for any $\m$-primary ideal $I \subset R$ we have
\[
\eh(I) = \sum_i \eh(I, R/P_i) = \sum_i \eh(I, S_i) = \sum_i r_i\eh(IS_i, S_i).
\]
Since $S = \prod_{i = 1}^n S_i$, it follows that
$
\length_R (S/IS) = \sum\limits_{i = 1}^n \length_R (S_i/IS_i).
$
Since $S_i$ is a DVR, we have $\length_S (S_i/IS_i) = \eh (IS_i, S_i)$
and thus
\begin{eqnarray*}
\eh(I) &=& \sum_i r_i\eh(IS, S_i) = \sum_i r_i\length_S (S_i/IS_i) =\length_R(S/IS)\\
&\le& \length_R (R/I) + \length_R (C/IC) \le \length_R(R/I) + \length_R(C).
\end{eqnarray*}
Therefore we can pick $N\gg0$ such that $\varepsilon N\geq \length(C)$. For any $\m$-primary ideal $I$ such that $\length(R/I)>N$,
we then get
$
\eh(I) \leq \length(R/I) + \length(C) \leq (1 + \varepsilon) \length (R/I)
$ as desired.
\end{proof}

\section{Proof of Theorem A}

In this section we prove the remaining part of Theorem A regarding to \autoref{conj.asymptoticLech} part (b).
We begin by recalling \cite[Lemma~2.6]{HSV}, that was stated for $x$ being a regular element, but its proof applies for a weaker assumption.

\begin{lemma}\label{lem.induction inequalities}
Let $(R, \mf m)$ be a Noetherian local ring of dimension $d > 0$, $I$ be an $\mf m$-primary ideal,
$x$ be a parameter element, and $R' = R/xR$.
If $x \notin I$, then we have
\begin{enumerate}
\item $\length (R/I) = \length (R/(I:x)) + \length (R'/I)$,
\item $\eh(I) \leq \eh(I:x) + d \eh(IR')$.
\end{enumerate}
In particular,
\[
\frac{\eh(I)}{\length(R/I)} \leq  \max \left\{\frac{\eh(I:x)}{\length (R/(I:x))}, \frac{d\eh(IR')}{\length (R'/IR')} \right\}.
\]
\end{lemma}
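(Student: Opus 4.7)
The plan is to reduce everything to (2), which is the substantive assertion; part (1) follows immediately from the length-additive four-term exact sequence
\[
0 \to R/(I:x) \xrightarrow{\cdot x} R/I \to R'/IR' \to 0,
\]
and the ``in particular'' clause is just the mediant inequality $(a+c)/(b+e) \leq \max(a/b, c/e)$ applied to the numerators and denominators produced by (1) and (2). Both denominators are positive because $x \notin I$ forces $(I:x) \subsetneq R$, while $I + xR \subseteq \m \neq R$.

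For (2), I would apply (1) not to $I$ but to powers $I^n$ and read off leading asymptotics. Consider the ascending chain of colons
\[
I^n \subseteq (I^n:x) \subseteq (I^n:x^2) \subseteq \cdots \subseteq (I^n:x^n).
\]
Multiplication by $x$ on $R/(I^n:x^{k-1})$ is an endomorphism whose kernel is $(I^n:x^k)/(I^n:x^{k-1})$ and whose cokernel is $R'/(I^n:x^{k-1})R'$; since source and target have the same length, these two modules agree in length. Because $I^n \subseteq (I^n:x^{k-1})$, the cokernel is a quotient of $R'/I^nR'$, so each step has length at most $\length(R'/I^nR')$. Telescoping over $k = 1, \ldots, n$ yields
\[
\length\bigl((I^n:x^n)/I^n\bigr) \leq n\, \length(R'/I^nR').
\]
Combined with the elementary inclusion $(I:x)^n \subseteq I^n:x^n$ (which follows from $x(I:x) \subseteq I$ upon raising to the $n$-th power), one obtains
\[
\length(R/I^n) \leq \length(R/(I:x)^n) + n\, \length(R'/I^nR').
\]

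To finish (2), multiply by $d!/n^d$ and let $n \to \infty$: the left side tends to $\eh(I)$, and $d!\, \length(R/(I:x)^n)/n^d \to \eh(I:x)$. Using the parameter hypothesis, $\dim R' = d - 1$, so $\length(R'/I^nR')$ is asymptotic to $\eh(IR')\, n^{d-1}/(d-1)!$; hence $d! \cdot n\, \length(R'/I^nR')/n^d \to d\,\eh(IR')$, yielding (2). The main obstacle I anticipate is locating the correct telescoping: truncating the colon chain at $(I^n:x)$ only reproduces (1), while pushing it all the way up to $(I^n:x^n)$ introduces the extra factor of $n$ that, after normalization by $n^d$, becomes precisely the ``$d$'' appearing in (2).
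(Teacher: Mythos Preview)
Your argument is correct. Part (1) is the standard short exact sequence; the ``in particular'' clause is indeed just the mediant inequality, and your check that both denominators are positive is the right one. For (2), the telescoping of the colon chain $I^n \subseteq (I^n:x) \subseteq \cdots \subseteq (I^n:x^n)$ together with the inclusion $(I:x)^n \subseteq (I^n:x^n)$ gives exactly the inequality $\length(R/I^n) \leq \length(R/(I:x)^n) + n\,\length(R'/I^nR')$, and the asymptotics you quote are correct (the parameter hypothesis ensures $\dim R' = d-1$, which is what makes the factor $n$ turn into the factor $d$ in the limit).

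As for comparison: the paper does not actually prove this lemma. It is quoted from \cite[Lemma~2.6]{HSV}, with only the remark that the original statement assumed $x$ regular but that the proof goes through for any parameter element. Your write-up therefore supplies a self-contained argument where the paper defers to the literature; in particular, your use of the kernel--cokernel length equality for the endomorphism $\cdot\, x$ on the finite-length module $R/(I^n:x^{k-1})$ is precisely the step that does not require $x$ to be a nonzerodivisor, only that the module be of finite length, so your proof already works at the stated generality.
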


\autoref{lem.induction inequalities} gives us a powerful induction tool for Lech-type inequalities. It immediately provides an alternative, and simple proof of Lech's inequality. We will later generalize this strategy to prove our main result.

\begin{corollary}[\autoref{thm.Lech}]
Let $(R, \mf m)$ be a Noetherian local ring of dimension $d$.
Then for any $\mf m$-primary ideal $I$ we have $\eh(I) \leq d! \eh(R) \length (R/I)$.
\end{corollary}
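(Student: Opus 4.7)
The plan is to prove Lech's inequality by a double induction, primarily on $d = \dim R$ and secondarily on $\length(R/I)$. The base case $d = 0$ is immediate: $R$ is artinian, so $\eh(I) = \length(R) = \eh(R)$, and since $\length(R/I)\geq 1$ the inequality follows. For the inductive step I would pass to the faithfully flat extension $R[t]_{\m R[t]}$ (which preserves $\eh$ and $\length$) to assume the residue field is infinite. The inner base case $\length(R/I) = 1$ forces $I = \m$, giving $\eh(I) = \eh(R) \leq d!\,\eh(R)$ trivially.

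For the main step, given $I \subsetneq \m$ with $\length(R/I) \geq 2$, I would select a parameter $x \in \m$ that is superficial for $\m$ and outside $I$; such $x$ exists by prime avoidance and the infiniteness of the residue field. Applying \autoref{lem.induction inequalities} gives
\[
\frac{\eh(I)}{\length(R/I)} \leq \max\!\left\{\frac{\eh(I:x)}{\length(R/(I:x))},\; \frac{d\,\eh(IR')}{\length(R'/IR')}\right\},
\]
where $R' = R/xR$. Since $\m$ is the unique associated prime of $R/I$, every element of $\m$ (in particular $x$) is a zero-divisor modulo $I$, so $I:x \supsetneq I$ and $\length(R/(I:x)) < \length(R/I)$; the secondary induction then bounds the first ratio by $d!\,\eh(R)$. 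For the second ratio, $R'$ has dimension $d-1$, so the primary induction applied to $R'$ bounds $\eh(IR')/\length(R'/IR')$ by $(d-1)!\,\eh(R')$, giving at most $d!\,\eh(R')$ for the entire expression. Taking the maximum, it suffices to guarantee that $\eh(R') \leq \eh(R)$.

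The main obstacle is precisely this last inequality. For $d \geq 2$ it is the classical fact that a superficial element $x$ for $\m$ satisfies $\eh(\m, R/xR) = \eh(\m, R)$, so the argument closes at once. The subtlety concentrates in dimension one, where $R'$ is artinian and $\eh(R') = \length(R/xR) = \eh(\m) + \length(0:_R x)$, which can strictly exceed $\eh(R)$. To handle this I would first reduce to depth $R \geq 1$ by factoring out the $\m$-torsion submodule $H^0_{\m}(R)$: this passage preserves $\eh(\m, R)$ and $\eh(I)$ (the torsion part has dimension zero and contributes nothing for $d\ge 1$) while only decreasing $\length(R/I)$, so an inequality for the quotient transfers back to $R$. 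Once $R$ has positive depth, prime avoidance lets me choose $x$ to be simultaneously a nonzerodivisor, superficial for $\m$, and part of a minimal reduction of $\m$. For such $x$ in dimension one the ring $R$ is Cohen--Macaulay, so $\eh(R') = \length(R/xR) = \eh(xR, R) = \eh(\m, R) = \eh(R)$, and the induction closes.
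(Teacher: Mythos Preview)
Your argument is correct and matches the paper's: both run the double induction on $d$ and $\length(R/I)$ through \autoref{lem.induction inequalities} with a superficial element, after passing to infinite residue field. The one point of divergence is the $d=1$ base case. The paper treats it directly: after killing $\lc^0_\m(R)$ and reducing (via the associativity formula) to a one-dimensional complete local domain, it passes to the normalization $S$, a DVR, and uses $\eh(I)=\length_R(S/IS)\leq \length_R(S/\m S)\,\length(R/I)=\eh(R)\,\length(R/I)$. You instead keep the same inductive engine running: after killing $\lc^0_\m(R)$ so that $R$ is one-dimensional Cohen--Macaulay, you choose $x$ a minimal reduction of $\m$ outside $I$ and apply \autoref{lem.induction inequalities} once more, landing in dimension zero where $\eh(R')=\length(R/xR)=\eh(R)$. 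Your route is more uniform with the $d\geq 2$ step and avoids invoking normalization; the paper's is a clean standalone argument that does not need to verify the availability of $x\notin I$.
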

\begin{proof}
We may assume that the residue field is infinite by passing to $R(t)=R[t]_{\m R[t]}$.
We use induction on $d$, where the base case $d = 0$ is trivial.

If $d = 1$, we can replace $R$ by $\widehat{R}$ and then by $\widehat{R}/\lc_{\m}^0(R)$ to assume $R$ is complete and unmixed (this doesn't change the multiplicity and will only possibly decrease the colength). Using the associativity formula we then further reduce to the case in which $R$ is a
one-dimensional complete local domain. Let $S$ be the integral closure of $R$ in its fraction field. Then $S$ is a DVR. Let $r$ be the degree of the residue field of $S$ over $R/\m$. We have
$$\eh(I)=\eh(I, S)=r\eh(IS, S)=r\length_S(S/IS)=\length_R(S/IS).$$
In particular, we have $\eh(R)=\length_R(S/\m S)$. Now by taking a filtration of $I\subseteq R$ by $R/\m$ and base change to $S$, we have $$\eh(I)=\length_R(S/IS)\leq \length_R(S/\m S)\length_R(R/I)=\eh(R)\length_R(R/I).$$

For $d\geq 2$ we also use induction on $\length (R/I)$.
The inequality clearly holds for $I = \mf m$.
For arbitrary $I$ we use \autoref{lem.induction inequalities}
for a superficial element $x\in\mf m$ (thus $\eh(R) = \eh(R')$ by \cite[Proposition 8.5.7 and 11.1.9]{HunekeSwanson}).
Then $\eh(I:x)/\length (R/(I:x)) \leq d! \eh(R)$ by the induction on the colength
and $d\eh(IR')/\length (R'/IR') \leq d! \eh(R)$ by the induction on the dimension.
\end{proof}

Now we start the proof of \autoref{conj.asymptoticLech} (b) in equal characteristic. We first show that the ``if" direction hold, in arbitrary characteristic.
\begin{proposition}
\label{prop.if direction}
Let $(R,\m)$ be a Noetherian local ring of dimension $d\geq 1$. Suppose
\[
\lim_{N\to\infty} \sup_{\substack{\sqrt{I}=\m \\ \length(R/I)> N}} \left\{\frac{\eh(I)}{d!\length(R/I)} \right\}<\eh(R).
\]
Then we have $\eh(\red{\widehat{R}}) > 1$.
\end{proposition}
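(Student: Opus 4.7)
The plan is to prove the contrapositive: assuming $\eh(\red{\widehat R})=1$, I will exhibit a sequence of $\m$-primary ideals $I_n$ with $\length(R/I_n)\to\infty$ and $\eh(I_n)/(d!\length(R/I_n))\to\eh(R)$, contradicting the hypothesis (the reverse bound $\le\eh(R)$ being Lech's inequality). Because extension induces a colength- and multiplicity-preserving bijection between $\m$-primary ideals of $R$ and $\m\widehat R$-primary ideals of $\widehat R$, I first pass to the completion and assume $R$ is complete.

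Next I would extract the structure forced by $\eh(\red R)=1$. Let $P_1,\dots,P_s$ be the minimal primes of $R$ of dimension $d$. Since $\red R$ is reduced, the associativity formula reads
\[
1=\eh(\red R)=\sum_{i=1}^s\eh(R/P_i),
\]
so there is a unique such prime, call it $P$, with $\eh(R/P)=1$. The quotient $R/P$ is a complete local domain (hence unmixed) of multiplicity one, so Nagata's classical theorem identifies $R/P$ as a regular local ring of dimension $d$. A second application of associativity gives
\[
\eh(R)=\sum_{i=1}^s\length(R_{P_i})\,\eh(R/P_i)=\length(R_P)=:m.
\]

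With this in hand, the natural candidates are $I_n:=P+\m^n$, which are $\m$-primary since $\sqrt{I_n}=\m$. Regularity of $R/P$ immediately gives
\[
\length(R/I_n)=\length\bigl((R/P)/\m^n(R/P)\bigr)=\binom{n+d-1}{d},
\]
and the associativity formula applied to $I_n$ yields
\[
\eh(I_n)=\length(R_P)\,\eh\bigl(\m^n(R/P),\,R/P\bigr)=m\,n^d.
\]
Therefore
\[
\frac{\eh(I_n)}{d!\length(R/I_n)}=\frac{m\,n^d}{n(n+1)\cdots(n+d-1)}\;\longrightarrow\;m=\eh(R),
\]
while $\length(R/I_n)\to\infty$, producing the desired contradiction.

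The only non-routine input is Nagata's theorem that a complete unmixed local ring of multiplicity one is regular — this is what identifies $R/P$ as regular and pins down $\eh(R)=\length(R_P)$. Once that is available the choice $I_n=P+\m^n$ is essentially forced: $P$ captures the generic nilpotent length along the unique top-dimensional component, while $\m^n$ maximises colength in the regular quotient $R/P$, and the remaining calculations are immediate.
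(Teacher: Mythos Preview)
Your proof is correct and follows the same overall contrapositive strategy as the paper, but the execution differs in a meaningful way. The paper takes $I_n=\mf m^n+\sqrt{0}$: since nilpotents are integral over every ideal, $\mf m^n\subseteq I_n\subseteq\overline{\mf m^n}$, so $\eh(I_n)=\eh(\mf m^n)=n^d\eh(R)$; and $\length(R/I_n)=\length(\red R/\mf m^n\red R)\sim n^d/d!$ simply by the definition of multiplicity applied to $\red R$. No structure theorem is invoked. Your route instead isolates the unique top-dimensional minimal prime $P$, appeals to Nagata's theorem to identify $R/P$ as regular, and then computes both multiplicity and colength of $I_n=P+\mf m^n$ exactly via associativity and the binomial Hilbert function. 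Your argument gives sharper intermediate formulas and makes the structure of the situation transparent, at the cost of citing a nontrivial external result; the paper's argument is shorter and more self-contained, since the integral-closure trick and the asymptotic definition of $\eh(\red R)$ already suffice.
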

\begin{proof}
Without loss of generality, we may assume that $R$ is complete. If $\eh(\red{R}) = 1$, then consider the family of ideals $I_n = \mf m^n + \sqrt{0}$.
Since $\m^n\subseteq I_n \subseteq \overline{\mf m^n}$, we have $\eh(I_n) = \eh(\m^n)=n^d \eh(R)$. On the other hand, $\length(R/I_n)=\length(\red{R}/\m^n\red{R})$, so as $n$ tends to infinity it tends to $n^d/d!$ since $\eh(\red{R})=1$. Therefore
\[
\lim_{n \to \infty} \frac{\eh(I_n)}{d! \length (R/I_n)}
=\lim_{n \to \infty} \frac{n^d\eh(R)}{d! \length (R/I_n)} = \eh(R)
\]
which is a contradiction.
\end{proof}

The next lemma originates from \cite[Korollar~4.2]{Flenner} where the assumption on dimension was missing.
We present a proof here for completeness.

\begin{lemma}\label{lem.Flenner}
Let $(R, \mf m)$ be an equal characteristic Noetherian complete local ring of dimension $d$
such that $R/\mf m$ is an infinite field.
Let $s\leq d-2$ and suppose $R$ satisfies $(R_s)$, i.e.,
$R_P$ is regular for all primes of height at most $s$.
Then for a general element $x$ of $\mf m$, $R/xR$ also satisfies $(R_s)$.
\end{lemma}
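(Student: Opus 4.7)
The plan is a Bertini-style argument exploiting that $R/\m$ is infinite and that equal characteristic provides a coefficient field $K \subseteq R$ via Cohen's structure theorem. Write $\mathrm{Sing}(R) = V(\mathfrak{a})$ for some ideal $\mathfrak{a}$ with $\hght \mathfrak{a} \geq s+1$; such an $\mathfrak{a}$ exists because complete local rings are excellent. A prime $Q = P/xR$ of $R/xR$ lies in $\mathrm{Sing}(R/xR)$ exactly when either $P \supseteq \mathfrak{a}$, or $R_P$ is regular and $x \in P^2 R_P$. Thus $\mathrm{Sing}(R/xR)$ splits into two pieces, which I would bound separately.

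The first piece $V(\mathfrak{a} + (x))/(x)$ is controlled by the hypothesis $s \leq d-2$: since $\hght \mathfrak{a} \leq d-1$, no minimal prime of $\mathfrak{a}$ equals $\m$, and there are only finitely many such primes. A general element $x \in \m$ (meaning its image in $\m/\m^2$ avoids a fixed finite union of proper $K$-subspaces) therefore misses all of them, forcing $\hght(\mathfrak{a} + (x)) \geq s+2$ in $R$, i.e., codimension $\geq s+1$ in $R/xR$. This is where the assumption $s+2 \leq d$ is essential, as without it $\mathfrak{a}$ could have $\m$ itself as a minimal prime, and no general element of $\m$ would avoid it.

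The main obstacle is the second piece, since there may be infinitely many primes $P$ of height $\leq s+1$ at which $R$ is regular. To handle this, I would fix generators $u_1, \ldots, u_N$ of $\m$, introduce new indeterminates $t_1, \ldots, t_N$, and work in the faithfully flat extension $\widetilde{R} = R \otimes_K K(t_1, \ldots, t_N)$ with the generic element $\tilde{x} = \sum t_i u_i$. For each irreducible closed subvariety $V$ of the regular locus of $\Spec R$ of codimension $h \leq s+1$ with generic point $P$, the condition $\tilde{x} \in P^2 R_P$ cuts out a proper $K(\underline{t})$-subspace of codimension $h$ in the parameter space, because $\m R_P = R_P$ (as $P \subsetneq \m$) forces $\m$ to surject onto $P R_P/P^2 R_P$ modulo the maximal ideal of $R_P$. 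A stratified dimension count on the incidence subscheme in $\mathrm{Reg}(R) \times \mathbb{A}^N_K$ shows that the projection to $\mathbb{A}^N_K$ is not dominant on any such stratum; specializing $t_i \mapsto c_i$ for general $c_i \in K$ (using $K$ infinite) then yields that for a general $x \in \m$ no bad prime of height $\leq s+1$ arises. The technical heart is making this dimension count precise by packaging $(P, \underline{t}) \mapsto [\tilde{x}] \in P R_P/P^2 R_P$ as a morphism of finite-type schemes over $K$, which is essentially the content of Flenner's original Bertini argument in \cite{Flenner}; the present correction is just that that argument requires $\hght \m \geq s+2$.
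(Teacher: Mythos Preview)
Your decomposition is exactly the paper's: split $\mathrm{Sing}(R/xR)$ into the image of $\mathrm{Sing}(R)$ and the locus where $R_P$ is regular but $x \in P^2 R_P$. For the second piece the paper simply invokes Flenner's theorem \cite[Satz~4.1]{Flenner} that a general $x \in \mf m$ avoids $P^{(2)}$ for every $P \neq \mf m$, whereas you sketch the incidence-variety dimension count behind it; since you yourself identify that sketch as ``essentially the content of Flenner's original Bertini argument,'' the two approaches coincide.

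One correction to your first piece: the claim $\hght \mathfrak{a} \leq d-1$ is unjustified and false when $R$ has an isolated singularity (then $\mathfrak{a}$ is $\mf m$-primary and $\hght \mathfrak{a} = d$). The repair is to avoid only those minimal primes of $\mathfrak{a}$ of height $< d$; if $\mf m$ itself is a minimal prime of $\mathfrak{a}$, the corresponding contribution to $V(\mathfrak{a}+(x))/(x)$ is just $\{\mf m/(x)\}$, which already has codimension $d-1 \geq s+1$. The paper handles this in the same spirit but more economically, avoiding only the minimal primes of the singular-locus ideal of height exactly $s+1$, and notes (as you correctly do) that $s+1 < d$ is precisely what makes the avoidance possible.
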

\begin{proof}
Flenner \cite[Satz~4.1]{Flenner} shows that a general element $x \in \mf m$ satisfies that
$x \notin P^{(2)}$ for any $P \neq \mf m$.
We will also demand that $x \notin Q$ for any minimal prime ideal $Q$ of the defining ideal of the singular locus $J$ of height $s+1$ (note that
$\hght J \geq s+1$ by our assumption, and if $J$ has height $\geq s+2$, then this condition is empty). This is possible because $s+1<d$ by our assumption.

Note that if the image of a prime ideal $P$ containing $x$ is a height $h$ prime of $R'$, then $P$ has height $h + 1$. Since $x \notin P^2R_P$, if $R_P$ is regular then $R_P/xR_P$ is also regular. This is automatic if $h < s$. If $h = s$, then since $x \in P$, it follows that $J$ is not contained in $P$ (since otherwise $\hght P \geq s + 2$) and hence $R_P/xR_P$ is still regular.
\end{proof}

The following lemma is also well-known. But we include a short proof for completeness.
\begin{lemma}\label{lem.num bound}
Let $(R, \mf m)$ be a Noetherian local ring of dimension one.
Then the number of generators of any $\mf m$-primary ideal
is bounded by $\eh(R) + \length (\lc^0_{\mf m} (R))$.
\end{lemma}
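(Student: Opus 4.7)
The plan is to reduce to the Cohen--Macaulay case by killing $N := \lc^0_\m(R)$, then handle the Cohen--Macaulay case via a superficial non-zerodivisor. Throughout I may assume $R/\m$ is infinite by passing to the faithfully flat extension $R[t]_{\m R[t]}$, which preserves $\mu$, $\eh(R)$, and $\length(N)$.

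First I would set $R' = R/N$. The long exact sequence in local cohomology associated to $0 \to N \to R \to R' \to 0$ shows $\lc^0_\m(R') = 0$, so $R'$ is Cohen--Macaulay of dimension one; moreover, since every minimal prime of $R$ has dimension one while $N$ is supported at $\m$, the associativity formula yields $\eh(R) = \eh(R')$. For an $\m$-primary ideal $I \subseteq R$, the canonical surjection $I/\m I \twoheadrightarrow IR'/(\m R')(IR')$ has kernel $(I \cap (\m I + N))/\m I$. By Dedekind's modular law (using $\m I \subseteq I$), this equals $(\m I + I \cap N)/\m I \cong (I \cap N)/(\m I \cap N)$, which is annihilated by $\m$ and embeds into $N$; hence its length is at most $\length(N)$ and we obtain $\mu(I) \leq \mu_{R'}(IR') + \length(N)$.

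It therefore suffices to prove $\mu(J) \leq \eh(R')$ for any $\m R'$-primary ideal $J \subseteq R'$. I would pick a superficial element $x \in \m R'$: Cohen--Macaulayness gives that $x$ is a non-zerodivisor on $R'$ with $\length(R'/xR') = \eh(R')$, and since $\operatorname{Ass}(J) \subseteq \operatorname{Ass}(R')$, $x$ is also a non-zerodivisor on $J$. Combining the short exact sequences $0 \to J/xJ \to R'/xJ \to R'/J \to 0$ and $0 \to R'/J \xrightarrow{x} R'/xJ \to R'/xR' \to 0$ (the latter using that $x$ is a non-zerodivisor on $R'$) yields $\length(J/xJ) = \length(R'/xR') = \eh(R')$, and Nakayama's lemma concludes $\mu(J) \leq \length(J/xJ) = \eh(R')$.

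The one step requiring care is the kernel computation in the second paragraph: the correct application of Dedekind's modular law is essential to ensure the kernel is controlled by $\length(N)$ uniformly in $I$, rather than by a quantity growing with $I$. Once this is in place, the remaining reductions are routine and the Cohen--Macaulay case is standard.
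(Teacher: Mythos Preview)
Your proof is correct and follows essentially the same approach as the paper: reduce to the Cohen--Macaulay quotient $R' = R/\lc^0_{\mf m}(R)$, bound the kernel of $I/\mf m I \twoheadrightarrow IR'/\mf m IR'$ by $\length(\lc^0_{\mf m}(R))$, and then in the Cohen--Macaulay case use a minimal reduction $x$ of $\mf m$ to get $\mu(J) \leq \length(J/xJ) = \eh(R')$. The only cosmetic difference is that the paper dispatches the last equality via the additivity of multiplicity ($\eh(x, J) = \eh(x, R')$ since $R'/J$ has finite length), whereas you compute $\length(J/xJ) = \length(R'/xR')$ explicitly from two short exact sequences; these are the same computation. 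One small wording issue: the kernel $(I\cap N)/(\mf m I \cap N)$ is a \emph{subquotient} of $N$ rather than a submodule, but your length bound is of course unaffected.
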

\begin{proof}
Observe that $S := R/\lc^0_{\mf m} (R)$ is a Cohen--Macaulay ring. Then
\[
\mu(I) = \length (I/\mf mI)\leq \mu(IS) + \length ((\lc^0_{\mf m} (R) + \mf mI)/\mf m I)
\leq  \mu(IS)+ \length (\lc^0_{\mf m} (R)).
\]
We may extend the residue field of $R$ and $S$ without changing $\mu(IS)$ or $\eh(S) = \eh(R)$.
Thus we may assume $\mf m$ has a minimal reduction $x$. Now for any $\mf m$-primary ideal $I$, we have
\[
\mu (IS) = \length (IS/\mf mIS) \leq \length (IS/xIS) = \eh(x, IS)= \eh(x, S) = \eh(R). \qedhere
\]
\end{proof}

We next recall the following result of Hanes (\cite[Theorem~2.4]{Hanes}).

\begin{theorem}[Hanes]
\label{thm.Hanes}
Let $(R, \mf m)$ be a Noetherian local ring of characteristic $p > 0$ and dimension $d > 1$.
Then for any $\mf m$-primary ideal $I$ we have
\[
\eh(I) \leq d! \frac{(\mu(I)^{1/(d-1)} -1)^{d-1}}{\mu(I)}\ehk(I) = d! \left (1 - \frac{1}{\mu(I)^{1/(d-1)}}\right)^{d-1} \ehk(I).
\]
\end{theorem}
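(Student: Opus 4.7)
The classical inequality $\eh(I) \leq d!\ehk(I)$ of Watanabe--Yoshida comes from the trivial containment $I^{[p^e]} \subseteq I^{p^e}$ combined with $\length(R/I^{p^e}) \sim \eh(I)p^{ed}/d!$. Hanes' refinement must exploit the constraint that $I$ is generated by only $n = \mu(I)$ elements: intuitively, $I^{[p^e]}$ is very rigid, being generated by just $n$ Frobenius powers, while $I^{p^e}$ has roughly $p^{e(d-1)}$ minimal generators, so the gap between them should be large when $n$ is small.

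My plan is to first pass to $R(t) = R[t]_{\m R[t]}$ to assume the residue field is infinite (which preserves $\eh$, $\ehk$, and $\mu$), and then fix a minimal generating set $f_1, \dots, f_n$ of $I$. The key elementary input is the pigeonhole inclusion
\[
I^{n(s-1)+1} \subseteq (f_1^s, \dots, f_n^s) \quad \text{for every } s \geq 1,
\]
valid because any monomial in the $f_i$ of total degree $\geq n(s-1)+1$ must have some exponent $\geq s$. Specializing $s = p^e$ sandwiches $\length(R/I^{[p^e]})$ between $\length(R/I^{p^e})$ and $\length(R/I^{n(p^e-1)+1})$, which yields the coarse two-sided bound $\eh(I)/d! \leq \ehk(I) \leq n^d\eh(I)/d!$. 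To obtain Hanes' finer estimate, I would compare $I^{[p^e]}$ against a family of interpolating ideals $(f_1^m, \dots, f_n^m)$ parameterized by an auxiliary integer $m \leq p^e$, and estimate the colength of the corresponding quotient via a mixed Hilbert--Samuel count (governed by $I^m \supseteq (f_1^m,\dots,f_n^m)$) and a Hilbert--Kunz count constrained by the fact that these ideals always have at most $n$ generators.

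The specific exponent $1/(d-1)$ in the bound $d!(1 - 1/n^{1/(d-1)})^{d-1}\ehk(I)$ ought to emerge from an AM--GM-type optimization tied to the Krull dimension $d$ of the fiber cone $F(I) = \bigoplus_k I^k/\m I^k$; equivalently, to the fact that $\mu(I^k)$ grows as a polynomial of degree $d-1$ in $k$. Heuristically, the optimal choice $m \sim p^e/n^{1/(d-1)}$ balances the loss from $(f_1^m, \dots, f_n^m) \subseteq I^m$ against the gain from using only $n$ generators, and the resulting coefficient should then come out to exactly $d!(1 - 1/n^{1/(d-1)})^{d-1}$. The main obstacle I expect is executing this optimization cleanly: one must balance the polynomial growth of $\length(R/I^k)$ in $k$ against the fixed number $n$ of generators of Frobenius-like powers, and uniformly control lower-order terms in $e$ throughout. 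A secondary obstacle is verifying the sharp constant at the endpoints, i.e., checking that the bound reduces to the known equality $\eh(I) = \ehk(I)$ for parameter ideals ($n = d$) and to the classical $\eh(I) \leq d!\ehk(I)$ in the limit $n \to \infty$, which serves as a sanity check on the optimization.
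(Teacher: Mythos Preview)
The paper does not prove this theorem. It is quoted as a result of Hanes (with citation to \cite[Theorem~2.4]{Hanes}) and is used as a black box in the proof of \autoref{prop.LechbddGen}. There is therefore no proof in the paper to compare your proposal against.

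For what it is worth, your sketch is headed in the right direction for Hanes' actual argument: the pigeonhole containment $I^{n(s-1)+1}\subseteq (f_1^s,\dots,f_n^s)$ is indeed the engine, and the refinement over the crude bound $\eh(I)\le d!\,\ehk(I)$ comes from counting how many generators $I^{[p^e]}$ has versus how many $I^{p^e}$ has. However, your proposal remains a heuristic outline rather than a proof: the phrase ``AM--GM-type optimization tied to the Krull dimension of the fiber cone'' and the claim that the optimal $m\sim p^e/n^{1/(d-1)}$ produces exactly the constant $(1-1/n^{1/(d-1)})^{d-1}$ are assertions, not arguments. If you want to actually prove the theorem you will need to carry out the colength estimate for $(f_1^m,\dots,f_n^m)$ explicitly and perform the optimization; consult Hanes' original paper for the details.
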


\begin{proposition}
\label{prop.LechbddGen}
Let $(R,\m)$ be an equal characteristic Noetherian complete local ring of dimension $d>1$. Then for any $\m$-primary ideal $I$ such that $\mu(I)\leq C$, we have
$$\eh(I)\leq d!\left(1-\frac{1}{C^{1/(d-1)}}\right)^{d-1}\eh(R)\length(R/I).$$
\end{proposition}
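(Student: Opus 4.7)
The plan is to apply Hanes's theorem (\autoref{thm.Hanes}) combined with the Watanabe--Yoshida inequality (\ref{eqn:dag}) in positive characteristic, and then deduce the equal characteristic zero case from the positive characteristic one by a spread-out / reduction-modulo-$p$ argument.

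First, suppose $\mathrm{char}(R) = p > 0$. Since the function $x \mapsto \bigl(1 - x^{-1/(d-1)}\bigr)^{d-1}$ is monotonically increasing on $[1, \infty)$, the hypothesis $\mu(I) \leq C$ gives $\bigl(1 - \mu(I)^{-1/(d-1)}\bigr)^{d-1} \leq \bigl(1 - C^{-1/(d-1)}\bigr)^{d-1}$. I would then chain together
\begin{align*}
\eh(I) &\leq d!\left(1 - \mu(I)^{-1/(d-1)}\right)^{d-1}\ehk(I) \\
&\leq d!\left(1 - C^{-1/(d-1)}\right)^{d-1}\ehk(R)\length(R/I) \\
&\leq d!\left(1 - C^{-1/(d-1)}\right)^{d-1}\eh(R)\length(R/I),
\end{align*}
where the first inequality is \autoref{thm.Hanes}, the second combines the monotonicity just noted with the bound $\ehk(I) \leq \ehk(R)\length(R/I)$ from (\ref{eqn:dag}), and the third is the standard inequality $\ehk(R) \leq \eh(R)$. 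This settles the positive characteristic case.

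Next, for equal characteristic zero, I would fix an $\m$-primary ideal $I$ with $\mu(I) \leq C$ and reduce modulo $p$. By Cohen's structure theorem, write $R = K[[x_1, \ldots, x_n]]/(f_1, \ldots, f_t)$ with $K$ a field of characteristic $0$, and pick generators $g_1, \ldots, g_{\mu(I)}$ of $I$. Spreading out the finitely many coefficients of the $f_i$ and $g_j$ over a finitely generated $\mathbb{Z}$-subalgebra $A \subseteq K$ produces a family of models $R_A \supseteq I_A$, and standard generic freeness yields, for all but finitely many closed points of $\Spec A$, a complete local ring $R_\kappa$ of equal characteristic $p > 0$ with $\dim R_\kappa = d$ and $\eh(R_\kappa) = \eh(R)$, together with an $\m$-primary ideal $I_\kappa \subseteq R_\kappa$ satisfying $\eh(I_\kappa) = \eh(I)$, $\length(R_\kappa/I_\kappa) = \length(R/I)$, and $\mu(I_\kappa) \leq \mu(I) \leq C$. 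Applying the positive characteristic case to $(R_\kappa, I_\kappa)$ then delivers the inequality for $(R, I)$. The main obstacle will be the reduction modulo $p$ step itself: one must carefully verify that the Hilbert--Samuel multiplicities of the ring and of the ideal, as well as the colength and the number of generators, all descend faithfully to the characteristic-$p$ model. This is standard but delicate, since $I$ is arbitrary subject only to $\mu(I) \leq C$, so the spread-out subalgebra $A$ depends on $I$ and one needs the Hilbert function data to be preserved through the base change.
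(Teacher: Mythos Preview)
Your positive characteristic argument is correct and essentially identical to the paper's: combine \autoref{thm.Hanes} with the inequality $\ehk(I) \leq \ehk(R)\length(R/I)$ from \cite[Lemma~4.2]{WatanabeYoshida} and $\ehk(R)\leq\eh(R)$.

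The characteristic zero argument, however, has a real gap. You write that you will ``spread out the finitely many coefficients of the $f_i$ and $g_j$ over a finitely generated $\mathbb{Z}$-subalgebra $A \subseteq K$,'' but the $f_i$ are \emph{power series}, not polynomials: they typically have infinitely many nonzero coefficients, and there is no reason these should all lie in a single finitely generated $\mathbb{Z}$-algebra. Even if you try to work modulo a high power of the maximal ideal to make the $f_i$ polynomial, the Hilbert--Samuel multiplicities $\eh(I)$ and $\eh(R)$ are asymptotic invariants (limits over $\length(R/I^n)$ as $n\to\infty$), so truncation destroys exactly the data you need to preserve.

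The paper handles this by inserting an Artin approximation step before any spread-out: one encodes the hypothetical counter-example $(R,I)$, together with the numerical data $\eh(I)$, $\eh(R)$, $\length(R/I)$, and the bound $\mu(I)\leq C$, as a system of equations over a complete regular local ring $A$; Artin approximation then produces a solution over the henselization of something essentially of finite type over $K$. Only at that stage does one have genuinely algebraic data that can be spread out over $\mathbb{Z}$ and reduced modulo $p$. Your proposal is missing precisely this descent from the formal to the algebraic setting.
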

\begin{proof}
If $R$ has characteristic $p>0$, then the assertion follows from \autoref{thm.Hanes} and the fact that $\ehk(I)\leq \ehk(R)\length(R/I)\leq \eh(R)\length(R/I)$ (for example see \cite[Lemma 4.2]{WatanabeYoshida}).

If $R$ has characteristic $0$, we prove it using Artin approximation and reduction mod $p>0$. We give the idea and omit the technical details here. Suppose we have a counter-example in characteristic $0$, then we think of the counter-example as a pair $(R,I)$, where $R$ is a finitely generated module over a complete regular local ring $A$ that has an algebra structure, $I\subseteq R$ is an $A$-submodule that is also an $R$-submodule whose number of generators over $R$ is $\leq C$. All these data can be described by using equations over $A$. We next note that one can keep track of $\eh(I)$, $\eh(R)$ and $\length(R/I)$ using equations, for example see \cite[5.1]{MaLech}.\footnote{\cite[5.1]{MaLech} only explains this when $I=\m$, but the same argument works for arbitrary $\m$-primary ideal $I$. } Therefore by Artin approximation, the counter-example descends to a counter-example over a henselian regular local ring $A'$, and thus descends to a counter-example essentially of finite type over $K=R/\m$. We then use standard reduction mod $p>0$ technique to obtain a counter-example in characteristic $p>0$, by noting that $\eh(I)$ and $\eh(R)$ can be computed by alternating sum of the lengths of Koszul homology modules of a minimal reduction of $I$ and $\m$ respectively, and by picking a suitable model and the generic flatness, these lengths at the generic fiber are the same as the special fiber for $p\gg0$ (see \cite[5.2]{MaLech}). Therefore we eventually arrive a counter-example in characteristic $p>0$, which is a contradiction.
\end{proof}

We now prove the main result of this section. Compared with \autoref{thm.Lech}, this result says that, under very mild assumptions on $R$,
in dimension at least two Lech's inequality can be improved uniformly for all $\m$-primary ideals $I$. The rough strategy is to use \autoref{lem.induction inequalities} and \autoref{lem.Flenner} to reduce to the case that $\dim R=2$, and then combine the previous results to handle the two-dimensional case.

\begin{theorem}
\label{thm.Mainequalchar}
Let $(R, \mf m)$ be an equal characteristic Noetherian complete local ring of dimension $d \geq 2$. Suppose $R$ satisfies $(R_0)$ and that $\eh(R)>1$.
Then there exists $\varepsilon > 0$ such that for any $\m$-primary ideal $I$, we have
\[
\eh(I) \leq d!(\eh(R) - \varepsilon) \length (R/I).
\]
\end{theorem}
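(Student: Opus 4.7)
My plan is to proceed by induction on the dimension $d \geq 2$. First, I would reduce to the case of an infinite residue field by passing to $R(t) = R[t]_{\m R[t]}$, as done in the proof of Lech's inequality recalled above; this preserves $\eh$, $\length(R/I)$, and the $(R_0)$ hypothesis.

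For the outer step $d \geq 3$, assuming the statement in dimension $d-1$: I would invoke \autoref{lem.Flenner} to fix finitely many general superficial elements $x_1, \dots, x_M \in \m$ that together generate $\m$ and each of which has the property that $R/x_iR$ again satisfies $(R_0)$. Each such $R/x_iR$ is a complete equal-characteristic local ring of dimension $d-1 \geq 2$ satisfying $(R_0)$ with $\eh(R/x_iR) = \eh(R) > 1$ (since $x_i$ is superficial), so the inductive hypothesis yields positive constants $\varepsilon_{d-1,i}$; take $\varepsilon_{d-1} = \min_i \varepsilon_{d-1,i}$. I would then run an inner induction on $\length(R/I)$. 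The base case $I = \m$ is immediate because $\eh(\m) = \eh(R) \leq d!(\eh(R) - \varepsilon)$ for any $\varepsilon \leq \eh(R)(1 - 1/d!)$. For $I \subsetneq \m$, some $x_i$ must lie outside $I$ (otherwise $\m = (x_1,\dots,x_M) \subseteq I$), and applying \autoref{lem.induction inequalities} with this $x_i$ bounds $d\eh(IR'_i)/\length(R'_i/IR'_i)$ by $d!(\eh(R) - \varepsilon_{d-1})$ via the outer hypothesis, while $\eh(I:x_i)/\length(R/(I:x_i))$ is bounded by the same quantity via the inner hypothesis (the colength strictly drops). Taking $\varepsilon_d = \min(\varepsilon_{d-1}, \eh(R)(1 - 1/d!))$ closes the induction.

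The genuinely hard part is the base case $d = 2$: the outer hypothesis is now replaced by the one-dimensional result \autoref{prop.dim one}, which is only asymptotic --- it yields $\eh(IR') \leq (1 + \varepsilon')\length(R'/IR')$ only once $\length(R'/IR') > N_0$. I would split into three regimes: (i) if $\length(R/I)$ is bounded by a fixed constant $M$, then $\mu(I)$ is automatically bounded (by $\length(R/\m^{M+1})$) and \autoref{prop.LechbddGen} produces a uniform strict improvement over Lech directly; (ii) if $\length(R/I) > M$ and $\length(R'/IR') > N_0$, the argument from the outer step applies using \autoref{prop.dim one} in place of the inductive hypothesis; (iii) if $\length(R/I) > M$ but $\length(R'/IR') \leq N_0$, one must resort to the crude one-dimensional Lech bound $\eh(IR') \leq \eh(R) \cdot N_0$, which contributes only an additive error of $O(N_0)$, to be absorbed into a slightly weakened multiplicative constant since it is negligible relative to the large $\length(R/I)$. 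The main obstacle is combining these three regimes to produce a single uniform $\varepsilon > 0$: \autoref{prop.LechbddGen} provides a strong estimate for small colength, which seeds the bootstrap, while in subcase (iii) the constant degrades at each step, so the thresholds $M$ and $N_0$ must be tuned so that the degradation is controlled by the size of the additive error and the induction closes.
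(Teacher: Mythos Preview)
Your outer induction for $d \geq 3$ is essentially the paper's argument; your device of fixing finitely many general superficial elements $x_1,\dots,x_M$ generating $\m$ (so that some $x_i \notin I$ for every $I \subsetneq \m$) is a clean way to satisfy the hypothesis $x \notin I$ of \autoref{lem.induction inequalities} while keeping finitely many quotients $R/x_iR$ and hence a single $\varepsilon_{d-1}$.

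The gap is in your base case $d=2$, regime (iii). Once you land in (iii) you stay there: since $(I:x)R' \supseteq IR'$, the quantity $\length(R'/(I:x)R')$ can only decrease, so it remains $\leq N_0$ at every subsequent step of the colength induction until you hit regime (i). Writing $L_j = \length(R/(I:x^j))$, each step contributes to the ratio $\eh(I)/(2\length(R/I))$ an error of size at most $\eh(R')(L_j - L_{j+1})/L_j$, and the telescoped sum is of order $\eh(R)\log(L_0/M)$, which diverges as $L_0 \to \infty$. No tuning of $M$ and $N_0$ fixes this: for instance, in the $A_1$ singularity with $I = (x^n,y,z)$ one has $\length(R'/IR') = 1$ at every step, the colength drops by $1$ each time, and your estimate gives a useless bound even though $\eh(I)/(2\length(R/I)) = 1$.

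The paper's resolution is that regime (iii) does not require the colength induction at all: small $\length(R'/IR')$ already forces $\mu(I)$ to be bounded. This uses the inequality $\mu(I) \leq \mu(IR') + \length(R'/IR')$ (Goto, Watanabe) together with the fact that $\mu(IR')$ is uniformly bounded over a one-dimensional ring (\autoref{lem.num bound}). Hence \autoref{prop.LechbddGen} applies directly in regime (iii), yielding a fixed $\varepsilon_1 > 0$ with no accumulation. In effect, the correct dichotomy is on $\length(R'/IR')$ rather than on $\length(R/I)$: your regimes (i) and (iii) merge into a single non-inductive case, and only regime (ii) needs the induction on colength.
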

\begin{proof}
We may pass to $R(t)=R[t]_{\m R[t]}$ to assume that $R$ has an infinite residue field. Let $x \in \mf m$ be a general element and $R' = R/xR$. We note that $R'$ still satisfies $(R_0)$ by \autoref{lem.Flenner}, and since $d\geq 2$, we have $\eh(R) = \eh(R')$ by \cite[Proposition 8.5.7 and 11.1.9]{HunekeSwanson}.

We use induction on $d$ and we first show the inductive step. So, we assume $d\geq 3$ and the result holds for $R'$. That is, there exists $\varepsilon$ such that $\eh(J) \leq (d-1)!(\eh(R') - \varepsilon) \length (R'/J)$ for any $\m$-primary ideal $J$ in $R'$. We use induction on $\length (R/I)$ to show that the same $\varepsilon$ works for $R$ (the initial case $I=\m$ is obvious). By \autoref{lem.induction inequalities} we have
\[
\frac{\eh(I)}{d!\length (R/I)}
\leq \max \left\{\frac{\eh(I:x)}{d!\length (R/(I:x))}, \frac{\eh(IR')}{(d-1)!\length (R'/IR')} \right\}
\leq \eh(R') - \varepsilon=\eh(R)-\varepsilon.  \]

It remains to prove the base case $d=2$. Let $x \in \mf m$ be a general element and $R' = R/xR$. We note that $R'$ still satisfies $(R_0)$ by \autoref{lem.Flenner}.
Fix any $\varepsilon_0 > 0$ such that $\eh(R) - \varepsilon_0 > 1$.
By \autoref{prop.dim one} we can find $N$ such that
$\eh(J) \leq (\eh(R) - \varepsilon_0) \length (R'/J)$ for every ideal $J \subseteq R'$ such that
$\length (R'/J)> N$.

Now, suppose that $\length (R'/IR') \leq N$ (e.g., $\length (R/I) \leq N$).
By \cite[Lemma~2.2]{Goto} or \cite[Theorem~1]{WatanabeJ}, $\mu(I) \leq \mu(IR') + \length(R'/IR')$
and by \autoref{lem.num bound} this implies that $\mu(I)$ is bounded by a constant $C$ that only depends on $R$ and $R'$.
Therefore by \autoref{prop.LechbddGen},
\[
\eh(I) \leq 2 \left (1 - \frac{1}{C} \right) \eh(R)\length (R/I).
\]
Thus we can find $\varepsilon_{1}$ such that $\eh(I) \leq 2 (\eh(R) - \varepsilon_1) \length(R/I)$.

Finally we use induction on $\length (R/I)$ to show that $\varepsilon = \min (\varepsilon_0, \varepsilon_1)$ works for  all $I$.
We may assume that $\length (R'/IR') > N$. Then by \autoref{lem.induction inequalities} we have
\[
\frac{\eh(I)}{2\length (R/I)}
\leq \max \left\{\frac{\eh(I:x)}{2\length (R/(I:x))}, \frac{\eh(IR')}{\length (R'/IR')} \right\}
\leq \eh(R) - \varepsilon.  \qedhere
\]
\end{proof}

In dimension $\geq 2$, \autoref{conj.asymptoticLech} (b) follows from \autoref{thm.Mainequalchar} (in equal characteristic):
\begin{corollary}[Uniform Lech's inequality]
\label{cor.Mainequalchar}
Let $(R, \mf m)$ be an equal characteristic Noetherian local ring of dimension $d \geq 2$. Suppose $\eh(\red{\widehat{R}})>1$. Then there exists $\varepsilon > 0$ such that for any $\m$-primary ideal $I$, we have
$
\eh(I) \leq d!(\eh(R) - \varepsilon) \length (R/I).
$
In particular,
$$\lim_{N\to\infty} \sup_{\substack{\sqrt{I}=\m \\ \length(R/I)> N}} \left\{\frac{\eh(I)}{d!\length(R/I)} \right\}\leq \eh(R)-\varepsilon<\eh(R).$$
\end{corollary}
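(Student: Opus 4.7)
The plan is to deduce the corollary from \autoref{thm.Mainequalchar} applied to $S = \red{\widehat{R}}$, using the associativity formula for Hilbert--Samuel multiplicity to bridge between $R$ and $S$.

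First I would pass to the completion: since colengths of $\m$-primary ideals, $\eh(I,R)$, and $\eh(R)$ are all preserved by completion, we may assume $R = \widehat{R}$. Now set $S = \red{R} = R/\sqrt{0}$. Then $S$ is a complete local ring of equal characteristic, of dimension $d\geq 2$, and $S$ satisfies $(R_0)$ because it is reduced (any localization at a minimal prime of a reduced ring is a field). The hypothesis gives $\eh(S) = \eh(\red{\widehat R}) > 1$, so \autoref{thm.Mainequalchar} applies and yields $\varepsilon_0 > 0$ such that
\[
\eh(J, S) \leq d!\bigl(\eh(S) - \varepsilon_0\bigr)\length(S/J)
\]
for every $\m S$-primary ideal $J$.

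Next, for any $\m$-primary $I\subseteq R$, I apply the associativity formula for multiplicity in $R$ and $S$. The minimal primes of $S$ are exactly $P/\sqrt{0}$ for $P$ minimal in $R$, and for such $P$ the ring $S_{P/\sqrt{0}}$ is a field (as $S$ is reduced). Writing $m_P = \length(R_P)$ and letting $P$ range over minimal primes of $R$ with $\dim R/P = d$, this gives
\[
\eh(I,R) = \sum_P m_P\, \eh(I, R/P),\qquad \eh(IS,S) = \sum_P \eh(I, R/P),
\]
\[
\eh(R) = \sum_P m_P\, \eh(R/P),\qquad \eh(S) = \sum_P \eh(R/P).
\]
Subtracting the first pair yields $\eh(I,R) - \eh(IS,S) = \sum_P (m_P-1)\eh(I,R/P)$.

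Now I would invoke classical Lech's inequality (\autoref{thm.Lech}) inside each domain $R/P$: since $\length((R/P)/I(R/P)) \leq \length(R/I)$, we have $\eh(I,R/P)\leq d!\eh(R/P)\length(R/I)$. Summing over $P$ and using the multiplicity identities above:
\[
\eh(I,R) - \eh(IS,S) \leq d!\length(R/I)\bigl(\eh(R) - \eh(S)\bigr).
\]
Combining with $\eh(IS,S) \leq d!(\eh(S) - \varepsilon_0)\length(S/IS) \leq d!(\eh(S) - \varepsilon_0)\length(R/I)$ (using $\length(S/IS)\leq\length(R/I)$) gives the desired bound
\[
\eh(I,R) \leq d!\bigl(\eh(R)-\varepsilon_0\bigr)\length(R/I).
\]
The ``in particular'' statement is then immediate by taking sup over $\length(R/I) > N$ and letting $N\to\infty$. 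The main conceptual obstacle is recognizing the correct splitting: the ``non-reduced excess'' $\sum_P (m_P-1)\eh(I,R/P)$ is absorbed by the classical Lech inequality (which is sharp enough here precisely because $\eh(R)-\eh(S)$ appears on the right), while the reduced part $\eh(IS,S)$ inherits the strict improvement $\eh(S)-\varepsilon_0$ from \autoref{thm.Mainequalchar}; the two $\varepsilon_0$-free terms recombine perfectly into $\eh(R)-\varepsilon_0$.
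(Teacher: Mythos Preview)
Your proof is correct and uses the same three ingredients as the paper---\autoref{thm.Mainequalchar} applied to $S=\red{\widehat R}$, the associativity formula, and classical Lech on each $R/P$---but you combine them differently. The paper, after obtaining
\[
\sum_i \eh(I,R/P_i)\le d!\Bigl(\sum_i \eh(R/P_i)-\varepsilon\Bigr)\length(R/I),
\]
uses a pigeonhole step to find a single index $k$ with $\eh(I,R/P_k)\le d!(\eh(R/P_k)-\varepsilon/n)\length(R/I)$, bounds the remaining summands by Lech, and reassembles via associativity with the weights $\length(R_{P_i})$; this yields the final constant $\varepsilon'=\tfrac{\varepsilon}{n}\min_i\length(R_{P_i})$. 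Your decomposition $\eh(I,R)=\eh(IS,S)+\sum_P(m_P-1)\eh(I,R/P)$ is more direct: it avoids the pigeonhole entirely and recovers the full $\varepsilon_0$ from \autoref{thm.Mainequalchar} with no loss. Neither constant dominates the other in all cases (yours is better when $R$ has several top-dimensional minimal primes with small $m_P$; the paper's can be better when some $m_P$ is large), but your route is certainly the cleaner bookkeeping.
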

\begin{proof}
Since completion does not affect colength and multiplicity, we may assume that $R$ is a complete local ring of dimension $d\geq 2$.
Let $P_1,\ldots,P_n$ be minimal primes of $R$ such that $\dim(R/P_i)=d$.
By the associativity formula for multiplicity, we have $$\eh (I\red{R}) = \sum_{i = 1}^n \eh(I, R/P_i) \hspace{1em} \text{ and } \hspace{1em}  \eh (\red{R}) = \sum_{i = 1}^n \eh(R/P_i).$$ Thus by applying \autoref{thm.Mainequalchar} to $\red{R}$, we know there exists $\varepsilon$ such that for any $\m$-primary ideal $I$,
\[
\sum_{i = 1}^n \eh(I, R/P_i)  \leq d!(\sum_{i = 1}^n \eh(R/P_i)  - \varepsilon)\length(\red{R}/I\red{R})
\leq d!(\sum_{i = 1}^n \eh(R/P_i)  - \varepsilon)\length(R/I).
\]
Therefore for any $\m$-primary ideal $I$ there exists $k$ such that
\[
\eh(I, R/P_k) \leq d!(\eh(R/P_k)-\varepsilon/n)\length(R/I).
\]
For $i\neq k$, by Lech's inequality we know that
$
\eh(I, R/P_i) \leq d!\eh(R/P_i)\length(R/I).
$
Thus by the associativity formula for multiplicity
\begin{align*}
\eh(I) &= \sum_{i = 1}^n \eh(I, R/P_i) \length (R_{P_i})\\
&\leq \sum_{i = 1}^n d! \eh(R/P_i)\length(R/I)\length (R_{P_i}) -d!\frac{\varepsilon}{n} \length (R_{P_k})\length(R/I)\\
&=d! \left (\eh(R) - \frac{\varepsilon}{n}\length (R_{P_k}) \right) \length (R/I).
\end{align*}
Therefore by setting $\varepsilon' = \frac{\varepsilon}{n} \min_i \length (R_{P_i})>0$, we see that $\eh(I)\leq d! \left (\eh(R) - \varepsilon') \right) \length (R/I)$ for any $\m$-primary ideal $I$.
\end{proof}

It remains to prove \autoref{conj.asymptoticLech} (b) in dimension one. We point out the following fact which is of independent interest.

\begin{lemma}
\label{lem.lengthOrderequiv}
Let $(R,\m)$ be a Noetherian local ring of dimension $d\geq 1$. Then we have:
$$\lim_{N\to\infty} \sup_{\substack{\sqrt{I}=\m \\ \length(R/I)> N}} \left\{\frac{\eh(I)}{d!\length(R/I)} \right\}=\lim_{N\to\infty} \sup_{\substack{\sqrt{I}=\m \\ I\subseteq \m^N}} \left\{\frac{\eh(I)}{d!\length(R/I)} \right\}.$$
\end{lemma}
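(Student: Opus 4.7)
My plan is to prove the two inequalities separately. Abbreviate the left-hand side as $L$ and the right-hand side as $L'$. Both limits exist as monotone-decreasing limits of suprema, and both suprema are bounded above by $\eh(R)$ by Lech's inequality. The direction $L'\leq L$ is essentially formal: if $I\subseteq\m^M$ then $\length(R/I)\geq \length(R/\m^M)$, so for any fixed $N$, once $M$ is large enough that $\length(R/\m^M)>N$ the set $\{I\mid \sqrt{I}=\m,\, I\subseteq\m^M\}$ is contained in $\{I\mid \sqrt{I}=\m,\, \length(R/I)>N\}$. Taking suprema and then letting $M\to\infty$ and $N\to\infty$ yields $L'\leq L$.

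The heart of the lemma is the converse $L\leq L'$, for which the plan is the following construction. Given $M$ and $\varepsilon>0$, I want to produce, from an ideal $I$ with large colength and ratio close to $L$, a deeper ideal $I'\subseteq\m^M$ with ratio nearly as large. I take $I':=I\cap\m^M$. This is $\m$-primary (it contains $\m^{\max(\ell,M)}$, where $\m^\ell\subseteq I$), it is contained in $\m^M$, and the inclusion $I'\subseteq I$ of $\m$-primary ideals gives $\eh(I')\geq \eh(I)$. The colength inflates only by a bounded amount: $I/I'\cong (I+\m^M)/\m^M$ injects into $R/\m^M$, so $\length(R/I')\leq \length(R/I)+\length(R/\m^M)$. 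Writing $C=\length(R/\m^M)$ and combining these two bounds gives
$$\frac{\eh(I')}{d!\length(R/I')}\geq \frac{\eh(I)}{d!\length(R/I)}\cdot \frac{\length(R/I)}{\length(R/I)+C}.$$
Using Lech's uniform bound $\eh(I)/(d!\length(R/I))\leq \eh(R)$ to control the multiplicative loss, I choose $N$ so large (depending on $M$ and $\varepsilon$) that the factor $C/(\length(R/I)+C)$ contributes less than $\varepsilon/(2\eh(R))$ whenever $\length(R/I)>N$. For such $N$, selecting an $I$ with $\length(R/I)>N$ and ratio at least $L-\varepsilon/2$ (which exists by definition of $L$), the ideal $I'=I\cap\m^M\in\{J\mid \sqrt{J}=\m,\, J\subseteq\m^M\}$ has ratio exceeding $L-\varepsilon$. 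This forces the $M$-th supremum in $L'$ to be $\geq L-\varepsilon$ for every $M$, so $L'\geq L-\varepsilon$; since $\varepsilon$ was arbitrary, $L'\geq L$.

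The main obstacle is conceptual rather than technical: one might expect to need an elaborate procedure to ``deepen'' an ideal while preserving its multiplicity-to-colength ratio, but the naive intersection $I\cap\m^M$ already works because the two potential defects are both benign. Multiplicity can only increase under shrinking of $\m$-primary ideals, and the colength cost is the fixed quantity $\length(R/\m^M)$, which is negligible once the starting colength is required to exceed some $N\gg \length(R/\m^M)$. The only quantitative input needed is Lech's inequality itself, which provides a uniform ceiling on the ratio to absorb the multiplicative error.
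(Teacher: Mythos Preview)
Your proof is correct and follows essentially the same approach as the paper: both arguments use the key construction $I'=I\cap\mathfrak{m}^N$ and the two observations that $\eh(I')\geq\eh(I)$ while $\length(R/I')\leq\length(R/I)+\length(R/\mathfrak{m}^N)$. The only cosmetic difference is organizational---the paper bounds the ratio for $I$ from above by $\lambda\cdot\frac{\length(R/I')}{\length(R/I')-\length(R/\mathfrak{m}^N)}$ (where $\lambda$ is the $N$-th supremum on the right), thereby avoiding an explicit appeal to Lech's inequality, whereas you bound the ratio for $I'$ from below and invoke Lech to absorb the multiplicative error; these are the same inequality read in opposite directions.
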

\begin{proof}
If $I\subseteq \m^N$, then clearly $\length(R/I)\geq N$ so ``$\geq$" is obvious. Now we fix $N$ and we let $\lambda=\sup_{\substack{\sqrt{I}=\m \\ I\subseteq \m^N}} \left\{\frac{\eh(I)}{d!\length(R/I)} \right\}$. For any $I$ let $I'=I\cap \m^N$. Then we have $\eh(I)\leq \eh(I')$ while
$$\length(R/I)=\length(R/I')-\length(I/I')=\length(R/I')-\length(\frac{I+\m^N}{\m^N})\geq \length(R/I')-\length(R/\m^N).$$
Since $I'\subseteq \m^N$, we have
$$\frac{\eh(I)}{d!\length(R/I)}\leq \frac{\eh(I')}{d!(\length(R/I')-\length(R/\m^N))}\leq \lambda\cdot \left(\frac{\length(R/I')}{\length(R/I')-\length(R/\m^N)}\right).$$
Note that when $\length(R/I)\to\infty$, $\length(R/I')\to\infty$ and hence $\frac{\length(R/I')}{\length(R/I')-\length(R/\m^N)}\to 1$. Thus
$$\lim_{N\to\infty} \sup_{\substack{\sqrt{I}=\m \\ \length(R/I)> N}} \left\{\frac{\eh(I)}{d!\length(R/I)} \right\}\leq \lambda. \qedhere $$
\end{proof}

Finally we prove \autoref{conj.asymptoticLech} (b) in dimension one, in all characteristics. In fact, we can completely understand this asymptotic invariant in dimension one.

\begin{proposition}
\label{prop.MaindimOne}
Let $(R,\m)$ be a Noetherian local ring of dimension one. Then we have
$$\lim_{N\to\infty} \sup_{\substack{\sqrt{I}=\m \\ \length(R/I)> N}} \left\{\frac{\eh(I)}{d!\length(R/I)} \right\}=l:=\max\{\length(\widehat{R}_{P_i})| P_i \text{ is a minimal prime of $\widehat{R}$}\}.$$
In particular, if $\eh(\red{\widehat{R}})>1$, then
$$\lim_{N\to\infty} \sup_{\substack{\sqrt{I}=\m \\ \length(R/I)> N}} \left\{\frac{\eh(I)}{\length(R/I)} \right\}<\eh(R).$$
 \end{proposition}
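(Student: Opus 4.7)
The plan is to pass to the completion and then reduce the computation of the limit to \autoref{prop.dim one} applied separately to each $R/P_i$ and to $\red{R}$. Since colength and multiplicity are preserved under completion, I may assume $R$ is complete. Let $P_1,\ldots,P_n$ be the minimal primes of $R$, with $l_i=\length(R_{P_i})$. Each $R/P_i$ is a complete one-dimensional local domain and hence trivially has an isolated singularity; $\red{R}$ is reduced and one-dimensional, so its localizations at minimal primes are reduced zero-dimensional, i.e.\ fields. Thus both $R/P_i$ and $\red{R}$ satisfy the hypotheses of \autoref{prop.dim one}.

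For the upper bound, the associativity formula gives
\[
\eh(I)=\sum_i l_i\,\eh(I,R/P_i)\le l\sum_i \eh(I,R/P_i)=l\,\eh(I,\red{R}),
\]
and I would split into two regimes according to the size of $\length(\red{R}/I\red{R})$. When this colength exceeds the threshold $N'$ given by \autoref{prop.dim one} applied to $\red{R}$, combining with the surjection $R/I\twoheadrightarrow \red{R}/I\red{R}$ yields $\eh(I)\le l(1+\varepsilon')\length(R/I)$. When it is at most $N'$, Lech's inequality applied to $\red{R}$ gives the uniform bound $\eh(I,\red{R})\le \eh(\red{R})\length(\red{R}/I\red{R})\le \eh(\red{R})N'$, so $\eh(I)$ is bounded and $\eh(I)/\length(R/I)$ is small once $\length(R/I)$ is large. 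This second regime is the main technical subtlety: $\length(\red{R}/I\red{R})$ need not grow with $\length(R/I)$ when $\sqrt{0}$ has infinite length, but since multiplicity ignores nilpotent components, $\eh(I)$ is automatically bounded there and the ratio tends to $0$.

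For the matching lower bound, pick $i^*$ with $l_{i^*}=l$ and any $x\in\m\setminus P_{i^*}$, and consider $I_n=(P_{i^*},x^n)$. The image of $x$ is a parameter in the one-dimensional domain $R/P_{i^*}$, so $I_n$ is $\m$-primary. Associativity gives
\[
\eh(I_n)=l\cdot n\cdot\eh(x,R/P_{i^*})+\sum_{i\neq i^*}l_i\,\eh(I_n,R/P_i),
\]
and the right-hand sum is bounded independently of $n$: for $i\neq i^*$ the ideal $I_n(R/P_i)$ contains the fixed $\m$-primary image of $P_{i^*}+P_i$, whose multiplicity in $R/P_i$ does not depend on $n$. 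Meanwhile $\length(R/I_n)=\length((R/P_{i^*})/(x^n))=n\,\eh(x,R/P_{i^*})$ by Cohen--Macaulayness of $R/P_{i^*}$, hence $\eh(I_n)/\length(R/I_n)\to l$. The final ``in particular'' claim reduces to a short case check: if there are at least two minimal primes then $\eh(R)=\sum l_i\eh(R/P_i)\ge l+(n-1)>l$, and if there is a unique minimal prime $P_1$ then $\eh(R)=l\cdot\eh(\red{R})>l$ precisely under the hypothesis $\eh(\red{R})>1$.
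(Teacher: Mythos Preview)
Your proof is correct and follows essentially the same strategy as the paper: reduce via associativity to $\red{R}$ and invoke \autoref{prop.dim one} for the upper bound, then exhibit an explicit family of test ideals concentrated at a single minimal prime for the lower bound. The only differences are cosmetic: the paper handles the upper bound by passing to the $I\subseteq\m^N$ formulation via \autoref{lem.lengthOrderequiv} (thereby avoiding your two-regime split), and for the lower bound uses the test family $I_N=P_{i^*}+\m^N$ rather than your $(P_{i^*},x^n)$.
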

\begin{proof}
Since completion does not affect colength and multiplicity, we may assume $R$ is complete. Note that by \autoref{prop.dim one}, for any $\varepsilon>0$, there exists $N\gg0$ such that if $I\subseteq \m^N$ (and hence $I\red{R}\subseteq \m^N\red{R}$), then
$$\eh(I\red{R})\leq (1+\varepsilon)\length(\red{R}/I\red{R})\leq (1+\varepsilon)\length(R/I).$$
Let $P_1,\dots, P_n$ be the minimal primes of $R$. By the associativity formula for multiplicity,
$$\eh(I)=\sum_{i=1}^{n}\length(R_{P_i})\eh(I, R/P_i)\leq l\cdot \sum_{i=1}^{n}\eh(I, R/P_i)=l\cdot \eh(I\red{R})\leq l\cdot (1+\varepsilon)\length(R/I).$$
Therefore by \autoref{lem.lengthOrderequiv}, we have
$$\lim_{N\to\infty} \sup_{\substack{\sqrt{I}=\m \\ \length(R/I)> N}} \left\{\frac{\eh(I)}{\length(R/I)} \right\}=\lim_{N\to\infty} \sup_{\substack{\sqrt{I}=\m \\ I\subseteq \m^N}} \left\{\frac{\eh(I)}{\length(R/I)} \right\}\leq l.$$
On the other hand, we know $l=\length(R_{P_i})$ for some $i$. Consider the ideal $I_N=P_i+\m^N$. Then clearly $\length(R/I_N)\geq N$ and we have
$$\eh(I_N)=\sum_{i=1}^{n}\length(R_{P_i})\eh(I_N, R/P_i)\geq l\cdot \eh(\m^N, R/P_i)=l\cdot N\eh(R/P_i).$$
Therefore we have
$$\lim_{N\to\infty} \sup_{\substack{\sqrt{I}=\m \\ \length(R/I)> N}} \left\{\frac{\eh(I)}{\length(R/I)} \right\}\geq \lim_{N\to\infty}\frac{\eh(I_N)}{\length(R/I_N)}\geq \lim_{N\to\infty}\frac{l\cdot N\eh(R/P_i)}{\length(R/(P_i+\m^N))}=l.$$
Finally, if $\eh(\red{R})>1$, then either $n\geq 2$ or $\eh(R/P_i)>1$, so in either case we have $l<\eh(R)$. Thus the last assertion follows.
\end{proof}

\begin{remark}
\begin{enumerate}
\item In the case $d\geq 2$, the conclusion of \autoref{cor.Mainequalchar} is stronger than what \autoref{conj.asymptoticLech} (b) predicts as it shows that $\frac{\eh(I)}{d!\length(R/I)}$ is uniformly bounded away from $\eh(R)$ for {\it any} $\m$-primary ideal, while \autoref{conj.asymptoticLech} (b) only expects this for sufficiently deep ideals. However, we point out that these are actually equivalent statements. Suppose one knows $\frac{\eh(I)}{d!\length(R/I)}\leq \eh(R)-\varepsilon$ for any $\m$-primary ideal $I$ with $\length(R/I)>N$. Then since $\{\frac{\eh(I)}{d! \length (R/I)}\}_{\length(R/I)\leq N}$ is a finite set of rational numbers with a bounded denominator and each is strictly less that $\eh(R)$ by the non-sharpness of Lech's inequality in dimension $\geq 2$ (see \cite[page 74, after (4.1)]{LechMultiplicity}),  we know there exists $\varepsilon'$ such that $\frac{\eh(I)}{d!\length(R/I)}\leq \eh(R)-\varepsilon'$ for any $\m$-primary ideal $I$.
\item If $R$ has characteristic $p>0$ and $\dim R=2$, the method we used in the proof of \autoref{thm.Mainequalchar} and \autoref{cor.Mainequalchar} can be adapted to prove \autoref{conj.asymptoticLecheHK} if $\dim R=2$. We omit the details and leave this to the interested reader. Note that, however, one cannot expect to use the same strategy to prove \autoref{conj.asymptoticLecheHK} in the higher dimensional cases. Because it is not true in general that we can find an element $x\in R$ such that $\ehk(R)=\ehk(R/xR)$. For example if we let $R_d = K[[x_1, \ldots, x_d]]/(x_1^2 + \cdots +x_d^2)$, then $R/xR$ is isomorphic $R_{d-1}$ for a linear form $x$, but $\ehk (R_{3}) > \ehk (R_4)$, see \cite[\S 4]{WatanabeYoshidaHKdimThree}.
\end{enumerate}
\end{remark}

As explained in \cite[Remark 3.4]{DaoSmirnov}, an improvement in Lech's inequality gives an improved Lech-type inequality
on the number of generators of an integrally closed ideal.

\begin{corollary}
\label{cor.numgen}
Let $(R, \mf m)$ be a Noetherian local ring of dimension $d$ with infinite residue field, $x$ denote a general element in $\mf m$,
and $I$ be an $\mf m$-primary integrally closed ideal.
\begin{enumerate}
\item If $R$ is equicharacteristic, $d \geq 3$, and $\eh(\red{\widehat{R}})>1$,
then there exists $\varepsilon > 0$ independent of $I$ such that
\[
\eh(IR/xR) \leq (d-1)!(\eh(R) - \varepsilon) (\mu(I) -d + 1).
\]
\item If $\widehat{R}$ has an isolated singularity of characteristic $p > 0$, $d \geq 2$, and $R/\mf m$ is a perfect field,
then for every $\varepsilon>0$, there exists $N\gg0$ such that if $\mu(R/I)>N$ then
\[
\eh(IR/xR) \leq (d-1)!(1+\varepsilon)(\mu(I) -d + 1).
\]
\end{enumerate}
\end{corollary}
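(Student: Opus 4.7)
The plan is to apply the improved Lech-type inequalities of the preceding sections to the hyperplane section $R/xR$ acting on $IR/xR$ for a sufficiently general $x\in\m$, and then to translate the colength $\length(R/(I,x))$ into $\mu(I)-d+1$ via a classical identity for $\m$-full (equivalently, for integrally closed) ideals. This is the template described in \cite[Remark 3.4]{DaoSmirnov}.

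The key classical ingredient is the following identity: for an integrally closed $\m$-primary ideal $I$ and a sufficiently general $x\in\m$,
\[
\mu(I) = \mu(IR/xR) + \length(R/(I,x)).
\]
Since $R$ contains a field with infinite residue field in both parts of the corollary, Watanabe's theorem guarantees that $I$ is $\m$-full with respect to a sufficiently general $x\in\m$. The identity then follows from computing the kernel of the surjection $I/\m I \twoheadrightarrow (IR/xR)/\m(IR/xR)$, which by modularity and $\m$-fullness is isomorphic to $(I:x)/I$, together with the exact sequence $0\to R/(I:x)\xrightarrow{x} R/I\to R/(I,x)\to 0$ identifying $\length((I:x)/I)=\length(R/(I,x))$. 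Since $\mu(IR/xR)\geq\dim(R/xR)=d-1$, this yields
\[
\length(R/(I,x)) \leq \mu(I)-d+1.
\]

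For part (1), choose $x$ general so that $R/xR$ is equicharacteristic of dimension $d-1\geq 2$, has $\eh(R/xR)=\eh(R)$ (by superficiality of $x$, using $d\geq 2$), inherits $\eh(\red{\widehat{R/xR}})>1$ from $R$, and so that $I$ is $\m$-full with respect to $x$. Applying \autoref{cor.Mainequalchar} in $R/xR$ produces a uniform $\varepsilon>0$, depending only on $R$, with
\[
\eh(IR/xR)\leq(d-1)!(\eh(R)-\varepsilon)\length(R/(I,x))\leq(d-1)!(\eh(R)-\varepsilon)(\mu(I)-d+1).
\]
For part (2), choose $x$ general so that $R/xR$ also has an isolated singularity (via a Flenner-type argument along the lines of \autoref{lem.Flenner}), has dimension $d-1\geq 1$, characteristic $p>0$, and perfect residue field, and so that $I$ is $\m$-full with respect to $x$. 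By \autoref{cor.mainIsolatedSing} applied to $R/xR$, for every $\varepsilon>0$ there exists $N'$ such that $\length(R/(I,x))>N'$ implies $\eh(IR/xR)\leq(d-1)!(1+\varepsilon)\length(R/(I,x))$. If instead $\length(R/(I,x))\leq N'$, the classical Lech inequality bounds $\eh(IR/xR)\leq(d-1)!\eh(R)N'$, which is absorbed into $(d-1)!(1+\varepsilon)(\mu(I)-d+1)$ once $\mu(I)$ exceeds an appropriate threshold $N$. Combining with the classical inequality above gives the claim.

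The principal obstacles are (i) carefully verifying the classical identity and ensuring that the various genericity requirements on $x$ (superficiality, inheritance of $\eh(\red{\widehat{-}})>1$ or of isolated singularity by $R/xR$, and $\m$-fullness of $I$ with respect to $x$) can be realized simultaneously by a single general choice; and (ii) correctly interpreting the hypothesis $\mu(R/I)>N$ in part (2) (presumably a typo for $\mu(I)>N$ or $\length(R/I)>N$) and ensuring that it forces $\length(R/(I,x))$ large enough to invoke \autoref{cor.mainIsolatedSing} or else that $\mu(I)-d+1$ dominates the trivial Lech bound.
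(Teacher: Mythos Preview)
Your proposal is correct and follows the same route as the paper, which simply invokes \cite[Theorem~3.1]{DaoSmirnov} together with Watanabe's identity $\mu(I)=\mu(IR/xR)+\length(R/(I,x))$ and, for part~(2), notes that $R/xR$ is again an isolated singularity via the argument of \autoref{lem.Flenner}. Your write-up is in fact more careful on two points: for part~(2) the paper compresses your case-split into the single assertion $\mu(I)\leq 2\length(R/(I,x))$ (which is how one passes from ``$\mu(I)$ large'' to ``$\length(R/(I,x))$ large'', though as stated it fails for shallow ideals like $I=\m$), and for part~(1) the inheritance $\eh(\red{\widehat{R/xR}})>1$ that you flag is indeed handled by \autoref{lem.Flenner}: since $\red{\widehat{R}}$ satisfies $(R_0)$ and $d\geq 3$, a general $x$ preserves $(R_0)$, so $\red{(\widehat{R}/x\widehat{R})}$ and $\red{\widehat{R}}/x\red{\widehat{R}}$ have the same multiplicity, equal to $\eh(\red{\widehat{R}})>1$ by superficiality. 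Your reading of ``$\mu(R/I)>N$'' as a typo for ``$\mu(I)>N$'' is the intended interpretation.
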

\begin{proof}
The proof of both results follows from the same argument as in \cite[Theorem 3.1]{DaoSmirnov}, which is based on Lech's inequality in $R/xR$ and
a formula of Watanabe: $\mu(I) = \mu (IR/xR) + \length (R/(I,x))$ (see \cite{WatanabeJ}).
For the second assertion we observe that $R/xR$ is still an isolated singularity by the proof of Lemma~\ref{lem.Flenner}
and that $\mu(I) \leq 2 \length (R/(I,x))$ by Watanabe's formula.
\end{proof}

\bibliographystyle{plain}
\bibliography{refs}

\begin{thebibliography}{10}

\bibitem{BlumLiu}
Harold Blum and Yuchen Liu.
\newblock The normalized volume of a singularity is lower semicontinuous.
\newblock Preprint avalable at https://arxiv.org/abs/1802.09658.

\bibitem{CDHZ}
Olgur Celikbas, Hailong Dao, Craig Huneke, and Yi~Zhang.
\newblock Bounds on the {H}ilbert-{K}unz multiplicity.
\newblock {\em Nagoya Math. J.}, 205:149--165, 2012.

\bibitem{DaoSmirnov}
Hailong Dao and Ilya Smirnov.
\newblock The multiplicity and the number of generators of an integrally closed
  ideal.
\newblock {\em J. Singularities}.
\newblock to appear.

\bibitem{Flenner}
Hubert Flenner.
\newblock Die {S}\"{a}tze von {B}ertini f\"{u}r lokale {R}inge.
\newblock {\em Math. Ann.}, 229(2):97--111, 1977.

\bibitem{Goto}
Shiro Goto.
\newblock Integral closedness of complete-intersection ideals.
\newblock {\em J. Algebra}, 108(1):151--160, 1987.

\bibitem{Hanes}
Douglas Hanes.
\newblock Notes on the {H}ilbert-{K}unz function.
\newblock {\em J. Algebra}, 265(2):619--630, 2003.

\bibitem{HochsterLipmanSathaye}
Melvin Hochster.
\newblock Presentation depth and the {L}ipman-{S}athaye {J}acobian theorem.
\newblock {\em Homology Homotopy Appl.}, 4(2, part 2):295--314, 2002.
\newblock The Roos Festschrift volume, 2.

\bibitem{HochsterHuneke1}
Melvin Hochster and Craig Huneke.
\newblock Tight closure, invariant theory, and the {B}rian\c con-{S}koda
  theorem.
\newblock {\em J. Amer. Math. Soc.}, 3(1):31--116, 1990.

\bibitem{HochsterHuneke2}
Melvin Hochster and Craig Huneke.
\newblock {$F$}-regularity, test elements, and smooth base change.
\newblock {\em Trans. Amer. Math. Soc.}, 346(1):1--62, 1994.

\bibitem{HSV}
Craig Huneke, Ilya Smirnov, and Javid Validashti.
\newblock A generalization of an inequality of {L}ech relating multiplicity and
  colength.
\newblock {\em Comm. Algebra}, 47(6):2436--2449, 2019.

\bibitem{HunekeSwanson}
Craig Huneke and Irena Swanson.
\newblock {\em Integral closure of ideals, rings, and modules}, volume 336 of
  {\em London Mathematical Society Lecture Note Series}.
\newblock Cambridge University Press, Cambridge, 2006.

\bibitem{CohenGabberOriginal}
Luc Illusie, Yves Laszlo, and Fabrice Orgogozo, editors.
\newblock {\em Travaux de {G}abber sur l'uniformisation locale et la
  cohomologie \'{e}tale des sch\'{e}mas quasi-excellents}.
\newblock Soci\'{e}t\'{e} Math\'{e}matique de France, Paris, 2014.
\newblock S\'{e}minaire \`a l'\'{E}cole Polytechnique 2006--2008. [Seminar of
  the Polytechnic School 2006--2008], With the collaboration of
  Fr\'{e}d\'{e}ric D\'{e}glise, Alban Moreau, Vincent Pilloni, Michel Raynaud,
  Jo\"{e}l Riou, Beno\^{i}t Stroh, Michael Temkin and Weizhe Zheng,
  Ast\'{e}risque No. 363-364 (2014) (2014).

\bibitem{Klein}
Patricia Klein.
\newblock Characterizing finite length local cohomology in terms of bounds on
  {K}oszul cohomology.
\newblock Preprint available at https://arxiv.org/abs/1810.01359.

\bibitem{KMQSY}
Patricia Klein, Linquan Ma, Pham~Hung Quy, Ilya Smirnov, and Yongwei Yao.
\newblock Lech's inequality, the {S}t\"{u}ckrad-{V}ogel conjecture, and uniform
  behavior of {K}oszul homology.
\newblock {\em Adv. Math.}, 347:442--472, 2019.

\bibitem{Kunz1}
Ernst Kunz.
\newblock Characterizations of regular local rings of characteristic {$p$}.
\newblock {\em Amer. J. Math.}, 91:772--784, 1969.

\bibitem{Kunz2}
Ernst Kunz.
\newblock On {N}oetherian rings of characteristic {$p$}.
\newblock {\em Amer. J. Math.}, 98(4):999--1013, 1976.

\bibitem{KuranoShimomoto}
Kazuhiko Kurano and Kazuma Shimomoto.
\newblock An elementary proof of {C}ohen-{G}abber theorem in the equal
  characteristic {$p>0$} case.
\newblock {\em Tohoku Math. J. (2)}, 70(3):377--389, 2018.

\bibitem{Lech2}
Christer Lech.
\newblock On the associativity formula for multiplicities.
\newblock {\em Ark. Mat.}, 3:301--314, 1957.

\bibitem{LechMultiplicity}
Christer Lech.
\newblock Note on multiplicities of ideals.
\newblock {\em Ark. Mat.}, 4:63--86 (1960), 1960.

\bibitem{LipmanSathaye}
Joseph Lipman and Avinash Sathaye.
\newblock Jacobian ideals and a theorem of {B}rian\c{c}on-{S}koda.
\newblock {\em Michigan Math. J.}, 28(2):199--222, 1981.

\bibitem{MaLech}
Linquan Ma.
\newblock Lech's conjecture in dimension three.
\newblock {\em Adv. Math.}, 322:940--970, 2017.

\bibitem{Monsky}
Paul Monsky.
\newblock The {H}ilbert-{K}unz function.
\newblock {\em Math. Ann.}, 263(1):43--49, 1983.

\bibitem{MumfordStability}
David Mumford.
\newblock Stability of projective varieties.
\newblock {\em Enseignement Math. (2)}, 23(1-2):39--110, 1977.

\bibitem{Ratliff2}
L.~J. Ratliff, Jr.
\newblock Notes on essentially powers filtrations.
\newblock {\em Michigan Math. J.}, 26(3):313--324, 1979.

\bibitem{Vasconcelos}
Wolmer~V. Vasconcelos.
\newblock The homological degree of a module.
\newblock {\em Trans. Amer. Math. Soc.}, 350(3):1167--1179, 1998.

\bibitem{WangFittingIdeals}
Hsin-Ju Wang.
\newblock On the {F}itting ideals in free resolutions.
\newblock {\em Michigan Math. J.}, 41(3):587--608, 1994.

\bibitem{WatanabeJ}
Junzo Watanabe.
\newblock {${\mf m}$}-full ideals.
\newblock {\em Nagoya Math. J.}, 106:101--111, 1987.

\bibitem{WatanabeChain}
Kei-ichi Watanabe.
\newblock Chains of integrally closed ideals.
\newblock In {\em Commutative algebra ({G}renoble/{L}yon, 2001)}, volume 331 of
  {\em Contemp. Math.}, pages 353--358. Amer. Math. Soc., Providence, RI, 2003.

\bibitem{WatanabeYoshida}
Kei-ichi Watanabe and Ken-ichi Yoshida.
\newblock Hilbert-{K}unz multiplicity and an inequality between multiplicity
  and colength.
\newblock {\em J. Algebra}, 230(1):295--317, 2000.

\bibitem{WatanabeYoshidaHKdimTwo}
Kei-Ichi Watanabe and Ken-Ichi Yoshida.
\newblock Hilbert-{K}unz multiplicity of two-dimensional local rings.
\newblock {\em Nagoya Math. J.}, 162:87--110, 2001.

\bibitem{WatanabeYoshidaHKdimThree}
Kei-ichi Watanabe and Ken-ichi Yoshida.
\newblock Hilbert-{K}unz multiplicity of three-dimensional local rings.
\newblock {\em Nagoya Math. J.}, 177:47--75, 2005.

\end{thebibliography}

\end{document}